\newtheorem{theorem}{Theorem}[section]
\newtheorem{lemma}[theorem]{Lemma}
\newtheorem{proposition}[theorem]{Proposition}
\theoremstyle{definition}
\theoremstyle{remark}
\newcommand{\fe}{\mathrm{e}}
\newcommand{\eps}{\varepsilon}
\newcommand{\bR}{{\mathbb R}}
\newcommand{\bT}{{\mathbb T}}
\newcommand{\bN}{{\mathbb N}}
\newcommand{\bZ}{{\mathbb Z}}
\newcommand{\bU}{{\bf U}}
\newcommand{\bv}{{\bf v}}
\newcommand{\bx}{\mathbf{x}}
\newcommand{\by}{{\mathbf y}}
\newcommand{\bh}{{\mathbf h}}
\newcommand{\br}{{\mathbf r}}
\newcommand{\bE}{\mathbf{E}}
\newcommand{\be}{\mathbf{e}}
\numberwithin{equation}{section}
\begin{document}

\title[Vlasov equations with non-homogeneous magnetic field]{Uniformly accurate methods for Vlasov equations with non-homogeneous strong magnetic field}


\author[Ph. Chartier]{Philippe Chartier}
\address{\hspace*{-12pt}Ph.~Chartier: Univ Rennes, CNRS, IRMAR-UMR 6625, F-35000 Rennes, France}
\email{Philippe.Chartier@inria.fr}

\author[N. Crouseilles]{Nicolas Crouseilles}
\address{\hspace*{-12pt}N.~Crouseilles: Univ Rennes, CNRS, IRMAR-UMR 6625, F-35000 Rennes, France}
\email{nicolas.crouseilles@inria.fr}

\author[M. Lemou]{Mohammed Lemou}
\address{\hspace*{-12pt}M.~Lemou: Univ Rennes, CNRS, IRMAR-UMR 6625, F-35000 Rennes, France}
\email{mohammed.lemou@univ-rennes1.fr}

\author[F. M\'{e}hats]{Florian M\'{e}hats}
\address{\hspace*{-12pt}F.~M\'{e}hats: Univ Rennes, CNRS, IRMAR-UMR 6625, F-35000 Rennes, France}
\email{florian.mehats@univ-rennes1.fr}

\author[X. Zhao]{Xiaofei Zhao}
\address{\hspace*{-12pt}X.~Zhao: Univ Rennes, CNRS, IRMAR-UMR 6625, F-35000 Rennes, France}
\email{zhxfnus@gmail.com}

\subjclass[2010]{Primary }

\keywords{}

\date{}

\dedicatory{}

\begin{abstract}
In this paper, we consider the numerical solution of highly-oscillatory Vlasov and Vlasov-Poisson equations with non-homogeneous magnetic field. Designed in the spirit of recent {\em uniformly accurate} methods, our schemes remain insensitive to the stiffness of the problem, in terms of both accuracy and computational cost. The specific difficulty (and the resulting novelty of our approach) stems from the presence of a non-periodic oscillation, which necessitates a careful ad-hoc reformulation of the equations. Our results are illustrated numerically on several examples. \\ \\
{\bf Keywords:} Vlasov and Vlasov-Poisson equations, non-homogeneous strong magnetic field, high oscillations, uniform accuracy, two-scale methods. \\ \\
{\bf AMS Subject Classification:} 65L05, 65L20, 65L70.
\end{abstract}

\maketitle

\section{Introduction}
In this article, we are concerned with the numerical solution of the $2$-dimensional Vlasov equation with non-homogeneous magnetic field \cite{bostan,bostan_finot,degond,filbet,lee}. More specifically, if $b: x \in \bR^2 \mapsto b(x) \in \bR$ and $f_0: (\bx, \bv) \in \bR^2 \times \bR^2 \mapsto f_0(\bx, \bv) \in \bR$ are given functions, and if $0<\eps\leq1$ denotes a dimensionless parameter and $]0,T]$ a non-empty interval of time, we shall consider the Cauchy problem for the distribution function $f^\eps: (t,\bx, \bv) \in [0,T] \times \bR^2 \times \bR^2 \mapsto f^\eps(t,\bx, \bv) \in \bR$ given by
\begin{subequations}\label{eq:1}
\begin{align}
&\partial_t f^\eps(t,\bx, \bv) +\bv\cdot\nabla_\bx f^\eps(t,\bx, \bv)+\left(\bE^\eps(t,\bx)+\frac{b(\bx)}{\eps}J\bv\right)\cdot\nabla_\bv f^\eps(t,\bx, \bv)=0,\label{eq:vp1}\\
&f^\eps(0,\bx,\bv)=f_0(\bx,\bv),
\end{align}
\end{subequations}
where
\begin{enumerate}
\item[(i)] the unidirectional magnetic field $B: x \in \bR^2 \mapsto B(\bx)= (0, 0, b(\bx)) \in \bR^3$ induces a Lorentz force $\bv\times B(\bx)$ which in the two-dimensional context simply becomes $b(\bx)J\bv$ with
\begin{equation}
\label{Jmatrix}
J=
\begin{pmatrix}
0 & 1 \\
-1&0
\end{pmatrix};
\end{equation}
\item[(ii)] the electric-field function $\bE^\eps: (t,\bx) \in \bR^+\times\bR^2 \mapsto \bE^\eps(t,\bx) \in \bR^2$ is either external and explicitly given or self-consistent. In the latter case, $\bE^\eps$ solves the Poisson equation
\begin{equation}
\label{poisson}
\nabla_\bx\cdot \bE^\eps(t,\bx)=\int_{\bR^2}f^\eps(t,\bx,\bv)d\bv-n_i(\bx),
\end{equation}
where $n_i$ denotes the ion density of the background.
\end{enumerate}
\medskip

Solving equation \eqref{eq:1} with standard methods is notoriously difficult for vanishing values of the parameter $\eps$, as the Lorentz term then creates high-oscillations  in the solution: this indeed imposes to use tiny time-steps (usually of the order of $\eps$) and leads to formidable computational costs. Hence, it is now admitted that specific techniques are required that can cope with this particular regime of small values of $\eps$ and which, as logic dictates, preserve the asymptotics of $f^\eps$ in the limit where $\eps$ goes to zero. Numerical methods obeying to this paradigm  (i.e. consistent with the limit equation for $f^0$)
and that are consistent with \eqref{eq:1}
when $\eps=O(1)$ have been called {\em asymptotic preserving} methods and may be found in various publications \cite{Filbet_rod,Filbet_rod2,Frenod,Sonnendrucker2,Shi}.

Nevertheless, if the value of $\eps$ is not known prior to the simulation, it is often observed that the error behaviour of asymptotic-preserving methods is largely deteriorated for certain (not so small) values of  $\eps$. As a consequence, it appears highly desirable to design numerical methods for  \eqref{eq:1} which are {\em uniformly accurate (UA)}
with respect to the parameter $\eps\in ]0, 1]$. That is to say, $p^{th}$-order methods which, when used with time step $\Delta t$,  deliver approximate solutions $f^\eps_{\Delta t}$  such that
$$
\|f^\eps_{\Delta t} - f^\eps\| \leq C (\Delta t)^p
$$
in an appropriate function-norm, where the constant $C$ as well as the computational cost are independent of $\eps$.

It is precisely the aim of this paper to introduce UA schemes for equation \eqref{eq:1}, which,  as we shall illustrate numerically, are indeed able to capture the various scales occurring in the system while keeping numerical parameters (in particular the time step)
independent of the degree of stiffness $\eps$. Although alternative options are possible \cite{NES,SNM,NUA,MRM,SHO}, the strategy we develop to reach this goal is very much inspired by the recent papers \cite{UAKG, UAVP4d}:
its main underlying idea consists in  separating explicitly
the two time scales naturally present in \eqref{eq:1}, namely the slow time $t$ and the fast time $t/\eps$. This is done at the level of the characteristic equations (resulting from the use of the Particle-In-Cell method, see e.g. \cite{Birdsall,SonnendruckerBook,Hesthaven}), which are, for each macro-particles,
stiff ordinary differential equations of the form
\begin{align} \label{eq:chareq}
\left\{
\begin{array}{cclc}
\dot{\bx} &=& \bv, & \bx(0) = \bx_0  \\
\dot{\bv} &=& \frac{b(\bx)}{\eps} J \bv + \bE^\eps(t,\bx), & \bv(0) = \bv_0
\end{array}
\right.
\end{align}
where the term $\frac{b(\bx)}{\eps} J \bv$ is the source of high-oscillations and at the origin of numerical difficulties.  As compared to our previous works \cite{UAKG,NUA},  the main obstacle  we are confronted with (and accordingly the main novelty of the proposed solution) is the fact that the aforementioned oscillations are {\bf not per se  time-periodic} (see also \cite{AAT}). However, we will show that the trajectory in the physical space $\bx(t)$ remains confined within an $\eps$-neighbourhood of the initial condition $\bx_0$, allowing to regard $e^{\frac{b(\bx_0)}{\eps} t J}$ as the principal  oscillation occurring  in the solution.  {\em Filtering} it out and {\em rescaling} the time according to $s=b(\bx_0) t$, we obtain
\begin{eqnarray} \label{eq:chareqf}
\left\{
\begin{array}{cclc}
\dot{\tilde{\bx}} &=& \frac{1}{b(\bx_0)} \, \fe^{\frac{s}{\eps} J} \,  \tilde{\by}(s), & \tilde{\bx}(0) = \bx_0  \\
\dot{\tilde{\by}} &=&\frac{1}{\eps} \, \left(\frac{b(\tilde{\bx})}{b(\bx_0)}-1\right) \, J\tilde{\by}+
\frac{1}{b(\bx_0)} \, \fe^{-\frac{s}{\eps} J} \, \bE^\eps(\frac{s}{b(\bx_0)},\tilde{\bx}), & \tilde{\by}(0) = \bv_0
\end{array}
\right.,
\end{eqnarray}
a system in a form which is now amenable to the {\em embedding} of $\tilde{\bx}(s)$ and $\tilde{\by}(s)$ into the functions $X(s,\tau)$ and $Y(s,\tau)$ periodic with respect to $\tau$ and
such that $X(s,s/\eps)=\tilde{\bx}(s)$ and $Y(s,s/\eps) = \tilde{\by}(s)$. The resulting transport equations
\begin{eqnarray} \label{eq:chareqds}
\left\{
\begin{array}{cclc}
\partial_s X + \frac{1}{\eps} \partial_\tau X &=& \frac{1}{b(\bx_0)} \, \fe^{\tau J} \,  Y, & X(0,0) = \bx_0  \\
\partial_s Y + \frac{1}{\eps} \partial_\tau Y &=&\frac{1}{\eps} \, \left(\frac{b(X)}{b(\bx_0)}-1\right) \, J Y+
\frac{1}{b(\bx_0)} \, \fe^{-\tau J} \, \bE^\eps(\frac{s}{b(\bx_0)}, X), & Y(0,0) = \bv_0
\end{array}
\right.
\end{eqnarray}
then need to be complemented with initial conditions $X(0,\tau)$ and $Y(0,\tau)$. Their choice is the fundamental ingredient of the two-scale strategy proposed in \cite{UAKG,APVP2d} and it requires to be handled here with additional care owing to the presence of the $1/\eps$-term in the right-hand side of  \eqref{eq:chareqds}. Under this form, the problem shares
 similarities with the model analyzed in \cite{UAKG}. However, \eqref{eq:chareqds} contains
 two main additional difficulties due to the presence of the term $(b(X)/b(\bx_0) -1)JY$: first, this nonlinear term prevents from
 a direct application of Gronwall lemma; second, this term is not smooth with respect to the unknown.
 Finally, we will see that the numerical solution also enjoys at a discrete confinement property, i.e.
 it remains confined within an $\eps$-neighbourhood of the initial data.

The organisation of the paper follows closely the steps exposed above. The two-scale formulation of the characteristics is introduced in Section \ref{sect:ts}, which includes in particular Subsection \ref{sect:sf} devoted to the scaling and filtering operations  and Subsection \ref{sect:ic} devoted to the detailed derivation of the initial conditions of \eqref{eq:chareqds}. The section concludes with rigorous estimates of the derivatives of $X$ and $Y$ (see Subsection \ref{sect:re}) and a numerical confirmation of the expected smoothness of the solution is brought in Subsection \ref{sect:ni}. Section \ref{sect:nm} is concerned with the effective derivation of numerical schemes of first and second orders for solving equations \eqref{eq:chareqds} and the proof of their convergence (Theorem \ref{thm1}) which constitutes the main result of this paper. Within Subsection \ref{vpcase} the adaptation of our numerical strategy to the situation of a coupling of Vlasov equation with Poisson equation is considered: although no rigorous statement is established at this stage, we provide empirical evidence of the efficiency of our method in this case. Section \ref{sect:ne} describes several examples and the corresponding numerical experiments, confirming the interest of the technique.

\section{Two-scale formulation of the characteristics equations} \label{sect:ts}
Using the Particle-In-Cell (PIC) discretisation,
\begin{equation}\label{dirac}
f^\eps(t,\bx,\bv)\approx\sum_{k=1}^{N_p}\omega_k\delta(\bx-\bx_k(t))\delta(\bv-\bv_k(t)),\quad t\geq0,\ \bx,\bv\in\bR^2,
\end{equation}
we get the characteristic equation for $1\leq k\leq N_p$,
\begin{subequations}
\label{charact}
\begin{align}
   & \dot{\bx}_k(t)=\bv_k(t), \\
  &\dot{\bv}_k(t)=\bE^\eps(t,\bx_k(t))+\frac{b(\bx_k(t))}{\eps}\bv_k^\bot(t),\quad t>0, \\
   & \bx_k(0)=\bx_{k,0},\quad \bv_k(0)=\bv_{k,0}.
\end{align}
\end{subequations}
We see (\ref{charact}) is a solution dependent highly oscillatory problem. As a basic requirement throughout the paper, we consider the magnetic field function $b(\bx)$ is uniformly above zero, i.e. for some constant $c_0>0$,
\begin{equation}\label{positivity}
b(\bx)\geq c_0>0,\quad \forall \bx\in\bR^2.
\end{equation}

\subsection{Scaling of time and filtering} \label{sect:sf}
For each $k$, introduce the scaled time
\begin{equation}\label{scale time}
s_k={b_k} t,\quad b_k=b(\bx_k(0)),
\end{equation}
and define
$$\tilde{\bx}_k(s):=\bx_k(t),\quad \tilde{\bv}_k(s):=\bv_k(t),\quad s\geq0,$$
where we omit the subscript $k$ in $s_k$ for simplicity of notations.  Note that under the assumption (\ref{positivity}), $s_k=s_k(t)$ is a monotone increasing function, which is interpreted as a time for particle $k$.  Through this transformation, we make each particle  living in its own time.
Then we can rewrite the characteristics equation (\ref{charact}) as
\begin{subequations}\label{charact2}
\begin{align}
   & \dot{\tilde{\bx}}_k(s)=\frac{\tilde{\bv}_k(s)}{b_k}, \\
  &\dot{\tilde{\bv}}_k(s)=\frac{b(\tilde{\bx}_k(s))}{\eps b_k}J\tilde{\bv}_k(s)+\frac{\bE^\eps(s/b_k,\tilde{\bx}_k(s))}{b_k},\quad s>0, \\
   & \tilde{\bx}_k(0)=\bx_{k,0},\quad \tilde{\bv}_k(0)=\bv_{k,0}.
\end{align}
\end{subequations}
Next, we isolate the main oscillation term,
$$\dot{\tilde{\bv}}_k(s)=\frac{1}{\eps }J\tilde{\bv}_k(s)+\left(\frac{b(\tilde{\bx}_k)}{b_k}-1\right)\frac{J\tilde{\bv}_k(s)}{\eps}+\frac{\bE^\eps(s/b_k,\tilde{\bx}_k(s))}{b_k},\quad s>0,$$
and then filter it out by introducing
\begin{equation}\label{filter}
\tilde{\by}_k(s):=\fe^{-Js/\eps}\tilde{\bv}_k(s),\qquad s\geq0.
\end{equation}
Then (\ref{charact2}) becomes
\begin{subequations}\label{charact3}
\begin{align}
&\dot{\tilde{\bx}}_k(s)=\fe^{Js/\eps}\frac{\tilde{\by}_k(s)}{b_k}, \label{charact3a}\\
&\dot{\tilde{\by}}_k(s)=\left(\frac{b(\tilde{\bx}_k(s))}{b_k}-1\right)\frac{J\tilde{\by}_k(s)}{\eps}+
\fe^{-Js/\eps}\frac{\bE^\eps(s/b_k,\tilde{\bx}_k(s))}{b_k},\quad s>0, \label{charact3b}\\
   & \tilde{\bx}_k(0)=\bx_{k,0},\quad \tilde{\by}_k(0)=\bv_{k,0}.
\end{align}
\end{subequations}

We shall analyse and solve (\ref{charact3}) up to any fixed time $$S_k= T b_k>0.$$ For technical reasons, hereafter we shall assume that  the given electric field $\bE^\eps(t,\bx)$ and the magnetic field $b(\bx)$ are globally Lipschitz functions, i.e.
\begin{align}
&|\bE^\eps(t,\bx_1)-\bE^\eps(t,\bx_2)|\leq C_\bE|\bx_1-\bx_2|,\quad \forall \bx_1,\bx_2\in\bR^2,\ 0\leq t\leq T,\nonumber\\
&|b(\bx_1)-b(\bx_2)|\leq C_b|\bx_1-\bx_2|,\quad \forall \bx_1,\bx_2\in\bR^2,\label{Lips}
\end{align}
for two constants $C_\bE,C_b>0$ independent of $\eps$. Here and after,
the norm $|\cdot|$ of a vector always refers to the standard euclidian norm in $\mathbb{R}^2$, whereas it refers to the absolute value
when it is applied to a scalar quantity.

\begin{lemma}\label{lemma1}
Under assumption (\ref{Lips})  and $\bE^\eps(t,\bx)\in C([0,T],\bR^2),\,b(\bx)\in {\mathcal C}^1(\bR^2)$, for the solution of (\ref{charact3}), we have that for each $1\leq k\leq N_p$,
\begin{equation}
|\tilde{\bx}_k(s)-\bx_{k,0}|\leq C_1\eps, \quad |\tilde{\by}_k(s)|\leq C_2,\quad 0\leq s\leq S_k,
\end{equation}
for some $C_1,C_2>0$. Hence,
\begin{equation}\label{lemma result}
\frac{1}{\eps}\left|\frac{b(\tilde{\bx}_k(s))}{b_k}-1\right|\leq C_3,\quad 0\leq s\leq S_k,
\end{equation}
for some $C_3>0$. The constants $C_1, C_2, C_3$ depend on $k$ but are independent of $\varepsilon$.
\end{lemma}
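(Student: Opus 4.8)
The plan is to exploit two structural features of system (\ref{charact3}): the antisymmetry of $J$, which removes the stiff term from the energy balance of $\tilde{\by}_k$, and the oscillatory factor $\fe^{J\sigma/\eps}$ in (\ref{charact3a}), whose primitive carries a factor $\eps$ and thereby compensates the $1/\eps$ singularity present in $\dot{\tilde{\by}}_k$. Throughout I will use that $\fe^{Js/\eps}$ and $J^{-1}$ are orthogonal, hence norm-preserving, that $\bu\cdot J\bu=0$ for every $\bu\in\bR^2$, and the Lipschitz bound $|\bE^\eps(t,\tilde{\bx}_k)|\le \bar E_k+C_\bE|\tilde{\bx}_k-\bx_{k,0}|$ coming from (\ref{Lips}), where $\bar E_k:=\sup_{0\le t\le T}|\bE^\eps(t,\bx_{k,0})|$ is finite and, as the standing hypotheses are meant to ensure, bounded uniformly in $\eps$.

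First I would derive an energy estimate for $\tilde{\by}_k$. Differentiating $|\tilde{\by}_k|^2$ and using (\ref{charact3b}), the contribution of the stiff term is proportional to $\tilde{\by}_k\cdot J\tilde{\by}_k=0$ and therefore drops out, leaving $\frac{d}{ds}|\tilde{\by}_k|\le b_k^{-1}|\bE^\eps(s/b_k,\tilde{\bx}_k)|$; this is precisely what circumvents the obstruction to a direct Gronwall argument flagged in the introduction. Coupling this inequality with the crude bound $|\tilde{\bx}_k(s)-\bx_{k,0}|\le b_k^{-1}\int_0^s|\tilde{\by}_k|$ obtained by integrating (\ref{charact3a}) trivially, and applying Gronwall's lemma to the sum $|\tilde{\bx}_k-\bx_{k,0}|+|\tilde{\by}_k|$, I obtain $\eps$-independent $O(1)$ bounds on both quantities on $[0,S_k]$; in particular this already yields the second assertion $|\tilde{\by}_k(s)|\le C_2$.

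The heart of the proof is the refined $O(\eps)$ confinement estimate. Here I would integrate (\ref{charact3a}) and integrate by parts using $\fe^{J\sigma/\eps}=\eps\frac{d}{d\sigma}\bigl(J^{-1}\fe^{J\sigma/\eps}\bigr)$, which gives
\begin{equation*}
\tilde{\bx}_k(s)-\bx_{k,0}=\frac{\eps}{b_k}J^{-1}\bigl(\fe^{Js/\eps}\tilde{\by}_k(s)-\tilde{\by}_k(0)\bigr)-\frac{\eps}{b_k}\int_0^s J^{-1}\fe^{J\sigma/\eps}\dot{\tilde{\by}}_k(\sigma)\,d\sigma.
\end{equation*}
The boundary term is $O(\eps)$ thanks to the $O(1)$ bound on $|\tilde{\by}_k|$ just established. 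In the integral term I replace $\dot{\tilde{\by}}_k$ by the right-hand side of (\ref{charact3b}); the dangerous contribution $\frac1\eps\bigl(\frac{b(\tilde{\bx}_k)}{b_k}-1\bigr)J\tilde{\by}_k$ is controlled by $\frac{C_b}{\eps b_k}|\tilde{\bx}_k-\bx_{k,0}|\,|\tilde{\by}_k|$ via the Lipschitz bound on $b$, and the external factor $\eps$ cancels the $1/\eps$. Using once more the $O(1)$ bound on $|\tilde{\by}_k|$, this term is bounded by a constant times $\int_0^s|\tilde{\bx}_k-\bx_{k,0}|\,d\sigma$, while the electric part contributes an $O(\eps)$ term. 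Thus $|\tilde{\bx}_k(s)-\bx_{k,0}|\le A\eps+D\int_0^s|\tilde{\bx}_k-\bx_{k,0}|\,d\sigma$ for $\eps$-independent constants $A,D$, and Gronwall's lemma yields $|\tilde{\bx}_k(s)-\bx_{k,0}|\le C_1\eps$, the first assertion.

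Finally, (\ref{lemma result}) follows immediately: by the Lipschitz continuity of $b$ and $b_k=b(\bx_{k,0})\ge c_0$,
\begin{equation*}
\frac1\eps\left|\frac{b(\tilde{\bx}_k(s))}{b_k}-1\right|=\frac{1}{\eps b_k}\bigl|b(\tilde{\bx}_k(s))-b(\bx_{k,0})\bigr|\le\frac{C_b}{\eps c_0}|\tilde{\bx}_k(s)-\bx_{k,0}|\le\frac{C_b C_1}{c_0}=:C_3.
\end{equation*}
The main obstacle, as anticipated in the introduction, is the circularity whereby the confinement of $\tilde{\bx}_k$ requires controlling $\dot{\tilde{\by}}_k$, which itself contains the stiff $1/\eps$ term that (\ref{lemma result}) is meant to bound; the integration by parts above is exactly what breaks this circularity, turning $\eps\cdot\frac1\eps$ into a harmless $O(1)$ Gronwall kernel.
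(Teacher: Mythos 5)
Your proposal is correct and follows essentially the same route as the paper's own proof: a Gronwall energy estimate exploiting $\tilde{\by}_k\cdot J\tilde{\by}_k=0$ for the $O(1)$ bounds, then Duhamel plus integration by parts on the oscillatory factor $\fe^{J\sigma/\eps}$ (whose primitive carries the factor $\eps$) to absorb the stiff term into a Gronwall kernel and obtain the $O(\eps)$ confinement, and finally the Lipschitz bound on $b$ for (\ref{lemma result}). The only differences are cosmetic (writing $J^{-1}$ instead of $-J$, and invoking Lipschitz continuity directly rather than the gradient representation of $b(\tilde{\bx}_k)-b_k$ at the last step).
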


\begin{proof}
Based on the assumption for $E^\eps(t,\bx)$ and $b(\bx)$, we have the global well-posedness of (\ref{charact3}) and $\tilde{\bx}_k(s),\tilde{\by}_k(s)\in C(\bR^+)$. Indeed, taking the inner product on both sides of (\ref{charact3a}) and (\ref{charact3b}) with $\tilde{\bx}_k(s)$ and $\tilde{\by}_k(s)$ and applying Cauchy-Schwarz inequality respectively gives
\begin{align*}
&\frac{d}{ds}|\tilde{\bx}_k(s)|^2\leq \frac{2}{b_k}|\tilde{\bx}_k(s)| \; |\tilde{\by}_k(s)|,\quad \frac{d}{ds}|\tilde{\by}_k(s)|^2\leq \frac{2}{b_k}|\bE^\eps(s/b_k,\tilde{\bx}_k(s))| \; |\tilde{\by}_k(s)|.
\end{align*}
Hence we find
\begin{align*}
\frac{d}{ds}|\tilde{\bx}_k(s)|\leq &\frac{2}{b_k}|\tilde{\by}_k(s)|,\\
 \frac{d}{ds}|\tilde{\by}_k(s)|\leq &\frac{2}{b_k}|\bE^\eps(s/b_k,\tilde{\bx}_k(s))|
\leq\frac{2}{b_k}\left(|\bE^\eps(s/b_k,\bx_{k,0})|+C_\bE|\tilde{\bx}_k(s)-\bx_{k,0}|\right)\\
\leq&\frac{2}{b_k}\|\bE^\eps(\cdot,\bx_{k,0})\|_{L^\infty(0,T)}+\frac{2C_\bE}{b_k}|\bx_{k,0}|
+\frac{2C_\bE}{b_k}|\tilde{\bx}_k(s)|.
\end{align*}
Then, adding the two last inequalities and by Gronwall's inequality,
one can get an a priori estimate for boundedness of the solution, i.e.
$$|\tilde{\bx}_k(s)|+|\tilde{\by}_k(s)|\leq C_2,\quad 0\leq s\leq S_k,$$
where
$C_2=(2/b_k) \left(\|\bE^\eps(\cdot,\bx_{k,0})\|_{L^\infty(0,T)}+C_\bE|\bx_{k,0}|\right)\exp(2T\max\{1,C_\bE\})$.

By applying the Duhamel's principle to (\ref{charact3a}) and then integrating by parts, we have
\begin{align*}
\tilde{\bx}_k(s)&=\tilde{\bx}_k(0)+\int_0^s\fe^{J\theta/\eps}\frac{\tilde{\by}_k(\theta)}{b_k}d\theta\\
&=\tilde{\bx}_k(0)-\frac{J\eps}{b_k}\left(\fe^{Js/\eps}\tilde{\by}_k(s)-\tilde{\by}_k(0)\right)+
\eps J\int_0^s\fe^{J\theta/\eps}\frac{\dot{\tilde{\by}}_k(\theta)}{b_k}d\theta.
\end{align*}
Hence from (\ref{charact3b}), we have
\begin{align*}
\tilde{\bx}_k(s)=&\tilde{\bx}_k(0)-\frac{J\eps}{b_k}\left(\fe^{Js/\eps}\tilde{\by}_k(s)-\tilde{\by}_k(0)\right)\\
&+
\eps J\int_0^s\frac{\fe^{J\theta/\eps}}{b_k}\left[\left(\frac{b(\tilde{\bx}_k(\theta))}{b_k}-1\right)\frac{J\tilde{\by}_k(\theta)}{\eps}+
\fe^{-J\theta/\eps}\frac{\bE^\eps(\theta/b_k,\tilde{\bx}_k(\theta))}{b_k}\right]d\theta.
\end{align*}
Then we can see for all $0\leq s\leq S_k$,
\begin{align*}
\frac{1}{\eps}\left|\tilde{\bx}_k(s)-\bx_{k,0}\right|\leq&
\frac{1}{b_k}\left(C_2+|\by_{k,0}|\right)+T\|\bE^\eps\|_\infty
+\frac{C_2C_b}{b_k^2}\int_0^s\frac{1}{\eps}\left|\tilde{\bx}_k(\theta)-\bx_{k,0}\right|
d\theta,
\end{align*}
where $$\|\bE^\eps\|_\infty:=\sup\{|\bE^\eps(t,\bx)|: 0\leq t\leq T,\, |\bx|\leq C_2\}.$$
By Gronwall's inequality, we get estimate
$$\frac{1}{\eps}\left|\tilde{\bx}_k(s)-\bx_{k,0}\right|\leq C_1,\quad \forall 0\leq s\leq S_k,$$
for a constant $C_1>0$ independent of $\eps$.
The last assertion (\ref{lemma result}) then follows from
$$b(\tilde{\bx}_k(s))-b_k=\int_0^s\nabla_\bx b\left(\theta \tilde{\bx}_k(s)+(1-\theta)\tilde{\bx}_k(0)\right)d\theta\cdot(\tilde{\bx}_k(s)-\tilde{\bx}_k(0)).$$
\end{proof}

Thanks to Lemma \ref{lemma1}, we observe that the right-hand-side of (\ref{charact3}) is bounded as $\eps\to0$. If we consider $\bE^\eps(t,\bx)$ is a given external field and contains no fast frequency in the $t$-variable, the formulation (\ref{charact3}) fits the requirement of the two-scale strategy.

\subsection{Two-scale formulation}
Let us first of all consider the case that $\bE^\eps(t,\bx)$ is a given external field without any fast frequencies in the $t$-variable. We shall address the Vlasov-Poisson case later.

Now we perform the two-scale formulation on (\ref{charact3}). Denote the fast variable $\tau=s/\eps$ and separate it out in (\ref{charact3}), then we have
\begin{subequations}\label{charact 2scale}
\begin{align}
&\partial_s X_k+\frac{1}{\eps}\partial_\tau X_k=\frac{\fe^{\tau J}}{b_k}Y_k, \label{charact 2scale a}\\
&\partial_s Y_k+\frac{1}{\eps}\partial_\tau Y_k=\frac{1}{\eps}\left(\frac{b(X_k)}{b_k}-1\right)JY_k+
\fe^{-J\tau}\frac{\bE^\eps(s/b_k,X_k)}{b_k},\quad s>0, \ \tau\in\bT,\label{charact 2scale b}
\end{align}
\end{subequations}
where $X_k=X_k(s,\tau),\,Y_k=Y_k(s,\tau)$ and $\bT=\bR/(2\pi)$ is a torus.
Choosing $X_k(0,0)=\bx_k(0),\,Y_k(0,0)=\bv_k(0)$, we recover the original
unknown by considering the two-scale unknown on the diagonal $\tau=s/\eps$
\begin{equation}\label{diagonal}
X_k(s,s/\eps)=\tilde{\bx}_k(s),\quad Y_k(s,s/\eps)=\tilde{\by}_k(s),\quad s\geq0.
\end{equation}
In the next section, numerical schemes will be proposed for the two-scale system \eqref{charact 2scale}, and our aim will be to prove that these schemes enjoy  uniform accuracy with respect to $\eps$. This property requires a preliminary analysis.
Indeed, one can observe that no initial condition for \eqref{charact 2scale} is evident since only the condition $X_k(0,0)=\bx_k(0),\,Y_k(0,0)=\bv_k(0)$ is required.
This degree of freedom will be used to derive initial conditions
$X_k(0,\tau),\,Y_k(0,\tau)$ such that the two-scale unknown
$X_k(s,\tau),\,Y_k(s,\tau)$ and its time derivative are uniformly bounded.
This will be the objective of the rest of this section.

First, we start with the following elementary lemma
\begin{lemma}\label{lemma2}
Let $\mathcal{H}$ be a Banach algebra space of real-valued functions and let $\mathcal{H}^2$ denote the cartesian product $\mathcal{H} \times \mathcal{H}$.
Consider the following system of ordinary differential equations in $\mathcal{H}^2$ (i.e. $X^\varepsilon$ and $Y^\varepsilon$ are considered as functions from $[0, S_k]$ to $\mathcal{H}^2$)
\begin{eqnarray*}
\frac{d X^\varepsilon}{ds} &=& \frac{1}{\varepsilon} e^{Js/\varepsilon} Y^\varepsilon, \nonumber\\
\frac{d Y^\varepsilon}{ds} &=& \alpha^\varepsilon(s) Y^\varepsilon +\beta^\varepsilon(s) X^\varepsilon + \gamma^\varepsilon(s), \nonumber\\
X^\varepsilon(0)=X_0, && Y^\varepsilon(0)=Y_0 \;\;\;  \mbox{two given initial data},
\end{eqnarray*}
with $\alpha^\varepsilon(s), \beta^\varepsilon(s)$ are $2\times 2$ matrices with coefficients in $\mathcal{H}$ and
$\gamma^\varepsilon :  [0, S_k] \to \mathcal{H}^2$.
Assume that there exists a constant $C>0$ independent of $\varepsilon$ such that
$\| \alpha^\varepsilon(s) \|_{\mathcal{H}}\leq C$, $\| \beta^\varepsilon(s) \|_{\mathcal{H}}\leq C$ and
$\| \gamma^\varepsilon(s) \|_{\mathcal{H}}\leq C, \; \forall s\in [0, S_k]$. Then there exists a constant $M>0$ independent
of $\varepsilon$ such that
$$
\|X^\varepsilon(s)\|_{\mathcal{H}^2} + \|Y^\varepsilon(s)\|_{\mathcal{H}^2} \leq M (1+ \|X_0\|_{\mathcal{H}^2} + \|Y_0\|_{\mathcal{H}^2}), \; \forall s\in [0, S_k].
$$
\end{lemma}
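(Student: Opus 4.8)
The plan is to derive a closed system of integral inequalities for $\|X^\varepsilon(s)\|_{\mathcal{H}^2}$ and $\|Y^\varepsilon(s)\|_{\mathcal{H}^2}$ to which Gronwall's lemma applies. The obvious obstruction is the factor $1/\varepsilon$ in the equation for $X^\varepsilon$: integrating that equation and estimating crudely would produce a bound of size $O(1/\varepsilon)$, which is useless. I would remove this singular factor by an integration by parts exploiting the oscillatory structure of $e^{Js/\varepsilon}$, exactly as in the proof of Lemma~\ref{lemma1}. Concretely, integrating the first equation gives
\begin{equation*}
X^\varepsilon(s) = X_0 + \frac{1}{\varepsilon}\int_0^s e^{J\theta/\varepsilon}\,Y^\varepsilon(\theta)\,d\theta,
\end{equation*}
and, using $J^2=-I$ (hence $J^{-1}=-J$) to write $\tfrac{1}{\varepsilon}e^{J\theta/\varepsilon} = -J\,\tfrac{d}{d\theta}e^{J\theta/\varepsilon}$, an integration by parts followed by substitution of the equation for $\dot Y^\varepsilon$ yields
\begin{equation*}
X^\varepsilon(s) = X_0 + J Y_0 - J\,e^{Js/\varepsilon}Y^\varepsilon(s) + J\int_0^s e^{J\theta/\varepsilon}\big(\alpha^\varepsilon Y^\varepsilon + \beta^\varepsilon X^\varepsilon + \gamma^\varepsilon\big)(\theta)\,d\theta.
\end{equation*}

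The crucial point is that the $1/\varepsilon$ has disappeared from the integrand. Taking $\mathcal{H}^2$-norms and using that $e^{J\theta/\varepsilon}$ is a rotation (so $\|e^{J\theta/\varepsilon}\|=1$), that $\|J\|=1$, the Banach-algebra bound $\|\alpha^\varepsilon Y^\varepsilon\|_{\mathcal{H}^2}\leq \|\alpha^\varepsilon\|_{\mathcal{H}}\,\|Y^\varepsilon\|_{\mathcal{H}^2}$, and the uniform bounds on $\alpha^\varepsilon,\beta^\varepsilon,\gamma^\varepsilon$, I obtain
\begin{equation*}
\|X^\varepsilon(s)\|_{\mathcal{H}^2} \leq \|X_0\|_{\mathcal{H}^2} + \|Y_0\|_{\mathcal{H}^2} + \|Y^\varepsilon(s)\|_{\mathcal{H}^2} + C\int_0^s\big(\|X^\varepsilon\|_{\mathcal{H}^2} + \|Y^\varepsilon\|_{\mathcal{H}^2} + 1\big)\,d\theta.
\end{equation*}
The second equation is harmless and gives directly
\begin{equation*}
\|Y^\varepsilon(s)\|_{\mathcal{H}^2} \leq \|Y_0\|_{\mathcal{H}^2} + C\int_0^s\big(\|X^\varepsilon\|_{\mathcal{H}^2} + \|Y^\varepsilon\|_{\mathcal{H}^2} + 1\big)\,d\theta.
\end{equation*}

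The one subtlety is the boundary term $\|Y^\varepsilon(s)\|_{\mathcal{H}^2}$, evaluated at the endpoint, that appears in the estimate for $X^\varepsilon$; I would absorb it by inserting the integral bound for $\|Y^\varepsilon(s)\|_{\mathcal{H}^2}$ into the right-hand side of the $X^\varepsilon$ estimate. Setting $u(s)=\|X^\varepsilon(s)\|_{\mathcal{H}^2}+\|Y^\varepsilon(s)\|_{\mathcal{H}^2}$ and adding the two resulting inequalities gives
\begin{equation*}
u(s) \leq \|X_0\|_{\mathcal{H}^2} + 3\|Y_0\|_{\mathcal{H}^2} + 3C S_k + 3C\int_0^s u(\theta)\,d\theta,
\end{equation*}
and the claimed bound, with $M$ depending only on $C$ and $S_k$ but not on $\varepsilon$, follows from Gronwall's inequality.

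I expect the only genuine difficulty to be the integration by parts that removes the $1/\varepsilon$ factor — without it no $\varepsilon$-uniform estimate is possible — together with the careful bookkeeping of the endpoint term $\|Y^\varepsilon(s)\|_{\mathcal{H}^2}$ produced by the boundary contribution. Everything else, namely the use of the Banach-algebra property to control the matrix products and the concluding Gronwall step, is routine.
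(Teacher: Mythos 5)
Your proposal is correct and takes essentially the same approach as the paper's own proof: the identical integration by parts (using $J^{-1}=-J$ and the oscillatory factor $e^{J\theta/\varepsilon}$) to eliminate the $1/\varepsilon$, substitution of the equation for $\dot Y^\varepsilon$ into the boundary-term-producing integral, absorption of the endpoint term $\|Y^\varepsilon(s)\|_{\mathcal{H}^2}$ by the direct integral estimate for $Y^\varepsilon$, and a concluding Gronwall argument. No gaps.
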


\begin{proof}
We integrate the equation on $X^\eps$ and perform an integration by parts to get
\begin{eqnarray*}
X^\eps(s) &=& X_0 + \frac{1}{\varepsilon} \int_0^s e^{J\sigma /\varepsilon} Y^\eps(\sigma) d\sigma \nonumber\\
&=& X_0 + \frac{1}{\varepsilon} \left\{ \left[ -\varepsilon J e^{J\sigma /\varepsilon} Y^\eps(\sigma)\right]_0^s
+ \varepsilon J  \int_0^s e^{J\sigma /\varepsilon} \frac{d Y^\eps(\sigma)}{ds } d\sigma \right\}\nonumber\\
&=& X_0 - J e^{Js /\varepsilon} Y^\eps(s) + J Y_0
+ \varepsilon J  \int_0^s e^{J\sigma /\varepsilon} \left[ \alpha^\eps(\sigma) Y^\eps(\sigma) +\beta^\eps(\sigma) X^\eps(\sigma) + \gamma^\eps(\sigma) \right] d\sigma,
\end{eqnarray*}
where we used the equation on $Y^\eps$.
We always use $C$ in  proofs to denote a positive constant independent of $\eps$,
and its value may change from one line to the next.
Considering the norm $\|\cdot \|_{\mathcal H}^2$ leads to
$$
\|X^\eps(s)\|_{\mathcal{H}^2} \leq \|X_0\|_{\mathcal{H}^2} +C \|Y_0\|_{\mathcal{H}^2} +\|Y^\eps(s)\|_{\mathcal{H}^2}+C +C\int_0^s \left[ \|Y^\eps(\sigma)\|_{\mathcal{H}^2}+\|X^\eps(\sigma)\|_{\mathcal{H}^2}   \right] d\sigma,
$$
using $\|e^{J \tau} u \|_{\mathcal{H}^2}\leq \|u\|_{\mathcal{H}^2}$ for all $u\in \mathcal{H}^2$, with $C$ independent of $\tau$.
Integrating now the equation on $Y^\eps$  gives directly
$$
\|Y^\eps(s)\|_{\mathcal{H}^2} \leq \|Y_0\|_{\mathcal{H}^2} +C +C\int_0^s \left[ \|Y^\eps(\sigma)\|_{\mathcal{H}^2}+\|X^\eps(\sigma)\|_{\mathcal{H}^2}   \right] d\sigma,
$$
which reported in the former inequality gives
$$
\|X^\eps(s)\|_{\mathcal{H}^2} \leq  C(\|X_0\|_{\mathcal{H}^2} +\|Y_0\|_{\mathcal{H}^2}) +C +C\int_0^s \left[ \|Y^\eps(\sigma)\|_{\mathcal{H}^2}+\|X^\eps(\sigma)\|_{\mathcal{H}^2}   \right] d\sigma.
$$
The two last inequalities clearly lead to
$$
\|X^\eps(s)\|_{\mathcal{H}^2} + \|Y^\eps(s)\|_{\mathcal{H}^2} \leq  C(\|X_0\|_{\mathcal{H}^2} +\|Y_0\|_{\mathcal{H}^2}) +C +C\int_0^s \left[ \|Y^\eps(\sigma)\|_{\mathcal{H}^2}+\|X^\eps(\sigma)\|_{\mathcal{H}^2}   \right] d\sigma.
$$
A standard Gronwall lemma enables to prove the result of the lemma.
\end{proof}

For convenience, we shall denote $L^\infty_s:=L^\infty_s([0, S_k])$
and $L^\infty_\tau:=L^\infty_\tau(\bT)$ the functional spaces
in $s$ and $\tau$ variables. Now, for a smooth periodic function $u(\tau)$ on $\bT$,
we introduce
$$
\|u\|_{W_\tau^{1,\infty}} =\max\left\{ \|u\|_{L_\tau^{\infty}}, \|\partial_\tau u\|_{L_\tau^{\infty}}\right\}.
$$
For a smooth vector field
$\bU(\tau)$ on $\bT$, we define its $W^{1, \infty}_\tau$-norm as
$$\|\bU\|_{W_\tau^{1,\infty}}=\max\left\{\|u_1\|_{W_\tau^{1,\infty}}, \|u_2\|_{W_\tau^{1,\infty}}\right\},\quad
\mbox{for}\quad \bU(\tau)=\binom{u_1(\tau)}{u_2(\tau)}.$$

\begin{lemma}\label{lemma21}
Assume (\ref{Lips}), $E^\eps(t,\bx)\in {\mathcal C}^1([0,T]\times\bR^2)$ and $b(\bx)\in {\mathcal C}^1(\bR^2)$.
For the two-scale problem (\ref{charact 2scale}), if initially
\begin{equation}\label{lemma2 assump}
\frac{1}{\eps}\|X_k(0,\cdot)-\bx_{k,0}\|_{W_\tau^{1, \infty}}+\|Y_k(0,\cdot)\|_{W_\tau^{1, \infty}}\leq C_0,
\end{equation}
for some constant $C_0>0$  independent of $\eps$, then
\begin{equation}\label{lemma2_conf}
\frac{1}{\eps}\|X_k-\bx_{k,0}\|_{L_s^\infty(W_\tau^{1, \infty})}+
\|Y_k\|_{L_s^\infty( W_\tau^{1, \infty})}\leq C_1,
\end{equation}
for some $C_1>0$  independent of $\eps$. Hence,
\begin{equation}\label{lemma2 result}
\frac{1}{\eps}\left\|\frac{b(X_k)}{b_k}-1\right\|_{L_s^\infty( W_\tau^{1, \infty})}\leq C_2,
\end{equation}
for some $C_2>0$ independent of $\eps$.
\end{lemma}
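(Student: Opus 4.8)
The plan is to convert the two-scale transport system \eqref{charact 2scale} into a family of ordinary differential equations by freezing the fast phase, then to reproduce the two ingredients already present in Lemma \ref{lemma1} (a skew-symmetry cancellation and an integration by parts that trades the factor $1/\eps$ for a factor $\eps$), and finally to invoke Lemma \ref{lemma2} on the system obtained after one differentiation in $\tau$. Throughout I work along the characteristics $\tau=\tau_0+s/\eps$ of the operator $\partial_s+\frac1\eps\partial_\tau$: since this is the material derivative, for each fixed initial phase $\tau_0\in\bT$ the functions $s\mapsto X_k(s,\tau_0+s/\eps)$ and $s\mapsto Y_k(s,\tau_0+s/\eps)$ solve exactly a filtered system of the type \eqref{charact3} (with rotation $\fe^{\tau_0 J}\fe^{Js/\eps}$), and the shift $\tau\mapsto\tau+s/\eps$ is an isometry of $L^\infty_\tau$ and of $W^{1,\infty}_\tau$, so that the desired bounds reduce to bounds uniform in $\tau_0$.

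First I would establish the zeroth-order estimates, i.e. \eqref{lemma2_conf} with $L^\infty_\tau$ in place of $W^{1,\infty}_\tau$. The key observation, already implicit in Lemma \ref{lemma1}, is that the stiff nonlinear term $\frac1\eps\big(\tfrac{b(X_k)}{b_k}-1\big)JY_k$ is energy neutral: the prefactor is scalar and $J$ is skew-symmetric (see \eqref{Jmatrix}), so $\langle JY_k,Y_k\rangle=0$ and this term drops out of $\frac{d}{ds}|Y_k|^2$ along characteristics. Hence $\|Y_k(s,\cdot)\|_{L^\infty_\tau}$ is controlled by the data and the bounded forcing $\fe^{-\tau J}\bE^\eps/b_k$ alone, \emph{independently} of the confinement, which is what breaks the apparent circularity. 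With $\|Y_k\|_{L^\infty_\tau}$ in hand I integrate \eqref{charact 2scale a} along characteristics and integrate by parts: the oscillation $\fe^{Js/\eps}$ produces a factor $\eps$, while the $s$-derivative of $Y_k$ along the characteristics is estimated linearly in $\frac1\eps\|X_k-\bx_{k,0}\|$ through \eqref{charact 2scale b} and the Lipschitz bound \eqref{Lips} on $b$. A linear Gronwall argument then yields $\frac1\eps\|X_k-\bx_{k,0}\|_{L^\infty_\tau}\leq C$, exactly as in Lemma \ref{lemma1}; in particular $\frac1\eps\big|\tfrac{b(X_k)}{b_k}-1\big|$ is bounded in $L^\infty_\tau$.

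Next I would differentiate \eqref{charact 2scale} in $\tau$ and set $P_k:=\frac1\eps\partial_\tau X_k$ and $Q_k:=\partial_\tau Y_k$. The crucial and delicate point, and the way the lack of smoothness of the nonlinearity is circumvented, is that the $\tau$-derivative of the stiff coefficient must be kept in the form $\partial_\tau\!\big[\tfrac1\eps(\tfrac{b(X_k)}{b_k}-1)\big]=\tfrac1{b_k}\nabla_\bx b(X_k)\cdot P_k$, which is bounded as soon as $P_k$ is and requires only $b\in\mathcal C^1$ (no second derivative of $b$ appears); likewise $\partial_\tau\big(\fe^{-\tau J}\bE^\eps(s/b_k,X_k)/b_k\big)$ is bounded using $\bE^\eps\in\mathcal C^1$ together with $\partial_\tau X_k=\eps P_k$. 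Using the zeroth-order bounds of the previous step, the pair $(P_k,Q_k)$, read along characteristics, then solves a system precisely of the form treated in Lemma \ref{lemma2} with $\mathcal H=L^\infty(\bT)$: the $P_k$-equation matches the $X$-equation of Lemma \ref{lemma2} up to an additional $\frac1\eps\fe^{Js/\eps}$-oscillatory but bounded source absorbed by the same integration by parts, while the $Q_k$-equation reads $\partial_s Q_k=\alpha^\eps Q_k+\beta^\eps P_k+\gamma^\eps$ with $\alpha^\eps=\big(\tfrac{b(X_k)}{b_k}-1\big)\tfrac1\eps J$, with $\beta^\eps$ issued from $\tfrac1{b_k}\nabla_\bx b(X_k)\cdot(\,\cdot\,)\,JY_k$, and with $\gamma^\eps$ from the $\tau$-derivative of the forcing, all bounded uniformly in $\eps$ by the first step. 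Lemma \ref{lemma2} then gives $\|P_k\|_{L^\infty_\tau}+\|Q_k\|_{L^\infty_\tau}\leq C$, which together with the zeroth-order estimate is exactly \eqref{lemma2_conf}. The consequence \eqref{lemma2 result} follows by the same $\mathcal C^1$ mean-value identity as at the end of the proof of Lemma \ref{lemma1}, applied now in the Banach algebra $W^{1,\infty}_\tau$ and using $\partial_\tau b(X_k)=\nabla_\bx b(X_k)\cdot\partial_\tau X_k=O(\eps)$.

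To make these a priori estimates rigorous I would run a standard continuation argument: local well-posedness of \eqref{charact 2scale} in $W^{1,\infty}_\tau$ provides a maximal solution, and because the estimates above are ultimately linear (the stiff nonlinearity having been neutralised by skew-symmetry at zeroth order and linearised via $P_k$ at first order), they close on the whole interval $[0,S_k]$ and prevent blow-up. I expect the main obstacle to lie exactly in these two neutralisation mechanisms, which must be identified before any Gronwall estimate can be applied: the skew-symmetry of $J$, which removes the $O(1/\eps)$ nonlinear term from the equation for $Y_k$ and decouples the estimate for $\|Y_k\|$ from the still unknown confinement; and the reformulation $\partial_\tau[\tfrac1\eps(\tfrac{b(X_k)}{b_k}-1)]=\tfrac1{b_k}\nabla_\bx b(X_k)\cdot P_k$, which makes the differentiated system fit the hypotheses of Lemma \ref{lemma2} while using only the $\mathcal C^1$ regularity of $b$ assumed in the statement.
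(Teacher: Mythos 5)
Your proposal is correct and follows essentially the same route as the paper's proof: freezing the fast phase along the characteristics $\tau\mapsto\tau+s/\eps$, establishing the zeroth-order $L^\infty_\tau$ bounds by the skew-symmetry/integration-by-parts/Gronwall mechanism of Lemma \ref{lemma1}, then differentiating in $\tau$ with the scaled unknowns $\frac{1}{\eps}\partial_\tau X_k$ and $\partial_\tau Y_k$ and invoking Lemma \ref{lemma2} with $\mathcal{H}=L^\infty_\tau$. The only (cosmetic) difference is that the paper first conjugates the velocity variable into $\tilde Y_{k,\tau}=\fe^{\tau J}Y_{k,\tau}/b_k$ \emph{before} differentiating, so that the differentiated system fits Lemma \ref{lemma2} verbatim with no extra source, whereas you keep $\partial_\tau Y_k$ and absorb the resulting $\frac{1}{\eps}\fe^{Js/\eps}$-oscillatory term by an additional integration by parts — the two bookkeepings are algebraically equivalent, since $\partial_\tau\tilde Y_{k,\tau}=\fe^{\tau J}\left(JY_{k,\tau}+\partial_\tau Y_{k,\tau}\right)/b_k$.
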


\begin{proof}
Consider $\tau$ as a parameter in $\bT$ and define
\begin{equation}
\label{xktau}
{X}_{k,\tau}(s):=X_k\left(s,\tau+\frac{s}{\eps}\right),\quad {Y}_{k,\tau}(s):=Y_k\left(s,\tau+\frac{s}{\eps}\right),\quad s\geq0.
\end{equation}
The two scale problem (\ref{charact 2scale}) then reads
\begin{subequations}
\label{lm2eq1}
 \begin{align}
&\dot{{X}}_{k,\tau}=\frac{\fe^{J(\tau+s/\eps)}}{b_k}{Y}_{k,\tau}, \label{lm2eq1a}\\
&\dot{{Y}}_{k,\tau}=\frac{1}{\eps}\left(\frac{b({X}_{k,\tau})}{b_k}-1\right)J{Y}_{k,\tau}+
\fe^{-J(\tau+s/\eps)}\frac{\bE^\eps(s/b_k,{X}_{k,\tau})}{b_k},\quad s>0. \label{lm2eq1b}
\end{align}
\end{subequations}
Using the same strategy as in the proof of Lemma \ref{lemma1}, we have
$$
|X_k(s, \tau+s/\eps)-\bx_{k,0}|\leq C\eps \mbox{ and }  |Y_k(s, \tau+s/\eps)| \leq C, \;\; \forall \; 0\leq s\leq S_k, 0\leq \tau\leq 2\pi,
$$
so that
\begin{equation}
\label{xestimate_lemma23}
\left\|{X}_{k}\left(s,\cdot\right)-\bx_{k,0}\right\|_{L^\infty_\tau}\leq C\eps, \quad 0\leq s\leq S_k.
\end{equation}

From now, we focus on the $\tau$-derivative to get $W^{1,\infty}_\tau$ estimate.
First, we rewrite \eqref{lm2eq1} by considering
the new unknown $\tilde{Y}_{k,\tau} = \frac{\fe^{\tau J}}{b_k}{Y}_{k,\tau}$
\begin{subequations}
\label{lm2eq2}
 \begin{align}
&\dot{{X}}_{k,\tau}= \fe^{J s/\eps} \tilde{Y}_{k,\tau}, \label{lm2eq2a}\\
&\dot{\tilde{Y}}_{k,\tau}=\frac{1}{\eps}\left(\frac{b({X}_{k,\tau})}{b_k} - 1 \right)J\tilde{Y}_{k,\tau}+
\fe^{-J s/\eps }\frac{\bE^\eps(s/b_k,{X}_{k,\tau})}{b_k^2},\quad s>0. \label{lm2eq2b}
\end{align}
\end{subequations}
Taking now the derivative with respect to $\tau$ of \eqref{lm2eq2} and denoting
\begin{equation}
\label{change2}
\hat{X}_{k,\tau}(s,\tau):= \frac{1}{\eps}\partial_\tau X_{k,\tau}(s, \tau),\quad \hat{Y}_{k,\tau}(s,\tau):=\partial_\tau \tilde{Y}_{k,\tau}(s,\tau)
\end{equation}
we get
\begin{subequations}
\begin{align}
\dot{ \hat{X}}_{k,\tau}=&\frac{\fe^{J s/\eps}}{\eps}  \hat{Y}_{k,\tau}, \label{lm2eq3a}\\
\dot{ \hat{Y}}_{k,\tau}=&\frac{1}{\eps}  \left( \frac{b(X_{k,\tau})}{b_k}-1 \right)  J\hat{Y}_{k,\tau}  + \nabla_\bx b(X_{k,\tau})
\cdot \hat{X}_{k,\tau} J \tilde{Y}_{k,\tau} +\eps\frac{\fe^{-J s/\eps }}{b_k^2} \nabla_\bx\bE^\eps(s/b_k,X_{k,\tau})  \hat{ X}_{k,\tau} \label{lm2eq3b}
\end{align}
\end{subequations}
Now, using Lemma \ref{lemma2} with $\mathcal{H} = L^{\infty}_\tau([0, 2\pi])$ and \eqref{xestimate_lemma23}, we conclude that
$$
\|\hat{X}_{k,\tau}(s, \cdot)\|_{L^\infty_\tau} +  \|\hat{Y}_{k,\tau}(s, \cdot)\|_{L^\infty_\tau}  \leq C, \;\; \forall 0 \, \leq s\leq S_k,
$$
since $\hat{X}_{k,\tau}(0, \cdot)=O(1)$ and $\hat{Y}_{k,\tau}(0, \cdot)=O(1)$ by assumption, which concludes the proof.
\end{proof}

\subsection{Suitable initial data for the two-scale formulation} \label{sect:ic}
In this subsection, we look for an initial data $X_k(0, \tau), Y_k(0, \tau)$ of
the two-scale formulation \eqref{charact 2scale}, which will ensure that
the time derivatives of the solutions $X_k(s, \tau), Y_k(s, \tau)$
are uniformly bounded. This will be done
using Chapman-Enskog expansion of the solution.

\paragraph{\bf{First order preparation}}
We perform the Chapman-Enskog expansion to get the full initial data $X_k(0,\tau)$, $Y_k(0,\tau)$ for (\ref{charact 2scale}). This will be done by formal arguments and a rigorous statement will be proved in the next subsection.

Denote
\begin{equation}\label{CP}
X_k(s,\tau)=\underline{X}_k(s)+\bh_k(s,\tau),\quad Y_k(s,\tau)=\underline{Y}_k(s)+\br_k(s,\tau),
\end{equation}
where
$$
\underline{X}_k(s)=\Pi X_k(s,\tau),\quad \underline{Y}_k(s)=\Pi Y_k(s,\tau),
$$
with the average operator $\Pi$ defined for some periodic function $u(\tau)$ on $\bT$ as $\Pi u=\frac{1}{2\pi}
\int_0^{2\pi}u(\theta)d\theta.$ Denoting $Lu(\tau)=\partial_\tau u(\tau)$, we have
\begin{equation}\label{eq:X}
\left\{\begin{split}
&\partial_s {\underline{X}}_k=\Pi\fe^{\tau J}\frac{Y_k}{b_k},\quad s>0,\ \tau\in\bT,\\
&\partial_s \bh_k+\frac{1}{\eps}L \bh_k=(I-\Pi)\fe^{\tau J}\frac{Y_k}{b_k},
 \end{split}\right.
\end{equation}
and
\begin{equation}\label{eq:Y}\left\{\begin{split}
&\partial_s {\underline{Y}}_k=\Pi\left[ \frac{1}{\eps}\left(\frac{b(X_k)}{b_k}-1\right)JY_k+
\fe^{-J\tau}\frac{\bE^\eps(s/b_k,X_k)}{b_k} \right],\quad s>0,\ \tau\in\bT,\\
&\partial_s \br_k+\frac{1}{\eps}L \br_k=(I-\Pi)\left[ \frac{1}{\eps}\left(\frac{b(X_k)}{b_k}-1\right)JY_k+
\fe^{-J\tau}\frac{\bE^\eps(s/b_k,X_k)}{b_k} \right].
 \end{split}\right.
\end{equation}
Taking the inverse of $L$, which for a zero average function $u(\tau)$ is computed as
$L^{-1}u(\tau)=(I-\Pi)\int_0^\tau u(\theta)d\theta,$
on the above equations for $\bh_k$ and $\br_k$ and denoting $A:=L^{-1}(I-\Pi)$, we get
\begin{subequations}\label{hr def}
\begin{align}
&\bh_k(s,\tau)=\eps A \fe^{\tau J}\frac{Y_k}{b_k}-\eps L^{-1}\partial_s \bh_k,\label{hr def a}\\
&\br_k(s,\tau)=A\left(\frac{b(X_k)}{b_k}-1\right)JY_k+\eps A\fe^{-J\tau}\frac{\bE^\eps(s/b_k,X_k)}{b_k}-\eps L^{-1}
\partial_s\br_k.\label{hr def b}
\end{align}
\end{subequations}
Assuming that $\partial_s \bh_k,\partial_s \br_k=O(1)$ as $\eps\to 0$, from (\ref{hr def a}) we have firstly
$$\bh_k(s,\tau)=O(\eps),\quad s\geq0,\ \tau\in\bT.$$
Thanks to Lemma \ref{lemma21}, we have
$b(X_k)/b_k-1=O(\eps)$, and consequently from (\ref{hr def b}) we get
$$
\br_k(s,\tau)=O(\eps),\quad s\geq0,\ \tau\in\bT.
$$
We now take the time derivative of (\ref{hr def})
\begin{subequations}\label{dthr def}
\begin{align}
&\partial_s\bh_k(s,\tau)= \frac{\eps}{b_k}A \fe^{\tau J}\left(\partial_s{\underline{Y}}_k+\partial_s\br_k\right)-\eps L^{-1}\partial_s^2 \bh_k,\label{dthr def a}\\
&\partial_s\br_k(s,\tau)=A\left(\frac{b(X_k)}{b_k}-1\right)J\left(\partial_s{\underline{Y}}_k+\partial_s\br_k\right)
+\frac{1}{b_k}A\nabla_\bx b(X_k)\cdot(\partial_s{\underline{X}}_k+\partial_s\bh_k)JY_k\nonumber\\
&\qquad+ \frac{\eps}{b_k}A\fe^{-J\tau}\left(\frac{\partial_t\bE^\eps(s/b_k,X_k)}{b_k}+\nabla_\bx\bE^\eps(s/b_k,X_k)
(\partial_s{\underline{X}}_k+\partial_s\bh_k)\right)-\eps L^{-1}
\partial_s^2\br_k.\label{dthr def b}
\end{align}
\end{subequations}
Assuming that $\partial^2_s \bh_k,\partial^2_s \br_k=O(1)$ and observing  that
$\partial_s{\underline{X}}_k=\Pi (\fe^{\tau J} \br_k)/b_k = O(\eps)$,
we get $\partial_s \bh_k,\partial_s \br_k=O(\eps)$.

We then obtain the first order asymptotic expansions from (\ref{hr def})
\begin{align*}
\bh_k(0,\tau)=& \frac{\eps}{b_k}A \fe^{\tau J}\underline{Y}_k(0)+O(\eps^2),\\
\br_k(0,\tau)=&A\left(\frac{b\left(\underline{X}_k(0)+\bh_k(0,\tau)\right)}{b_k}-1\right)J\underline{Y}_k(0)
\hspace{-1.4cm}&+\eps A\fe^{-J\tau}\frac{\bE^\eps(0,\underline{X}_k(0))}{b_k}+O(\eps^2),
\end{align*}
To determine $\bh_k$ and $\br_k$ at $s=0$, we then need to compute $\underline{X}_k(0)$ and $\underline{Y}_k(0)$.
These quantities will be determined from the initial conditions $\bx_{k,0}$ and $\bv_{k,0}$. Indeed,
we recall that
\begin{equation}
\label{ci_xy}
\bx_{k,0} = \underline{X}_k(0)+\bh_k(0,0),\qquad \bv_{k,0}=\underline{Y}_k(0)+\br_k(0,0).
\end{equation}
Since $\bh_k=O(\eps)$, we have
$$
\bh_k(0,\tau)= \bh_k^{1st}(\tau)+O(\eps^2), \mbox{ with }  \bh_k^{1st}( \tau):=\frac{\eps}{b_k}A \fe^{\tau J}\bv_{k,0} = -\frac{\eps}{b_k} J\be^{\tau J}\bv_{k,0},
$$
so that we get the following first order expansion for $X_k$
$$
X_k(0, \tau)= \underline{X}_k(0) + \bh^{1st}_k(\tau) +O(\eps^2),
$$
so that,
\begin{equation}
\label{ini 1st}
X_k(0, \tau)=X^{1st}_k( \tau)+O(\eps^2),
\end{equation}
where, using \eqref{ci_xy}  we define
\begin{eqnarray}
X^{1st}_k( \tau) &:=&\bx_{k,0} +\bh_k^{1st}(\tau)- \bh_k^{1st}(0)\nonumber\\
&=&  \bx_{k,0} -\frac{\eps}{b_k} J(\be^{\tau J}-I)\bv_{k,0}.
\label{Xfirst}
\end{eqnarray}
Since $\br_k=O(\varepsilon)$, we have
\begin{align*}
&\br_k(0,\tau)=A\left(\frac{b \left({X}_k^{1st}(\tau)\right)}{b_k}-1\right)J\bv_{k,0}+\eps A\fe^{-J\tau}\frac{\bE^\eps(0,\bx_{k,0})}{b_k}+O(\eps^2).
\end{align*}
We then derive a first order expansion for $Y_k$
$$
Y_k(0, \tau)=\underline{Y}_k(0)+\br_k^{1st}(\tau)+O(\eps^2),
$$
with
\begin{equation}
\label{rk1st}
\br_k^{1st}(\tau):=A\left(\frac{b(X_k^{1st}(\tau))}{b_k}-1\right)J\bv_{k,0}+\eps J\be^{-\tau J}\frac{\bE^\eps(0,\bx_{k,0})}{b_k}.
\end{equation}
We combine this identity with \eqref{ci_xy} to get the first order approximation of $Y_k$
\begin{equation}
\label{Yinit}
Y_k(0, \tau)=Y^{1st}_k(\tau)+O(\eps^2),
\end{equation}
where
\begin{eqnarray}
Y_k^{1st}(\tau) &:=& \bv_{k,0} + \br_k^{1st}(\tau)-\br_k^{1st}(0),\nonumber\\
\label{Yfirst}
&=& \bv_{k,0} + \int_0^\tau (I-\Pi) \left[ \frac{b(X_k^{1st}(\sigma))}{b_k}-1\right] d\sigma J\bv_{k,0}+\eps J (\be^{-\tau J}-I)\frac{\bE^\eps(0,\bx_{k,0})}{b_k}. \qquad 
\end{eqnarray}

\paragraph{\bf{Second order preparation}}
We continue the preparation of initial data to the second order in $\eps$.
Inserting \eqref{dthr def a} in \eqref{hr def a} and using $\partial_s \br_k=O(\eps)$ and $Y_k(0, \tau)-Y_k^{1st}(\tau)=O(\eps^2)$, we get
\begin{eqnarray}
\bh_k(0,\tau)
&=& \eps A \fe^{\tau J}\frac{Y_k^{1st}(\tau)}{b_k}-\frac{\eps^2}{b_k} L^{-1}  A \fe^{\tau J} \partial_s \underline{Y}_k(0) + O(\varepsilon^3),
\label{h1}
\end{eqnarray}
where we assumed $\partial_s^3 \bh_k=O(1)$ which implies as above that
$\partial_s^2 \bh_k=O(\eps)$.
Now, from \eqref{eq:Y}, since $X_k(s, \tau)-\underline{X}_k(s)=O(\eps)$ and
$X_k(0, \tau)-X_k^{1st}( \tau)= O(\varepsilon^2)$, we can write
\begin{eqnarray*}
 \partial_s {\underline{Y}}_k(0)
&=& \Pi \left[ \frac{1}{\eps}\left(\frac{b(X_k^{1st})}{b_k}-1\right)J\underline{Y}_k(0)+\fe^{-J\tau}\frac{\bE^\eps(0,\underline{X_k}(0))}{b_k} \right] + O(\varepsilon)\nonumber\\
&=& \Pi \left[ \frac{1}{\eps}\left(\frac{b(X_k^{1st})}{b_k}-1\right)\right]J \underline{Y}_k(0)  + O(\varepsilon),
\end{eqnarray*}
using $\Pi \fe^{-\tau J} = 0$.  We then define  $\bh_k^{2nd}(\tau)$
($\bh_k=\bh_k^{2nd}+O(\varepsilon^3)$) by injecting the previous expansion in \eqref{h1} to get
\begin{equation}
\label{h2nd}
\bh_k^{2nd}(\tau) :=\frac{ \eps}{b_k} A (\fe^{\tau J}Y_k^{1st}) -\frac{\eps^2}{b_k} L^{-1} A \fe^{\tau J}  \Pi \left[ \frac{1}{\eps}\left(\frac{b(X_k^{1st})}{b_k}-1\right)\right]J \bv_{k,0},
\end{equation}
since $\underline{Y}_k(0)=\bv_{k,0} + O(\varepsilon)$ by \eqref{ci_xy}.
We then get the following second order expansion for $X_k$
$$
X_k(0, \tau)= \underline{X}_k(0) + \bh^{2nd}_k( \tau) +O(\eps^3),
$$
so that
\begin{equation}
\label{ini 2nd}
X_k(0, \tau)=X^{2nd}_k(\tau)+O(\eps^3),
\end{equation}
where, using  \eqref{ci_xy}   we define
\begin{equation}
\label{Xsecond}
X^{2nd}_k(\tau) := \bx_{k,0} + \bh_k^{2nd}(\tau) - \bh_k^{2nd}(0),
\end{equation}
where $\bh_k^{2nd}$ is given by \eqref{h2nd} and where $X_k^{1st}, Y_k^{1st}$ are given
by \eqref{Xfirst} and \eqref{Yfirst}.

Let us deal with the second order expansion of $Y_k$. From \eqref{hr def b},
we get
\begin{equation}
\label{rfull}
\br_k(0,\tau):=A \, B_k(X_k^{2nd}) J Y_k^{1st} + \frac{\eps}{b_k}
A\fe^{-J\tau}\bE^\eps(0,X_k^{1st})-\eps L^{-1}\partial_s\br_k +O(\eps^3),
\end{equation}
with $B_k(X)=\frac{b(X)}{b_k}-1$. Therefore, it  remains to find an expansion
of $\partial_s\br_k$ (given by \eqref{dthr def b}) up to order $1$ in $\eps$.
To that purpose, we use  \eqref{dthr def b}, \eqref{dthr def a} and the first equation of \eqref{eq:Y}. We find $\partial_s\br_k (0, \tau)= {\mathcal T}^{2nd}(\tau) + O(\eps^2)$ where  ${\mathcal T}^{2nd}$ is given by
\begin{align}
 \mathcal{T}^{2nd}(\tau) &= \frac{1}{\eps} A B_k(X_k^{1st}) J \Pi \left[B_k(X_k^{1st}) J \bv_{k,0} \right] +\frac{\eps}{b_k^2} A\fe^{-\tau J}  \partial_t E(0, \bx_{k,0})\nonumber\\
&+\frac{1}{b_k^2}A \nabla_\bx b (X_k^{1st}) \cdot \left[ \Pi (\fe^{\tau J} Y_k^{1st})
+A\fe^{\tau J} \Pi (B_k(X_k^{1st}) J \bv_{k,0} \right] J\bv_{k,0}. 
\label{tsecond}
\end{align}
We now insert this expression in \eqref{rfull} to get
\begin{equation}
\label{r2nd}
\br_k^{2nd}(\tau):=A B_k(X_k^{2nd})JY_k^{1st}+\frac{\eps}{b_k}
A\fe^{-J\tau}\bE^\eps(0,X_k^{1st})-\eps L^{-1}\mathcal{T}^{2nd}.
\end{equation}
We then get the following third order expansion for $Y_k$
$$
Y_k(0, \tau)= \underline{Y}_k(0) + \br^{2nd}_k(\tau) +O(\eps^3),
$$
so that
\begin{equation}
\label{ini 3rd}
Y_k(0, \tau)=Y^{2nd}_k( \tau)+O(\eps^3),
\end{equation}
 where, using  \eqref{ci_xy}
\begin{equation}
\label{Ysecond}
Y^{2nd}_k(\tau) := \bv_{k,0} + \br_k^{2nd}(\tau) - \br_k^{2nd}(0),
\end{equation}
where $\br_k^{2nd}$ is given by \eqref{r2nd} and where $X_k^{1st}, Y_k^{1st}, X_k^{2nd}, \mathcal{T}^{2nd}$
are given by \eqref{Xfirst}, \eqref{Yfirst}, \eqref{Xsecond},  and \eqref{tsecond}.

\subsection{Estimates of the time derivatives} \label{sect:re}
In this subsection, we prove that the time derivatives of
$X_k(s, \tau)/\eps, Y_k(s, \tau)$ are uniformly bounded when the initial data is
chosen following the Chapman-Enskog procedure presented previously.
Note that due to the $1/\eps$ factor for $X_k$, the expansion for $X_k$ has
to be performed one order further compared to the expansion of $Y_k$.
This is stated in the following proposition.
\begin{proposition}\label{prop:1st}
$(i)$ Assuming (\ref{Lips}) and  $E^\eps(t,\bx)\in {\mathcal C}^2([0,T]\times\bR^2)$ and $b(\bx)\in {\mathcal C}^2(\bR^2)$.
With the first order initial data $X_k(0,\tau)=X_k^{1st}(\tau)$ given by  \eqref{Xfirst}
and $Y_k(0,\tau)=\bv_{k,0}$
the solution of the two-scale system  (\ref{charact 2scale}) satisfies
$$\frac{1}{\eps}\|\partial_sX_k\|_{L_s^\infty( W^{1,\infty}_\tau)}+\|\partial_sY_k\|_{L_s^\infty( W^{1,\infty}_\tau)}\leq C_0,$$
for some constant $C_0>0$  independent of $\eps$.

$(ii)$ Assuming (\ref{Lips}) and $E^\eps(t,\bx)\in {\mathcal C}^3([0,T]\times\bR^2)$ and $b(\bx)\in {\mathcal C}^3(\bR^2)$. With the second order initial data $X_k(0,\tau)=X_k^{2nd}(\tau)$ given by \eqref{Xsecond} and $Y_k(0,\tau)=Y_k^{1st}(\tau)$ given by \eqref{Yfirst}, the solution of the two-scale system  (\ref{charact 2scale}) satisfies
$$
\frac{1}{\eps}\|\partial_s^\ell X_k\|_{L_s^\infty( W_\tau^{1,\infty})}+\|\partial_s^\ell Y_k\|_{L_s^\infty( W_\tau^{1,\infty})}\leq C_0,\quad \ell=1,2,
$$
 for some constant $C_0>0$ independent of $\eps$.
\end{proposition}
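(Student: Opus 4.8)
The plan is to obtain these bounds by differentiating the two-scale system \eqref{charact 2scale} in the slow time $s$ (once for $(i)$, once and twice for $(ii)$) and casting the resulting system, which is \emph{linear} in the derivatives, into the abstract form of Lemma \ref{lemma2} with base space $\mathcal H=W^{1,\infty}_\tau$. For $(i)$ I would set $P_k=\partial_s X_k$, $R_k=\partial_s Y_k$ and differentiate \eqref{charact 2scale}; the product and chain rules produce a transport system for $(P_k,R_k)$ whose only genuinely singular contributions are $\frac1\eps\big(\tfrac{b(X_k)}{b_k}-1\big)JR_k$ and $\frac{1}{\eps b_k}\big(\nabla_\bx b(X_k)\cdot P_k\big)JY_k$, the remaining terms coming from $\partial_t\bE^\eps$, $\nabla_\bx\bE^\eps$ being non-singular.

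Then, exactly as in the proof of Lemma \ref{lemma21}, I freeze $\tau\in\bT$ as a parameter, pass to the characteristic unknowns $P_{k,\tau}(s)=P_k(s,\tau+s/\eps)$, $R_{k,\tau}(s)=R_k(s,\tau+s/\eps)$ (so that $\partial_s+\frac1\eps\partial_\tau$ becomes $\frac{d}{ds}$), and perform two normalisations: a rescaling $\hat P_{k,\tau}:=P_{k,\tau}/\eps$ and a filtering $\tilde R_{k,\tau}:=\fe^{\tau J}R_{k,\tau}/b_k$. Using that $J$ commutes with $\fe^{\tau J}$, a direct computation shows that $(\hat P_{k,\tau},\tilde R_{k,\tau})$ solves precisely the system of Lemma \ref{lemma2}, with $\dot{\hat P}_{k,\tau}=\frac1\eps\fe^{Js/\eps}\tilde R_{k,\tau}$ and $\dot{\tilde R}_{k,\tau}=\alpha^\eps\tilde R_{k,\tau}+\beta^\eps\hat P_{k,\tau}+\gamma^\eps$. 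The crucial point is that the two singular terms are tamed: $\alpha^\eps=\frac1\eps\big(\tfrac{b(X_{k,\tau})}{b_k}-1\big)J$ is uniformly bounded in $W^{1,\infty}_\tau$ by \eqref{lemma2 result}, while the $1/\eps$ in front of $(\nabla_\bx b\cdot P_{k,\tau})JY_{k,\tau}$ is absorbed by $P_{k,\tau}=\eps\hat P_{k,\tau}$, turning that term into the bounded rank-one matrix $\beta^\eps=\frac1{b_k^2}\big(\fe^{\tau J}JY_{k,\tau}\big)\big(\nabla_\bx b(X_{k,\tau})\big)^{\!T}$; the electric-field part forms the bounded source $\gamma^\eps$. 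Boundedness of $\alpha^\eps,\beta^\eps,\gamma^\eps$ in $W^{1,\infty}_\tau$ follows from the $W^{1,\infty}_\tau$ estimates of Lemma \ref{lemma21} together with $b,\bE^\eps\in\mathcal C^2$ (one spatial derivative is consumed when differentiating the coefficients in $\tau$).

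The heart of the matter — and the step where the Chapman--Enskog preparation is indispensable — is the verification that $\hat P_{k,\tau}(0)$ and $\tilde R_{k,\tau}(0)$ are $O(1)$ in $W^{1,\infty}_\tau$. With the first-order data $X_k(0,\tau)=X_k^{1st}(\tau)$, $Y_k(0,\tau)=\bv_{k,0}$ one gets from \eqref{Xfirst} that $\partial_\tau X_k^{1st}=\frac{\eps}{b_k}\fe^{\tau J}\bv_{k,0}$, so that by \eqref{charact 2scale a}, $\partial_s X_k(0,\tau)=\frac{\fe^{\tau J}}{b_k}\bv_{k,0}-\frac1\eps\partial_\tau X_k^{1st}=0$, giving $\hat P_{k,\tau}(0)=0$; and $\partial_s Y_k(0,\cdot)=O(1)$, hence $\tilde R_{k,\tau}(0)=O(1)$, follows once more from $\frac1\eps\big(\tfrac{b(X_k^{1st})}{b_k}-1\big)=O(1)$. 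Lemma \ref{lemma2} then bounds $(\hat P_{k,\tau},\tilde R_{k,\tau})$; undoing the rescaling and filtering, and using that $\tau\mapsto\tau+s/\eps$ is an isometry of $W^{1,\infty}_\tau$, yields $(i)$.

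For $(ii)$ I would proceed in two stages. The case $\ell=1$ is identical to $(i)$, the second-order data merely making the initial residual smaller. For $\ell=2$ I differentiate \eqref{charact 2scale} twice, set up the analogous system for $\big(\partial_s^2X_k/\eps,\ \fe^{\tau J}\partial_s^2Y_k/b_k\big)$ along characteristics, and invoke Lemma \ref{lemma2} again: the source now also carries first-derivative terms such as $\frac1\eps(\nabla_\bx b\cdot\partial_s X_k)J\partial_s Y_k$, which are bounded by the $\ell=1$ estimate, while the coefficients require $b,\bE^\eps\in\mathcal C^3$. I expect the main obstacle to be the last bookkeeping step, namely checking that the prepared profiles give $\partial_s^2X_k(0,\cdot)=O(\eps)$ and $\partial_s^2Y_k(0,\cdot)=O(1)$: one must differentiate $X_k^{2nd},Y_k^{1st}$ both through the equations (in $s$) and in $\tau$, and track cancellations up to $O(\eps)$ — this is precisely the rigorous counterpart of the formal relations $\partial_s^2\bh_k=O(\eps)$, $\partial_s^2\br_k=O(\eps)$, and the reason the $X$-expansion \eqref{Xsecond} is carried one order beyond the $Y$-expansion \eqref{Yfirst}.
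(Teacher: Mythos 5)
Your proposal follows essentially the same route as the paper's proof: differentiate \eqref{charact 2scale} in $s$, pass to the characteristic variables with the $1/\eps$ rescaling of $\partial_s X_k$ and the $\fe^{\tau J}/b_k$ filtering of $\partial_s Y_k$, invoke Lemma \ref{lemma2} with the singular terms tamed exactly as you describe, and use the prepared data to check admissibility of the initial values (notably $\partial_s X_k(0,\cdot)=0$ for $(i)$, and $\partial_s^2 X_k(0,\cdot)=O(\eps)$, $\partial_s^2 Y_k(0,\cdot)=O(1)$ for $(ii)$). The only differences are cosmetic: the paper applies Lemma \ref{lemma2} with $\mathcal{H}=L^\infty_\tau$ twice (once for the unknowns, once after differentiating the system in $\tau$) rather than once in $W^{1,\infty}_\tau$, and it carries out in full the cancellation bookkeeping for $(ii)$ that you only outline.
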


\begin{proof}
The time derivatives of the unknown, denoted in this proof as $\tilde{X}_k(s, \tau):=\partial_s X_k(s, \tau),$ $\tilde{Y}_k(s, \tau):=\partial_s Y_k(s, \tau)$ satisfy
\begin{align}
\partial_s \tilde{X}_k+\frac{1}{\eps}\partial_\tau \tilde{X}_k=&\frac{\fe^{\tau J}}{b_k}\tilde{Y}_k, \label{xtilde_s}\\
\partial_s \tilde{Y}_k+\frac{1}{\eps}\partial_\tau \tilde{Y}_k=&\frac{1}{\eps}\left(\frac{b(X_k)}{b_k}-1\right)J\tilde{Y}_k+\frac{1}{\eps b_k}\nabla_\bx b(X_k)\cdot\tilde{X}_k JY_k \nonumber\\
&+\frac{\fe^{-J\tau}}{b_k}\left(\frac{\partial_t\bE^\eps(s/b_k,X_k)}{b_k}+\nabla_\bx\bE^\eps(s/b_k,X_k)\tilde{X}_k\right).\label{ytilde_s}
\end{align}
With $X_k(0,\tau)=X_k^{1st}(\tau)$ (given by (\ref{ini 1st})-\eqref{Xfirst}) and $Y_k(0,\tau)=\bv_{k,0}$
and using equation (\ref{charact 2scale}), we find the following initial data for the previous system
\begin{align}
\partial_s X_k(0, \tau)&=\tilde{X}_k(0,\tau)=-\frac{1}{\eps}\partial_\tau X_k^{1st}( \tau)+\frac{\fe^{\tau J}}{b_k}\bv_{k,0}\nonumber\\
\label{dsX_0}
&=-\frac{\fe^{\tau J}}{b_k}\bv_{k,0}+\frac{\fe^{\tau J}}{b_k}\bv_{k,0}=0,\\
\partial_s Y_k(0, \tau)&=\tilde{Y}_k(0,\tau)=-\frac{1}{\eps}\partial_\tau \bv_{k,0}+\frac{1}{\eps}\left(\frac{b(X_k^{1st})}{b_k}-1\right)J\bv_{k,0}+\fe^{-J\tau}\frac{\bE^\eps(0,X_k^{1st})}{b_k}, \nonumber\\
\label{dsY_0}
&\hspace{-1.59cm}=-\left[ \frac{\nabla_\bx b(\bx_{k,0})}{b_k}\cdot J(\fe^{\tau J}-I)\bv_{k,0}+O(\eps) \right]J\bv_{k,0} +\fe^{-J\tau}\frac{\bE^\eps(0,X_k^{1st})}{b_k}=O(1).
\end{align}
Now we fix $\tau\in\bT$ as a parameter and together with \eqref{xktau}, we define
$$
\tilde{X}_{k,\tau}(s):=\frac{1}{\eps}\tilde{X}_k\left(s,\frac{s}{\eps}+\tau\right),\quad
\tilde{Y}_{k,\tau}(s):=\frac{e^{\tau J}}{b_k}\tilde{Y}_k\left(s,\frac{s}{\eps}+\tau\right),
$$
which solves
\begin{align*}
\dot{ \tilde{X}}_{k,\tau}=&\frac{\fe^{J s/\eps}}{\eps}\tilde{Y}_{k,\tau}, \\
\dot{\tilde{Y}}_{k,\tau}=&\frac{1}{\eps}\left(\frac{b(X_{k,\tau})}{b_k}-1\right)J\tilde{Y}_{k,\tau}+\frac{e^{J\tau}}{b_k^2}\nabla_\bx b(X_{k,\tau})\cdot\tilde{X}_{k,\tau} JY_{k,\tau}\\
&+\frac{\fe^{-J s/\eps}}{b_k^2}\left(\frac{\partial_t\bE^\eps(s/b_k,X_{k,\tau})}{b_k}+\eps\nabla_\bx\bE^\eps(s/b_k,X_{k,\tau})\tilde{X}_{k,\tau}\right).
\end{align*}
We can apply Lemma \ref{lemma2} with $\mathcal{H} = L_\tau^{\infty}([0, 2\pi])$
since all the assumptions of this  lemma are fullfilled. Indeed, $\tilde{X}_{k,\tau}(0)$ and $\tilde{Y}_{k,\tau}(0)$ are uniformly bounded in $\eps$, the functions ${X}_{k,\tau}$ and ${Y}_{k,\tau}$ enjoy some boundedness properties (thanks to Lemma  \ref{lemma21}) and the functions $b$ and $E$ are smooth. This enables to derive the following estimate
$$
\frac{1}{\eps}\|\partial_s X_k\|_{L^\infty_s ( L^\infty_\tau)}+\|\partial_sY_k\|_{L_s^\infty ( L^{\infty}_\tau)}\leq C_0.
$$

We now deal with $W_\tau^{1, \infty}$
estimates. This is done by differentiating the above system with respect to $\tau$. The so-obtained system
is still of the form of the Lemma \ref{lemma2} and we have to check that the initial data remains bounded.
Clearly the $\tau$-derivative of $\tilde{X}_k(0, \tau)$ is equal to zero ;
concerning the $\tau$-derivative of $\tilde{Y}_k(0, \tau)$, we have
$$
\partial_\tau \tilde{Y}_k(0,\tau)=\frac{1}{\eps b_k} \nabla_\bx b(X_k^{1st})\cdot \partial_\tau X_k^{1st} J\bv_{k,0}+ \partial_\tau \left(\fe^{-J\tau}\frac{\bE^\eps(0,X_k^{1st})}{b_k}\right).
$$
From the definition \eqref{ini 1st}-\eqref{Xfirst} of  $X^{1st}_k(0, \tau)$, we have $\partial_\tau X_k^{1st} =\frac{\eps}{b_k} e^{\tau J} \bv_{k,0}$
so that $\partial_\tau \tilde{Y}_k(0,\tau) = O(1)$. Then,  using Lemma \ref{lemma2} with $\mathcal{H} = L^{\infty}_\tau([0, 2\pi])$ ends the proof of $(i)$.

Let us now prove $(ii)$. We denote $\check{X}_k:=\partial_s^2 X_k,\; \check{Y}_k:=\partial_s^2 Y_k$
which are solutions of the following system
\begin{align*}
\partial_s \check{X}_k+\frac{1}{\eps}\partial_\tau \check{X}_k=&\frac{\fe^{\tau J}}{b_k}\check{Y}_k, \\
\partial_s \check{Y}_k+\frac{1}{\eps}\partial_\tau \check{Y}_k=&\frac{1}{\eps}\left(\frac{b(X_k)}{b_k}-1\right)J\check{Y}_k+\frac{2}{\eps b_k}\nabla_\bx b(X_k)\cdot\tilde{X}_k J\tilde{Y}_k\\
&+\frac{1}{\eps b_k}\nabla_\bx b(X_k)\cdot\check{X}_k JY_k+\frac{1}{\eps b_k}\tilde{X}_k^T\nabla_\bx^2 b(X_k)\tilde{X}_k JY_k\\
&+\frac{\fe^{-J\tau}}{b_k}\left(\frac{\partial_t^2\bE^\eps(s/b_k,X_k)}{b_k^2}+\frac{2}{b_k}\nabla_\bx\partial_t\bE^\eps(s/b_k,X_k)\tilde{X}_k\right)\\
&+\frac{\fe^{-J\tau}}{b_k}\left(\tilde{X}_k^T\nabla_\bx^2\bE^\eps(s/b_k,X_k)\tilde{X}_k+\nabla_\bx\bE^\eps(s/b_k,X_k)\check{X}_k\right).
\end{align*}
In order to apply Lemma \ref{lemma2}, we first check that the initial data for this system is uniformly bounded (in $\eps$). To do so, we use the system \eqref{xtilde_s}-\eqref{ytilde_s} at $s=0$ to obtain
\begin{equation}
\label{ds2X_0}
\check{X}_k(0,\tau)=\partial_s \tilde{X}_k(0, \tau) = -\frac{1}{\eps}\partial_\tau \tilde{X}_k(0,\tau)+\frac{\fe^{\tau J}}{b_k} \tilde{Y}_k(0,\tau).
\end{equation}
Firstly, we find
\begin{eqnarray*}
\partial_s X_k(0, \tau) &=& \tilde{X}_k(0,\tau)=-\frac{1}{\eps}\partial_\tau X_k^{2nd}(\tau)+\frac{\fe^{\tau J}}{b_k}Y_k^{1st}(\tau)\nonumber\\
&=&-\frac{1}{\eps}\partial_\tau X_k^{1st}( \tau)+\frac{\fe^{\tau J}}{b_k}v_{k,0} + O(\eps),
\end{eqnarray*}
which leads to $\tilde{X}_k(0,\tau)=O(\eps)$ thanks to \eqref{dsX_0},
so that we deduce $\frac{1}{\eps}\partial_\tau \tilde{X}_k(0,\tau) = O(1)$.
Concerning the second term in \eqref{ds2X_0}, we look at $\tilde{Y}_k(0, \tau)$
\begin{align*}
&\partial_s Y_k(0, \tau) =\tilde{Y}_k(0,\tau)=-\frac{1}{\eps}\partial_\tau Y_k^{1st}+\frac{1}{\eps}B_k(X_k^{2nd})JY_k^{1st}
+\frac{\fe^{-J\tau}}{b_k}\bE^\eps(0,X_k^{2nd})\\
&=\frac{1}{\eps}B_k(X_k^{2nd})JY_k^{1st} + O(1)=\frac{1}{\eps b_k} \nabla_\bx b(\bx_{k,0}) \cdot  (X_k^{2nd} - \bx_{k,0})JY_k^{1st}+ O(1) = O(1),
\end{align*}
with $B_k(X)=\frac{b(X)}{b_k}-1$. This enables to prove that $\check{X}_k(0, \tau)$ given by \eqref{ds2X_0} is uniformly bounded.

We now focus on $\check{Y}_k(0, \tau)$ which is given by
\begin{align*}
\partial_s^2 Y_k(0, \tau) =  \check{Y}_k(0,\tau)
=&\frac{1}{\eps}B_k(X_k^{2nd})J\tilde{Y}_k(0,\tau)+\frac{1}{\eps b_k}\nabla_\bx b(X_k^{2nd})\cdot\tilde{X}_k(0,\tau) JY_k^{1st}\\
&
+\frac{\fe^{-J\tau}}{b_k}\left(\frac{\partial_t\bE^\eps(0,X_k^{2nd})}{b_k}+\nabla_\bx\bE^\eps(0,X_k^{2nd})\tilde{X}_k(0,\tau)\right)-\frac{1}{\eps}\partial_\tau \tilde{Y}_k(0,\tau)\\
=&-\frac{1}{\eps}\partial_\tau \tilde{Y}_k(0,\tau)+O(1).
\end{align*}
Let us now look at $\partial_\tau \tilde{Y}_k(0,\tau)$. First, we have
$$
\tilde{Y}_k(0, \tau) = \partial_s Y_k(0, \tau) = -\frac{1}{\eps}\partial_\tau Y_k^{1st} +
\frac{1}{\eps}B_k(X^{2nd}_k) JY_k^{1st} +\frac{\fe^{-\tau J}}{b_k}\bE^\eps(0, X_k^{2nd}).
$$
We want to prove that $\tilde{Y}_k(0, \tau)=O(\eps)$
to ensure $\check{Y}_k(0, \tau)=O(1)$. Using \eqref{Yfirst}, we have
$$
\partial_\tau Y_k^{1st}(0, \tau) =B_k(X_k^{1st}) J\bv_{k,0} +\eps \frac{\fe^{-\tau J}}{b_k}\bE^\eps(0, \bx_{k,0}).
$$
Injecting this last identity in the expression of $\tilde{Y}_k(0, \tau)$ leads to
\begin{eqnarray*}
\tilde{Y}_k(0, \tau) &=& -\frac{1}{\eps} \left[B_k(X_k^{1st}) J\bv_{k,0} +\eps \frac{\fe^{-\tau J}}{b_k}\bE^\eps(0, \bx_{k,0})\right] +
\frac{1}{\eps}B_k(X^{2nd}_k) JY_k^{1st} +\frac{\fe^{-\tau J}}{b_k}\bE^\eps(0, X_k^{2nd})\nonumber\\
&=& -\frac{1}{\eps} B_k(X_k^{1st}) J\bv_{k,0} +
\frac{1}{\eps}B_k(X^{1st}_k+ O(\eps^2)) J (\bv_{k,0} + O(\eps)) +O(\eps)\nonumber\\
&=& O(\eps).
\end{eqnarray*}
Hence we have $\check{Y}_k(0,\tau)=O(1)$.

Again, considering new unknown $\frac{1}{\eps}\check{X}_k(s, \frac{s}{\eps} + \tau)$ and
$\frac{e^{\tau J}}{b_k}\check{Y}_k(s, \frac{s}{\eps} + \tau)$ enables to recast the previous system
so that, using the previous estimates, we can use Lemma \ref{lemma2} with $\mathcal{H} = L^{\infty}_\tau([0, 2\pi])$
provided that the initial data $\frac{1}{\eps}\check{X}_k(0, \tau)=O(1)$. Then, we compute
$$
\frac{1}{\eps}\check{X}_k(0, \tau)=-\frac{1}{\eps^2}\partial_\tau \tilde{X}_k(0, \tau) + \frac{1}{\eps b_k}\fe^{\tau J}\tilde{Y}(0, \tau) =
-\frac{1}{\eps^2}\partial_\tau \tilde{X}^{2nd}_k + \frac{1}{\eps b_k}\fe^{\tau J}\tilde{Y}^{1st}.
$$
We focus on the first term
\begin{eqnarray*}
-\frac{1}{\eps^2}\partial_\tau \tilde{X}^{2nd}_k
&=& -\frac{1}{\eps^2 b_k}\partial_\tau (\fe^{\tau J} Y^{1st}) +\frac{1}{\eps^2 b_k}\fe^{\tau J} \Pi B_k(X^{1st}) J\bv_{k,0} +\frac{1}{\eps^2 b_k}\partial_\tau (\fe^{\tau J} Y^{1st})\nonumber\\
&=& -\frac{1}{\eps^2 b_k} \fe^{\tau J} \Pi B_k(X^{1st}) J\bv_{k,0}.
\end{eqnarray*}
Then, we compute the second term
\begin{eqnarray*}
\frac{1}{\eps b_k}\fe^{\tau J}\tilde{Y}^{1st} &=& \frac{1}{\eps b_k}\fe^{\tau J}\Big[ -\frac{1}{\eps}\partial_\tau Y^{1st} +\frac{1}{\eps} B(X^{2nd}) JY^{1st} +\frac{1}{b_k}\fe^{-\tau J} \bE(0, X^{2nd})   \Big]\nonumber\\
&=&-\frac{1}{\eps^2 b_k}\fe^{\tau J} \Big[ (I-\Pi)B(X^{1st}) J\bv_{k,0} +\frac{\eps}{b_k} \fe^{-\tau J}\bE(0, \bx_{k,0})\Big] \nonumber\\
&&+\frac{1}{\eps^2 b_k}\fe^{\tau J}B(X^{2nd})JY^{1st} +\frac{1}{\eps b_k^2}\bE(0, X^{2nd})\nonumber\\
&=& \frac{1}{\eps^2 b_k}\fe^{\tau J} \Big[ -(I-\Pi) B(X^{1st}) J\bv_{k,0} + B(X^{2nd}) JY^{1st}\Big]+ \frac{1}{\eps b_k^2}(\bE(0, X^{2nd}) - \bE(0, \bx_{k,0}))\nonumber\\
&=& \frac{1}{\eps^2 b_k}\fe^{\tau J} \Big[ -(I-\Pi) B(X^{1st}) J\bv_{k,0} + B(X^{2nd}) JY^{1st}\Big] + O(1).
\end{eqnarray*}
Gathering the two term leads to
$$
\frac{1}{\eps}\check{X}_k(0, \tau) = -\frac{1}{\eps^2 b_k}\Big[-B(X^{1st}) J\bv_{k,0} +B(X^{2nd}) J Y^{1st}    \Big] = \frac{B(X^{1st})}{\eps^2 b_k}J (\bv_{k,0} -Y^{1st}) + O(1) = O(1),
$$
since $X^{2nd} = X^{1st} + O(\eps^2)$.
Similar computations for $\partial_\tau \check{X}_k$ and $\partial_\tau \check{Y}_k$ leads to the required
estimate.

\end{proof}

\subsection{Numerical illustrations} \label{sect:ni}

To end this section, we illustrate the effect of the preparation of the initial data
on the behaviour of the time derivative of $X_k$ and $Y_k$. To do so,
we consider an example of a single particle (we then omit subscript $k$)
characteristics (\ref{charact}) with initial condition
$$
\bx_0=\binom{10}{1.3},\quad \bv_0=\binom{4.9}{-2.1},
$$
and
\begin{align}
&b(\bx)=1+\sin(x_1)\sin(x_2)/2,\quad E^\eps(t,\bx)=\binom{E_1(\bx)}{E_2(\bx)}(1+\sin(t)/2),\\
&
E_1(\bx)=\cos(x_1/2)\sin(x_2)/2,\quad E_2(\bx)=\sin(x_1/2)\cos(x_2).\nonumber
\end{align}
In Figures \ref{fig:norm1st}, we plot the time history of $\partial_s X,\, \partial_sY,\,\partial_s^2X$ and $\partial_s^2Y$
(by accurate numerical solver) under norm
$$
\|X(t,\cdot)\|_{L^2_\tau}:=\frac{1}{2}\left(\|X_1(t,\cdot)\|_{L^2_\tau}+\|X_2(t,\cdot)\|_{L^2_\tau}\right),\quad X=\binom{X_1}{X_2},
$$
for different initial data (first order initial data $(X^{1st}, \bv_{0})$ with $X^{1st}$ given by \eqref{Xfirst}
and  second order initial data $(X^{2nd}, Y^{1st})$ with $X^{2nd}$ given by \eqref{Xsecond} and $Y^{1st}$
given by \eqref{Yfirst}). The numerical results confirm the results of Proposition \ref{prop:1st}.


\begin{figure}[t!]
$$\begin{array}{cc}
\psfig{figure=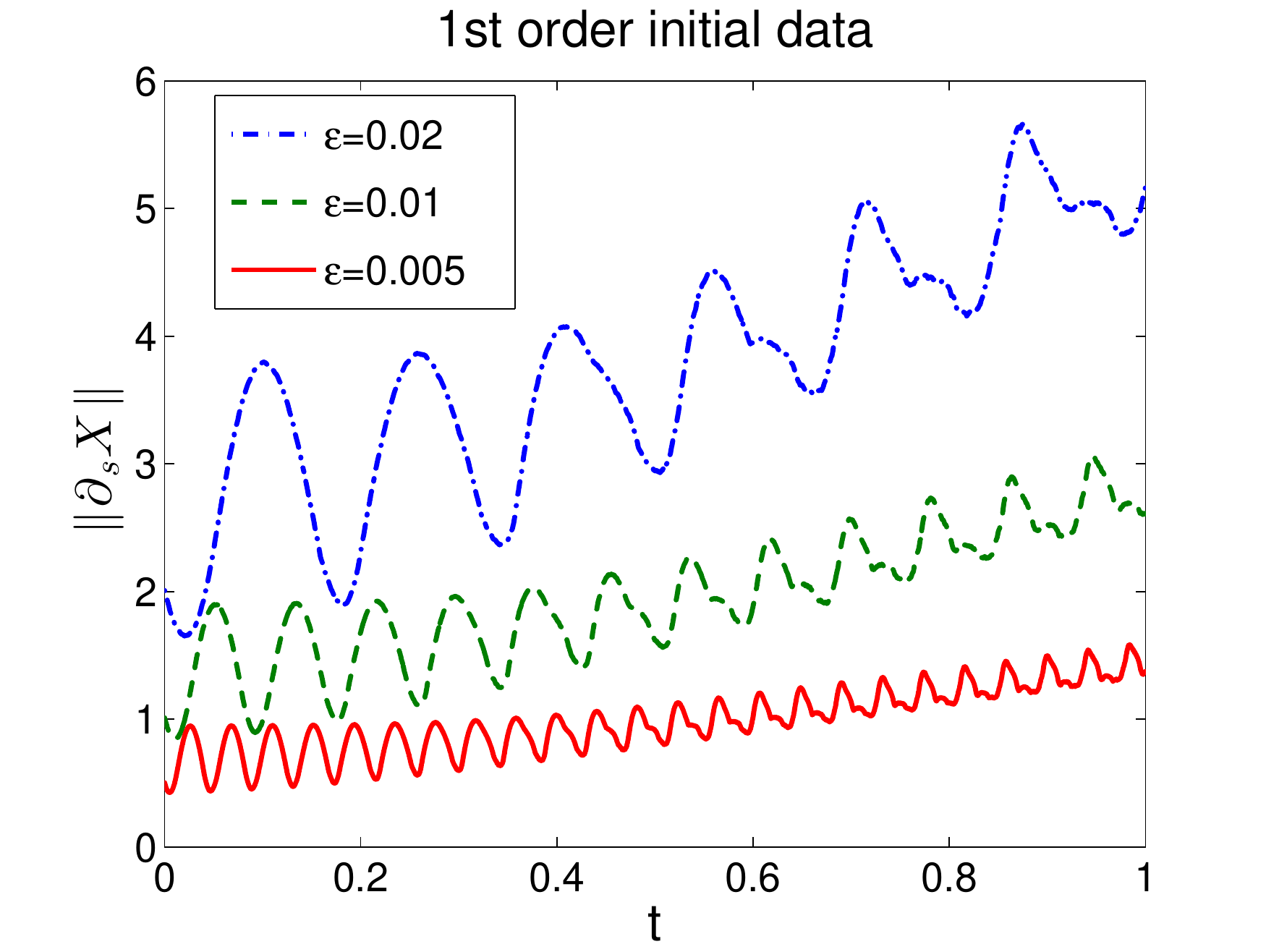,height=4cm,width=5cm}&\psfig{figure=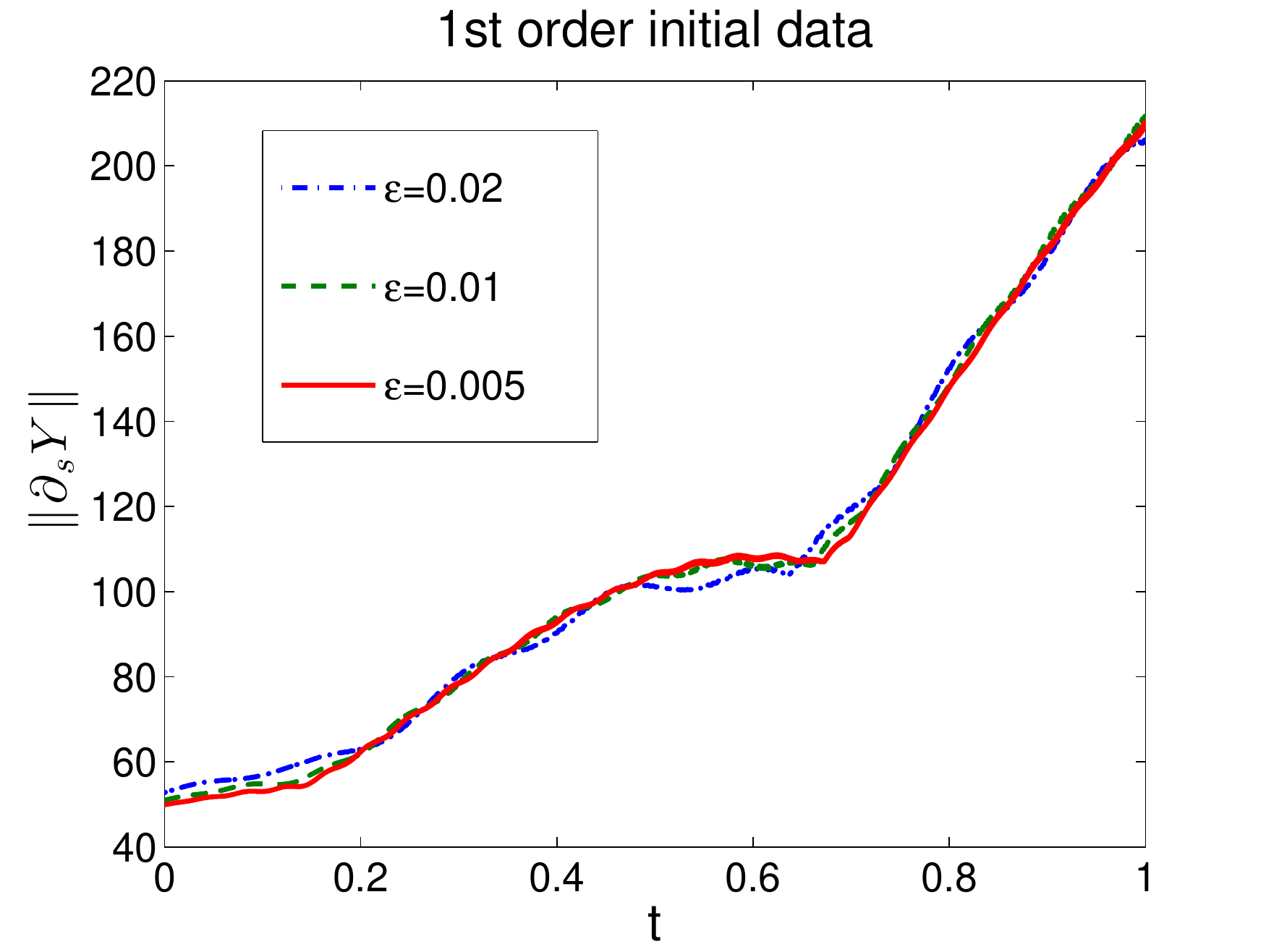,height=4cm,width=5cm}\\
\psfig{figure=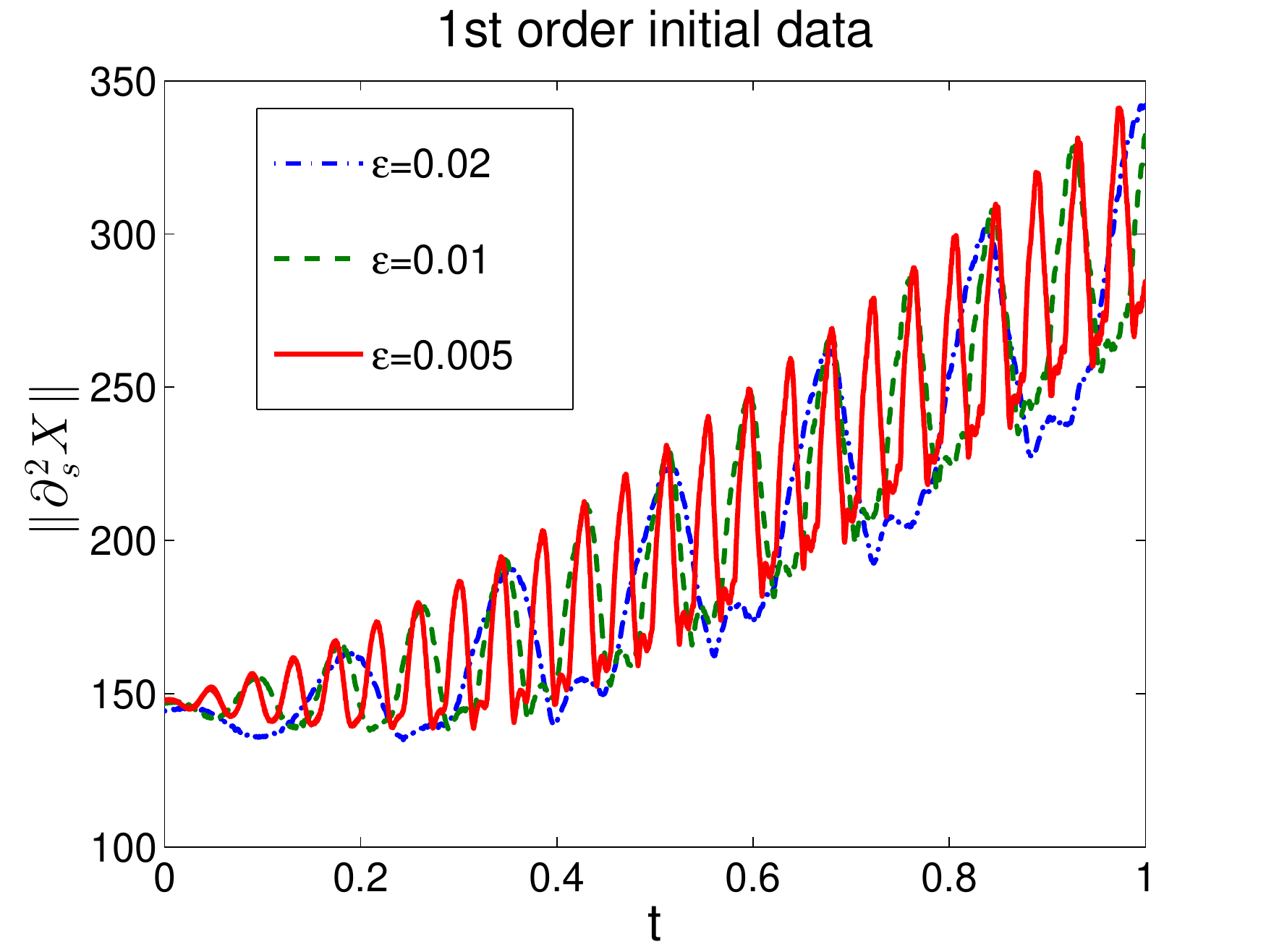,height=4cm,width=5cm}&\psfig{figure=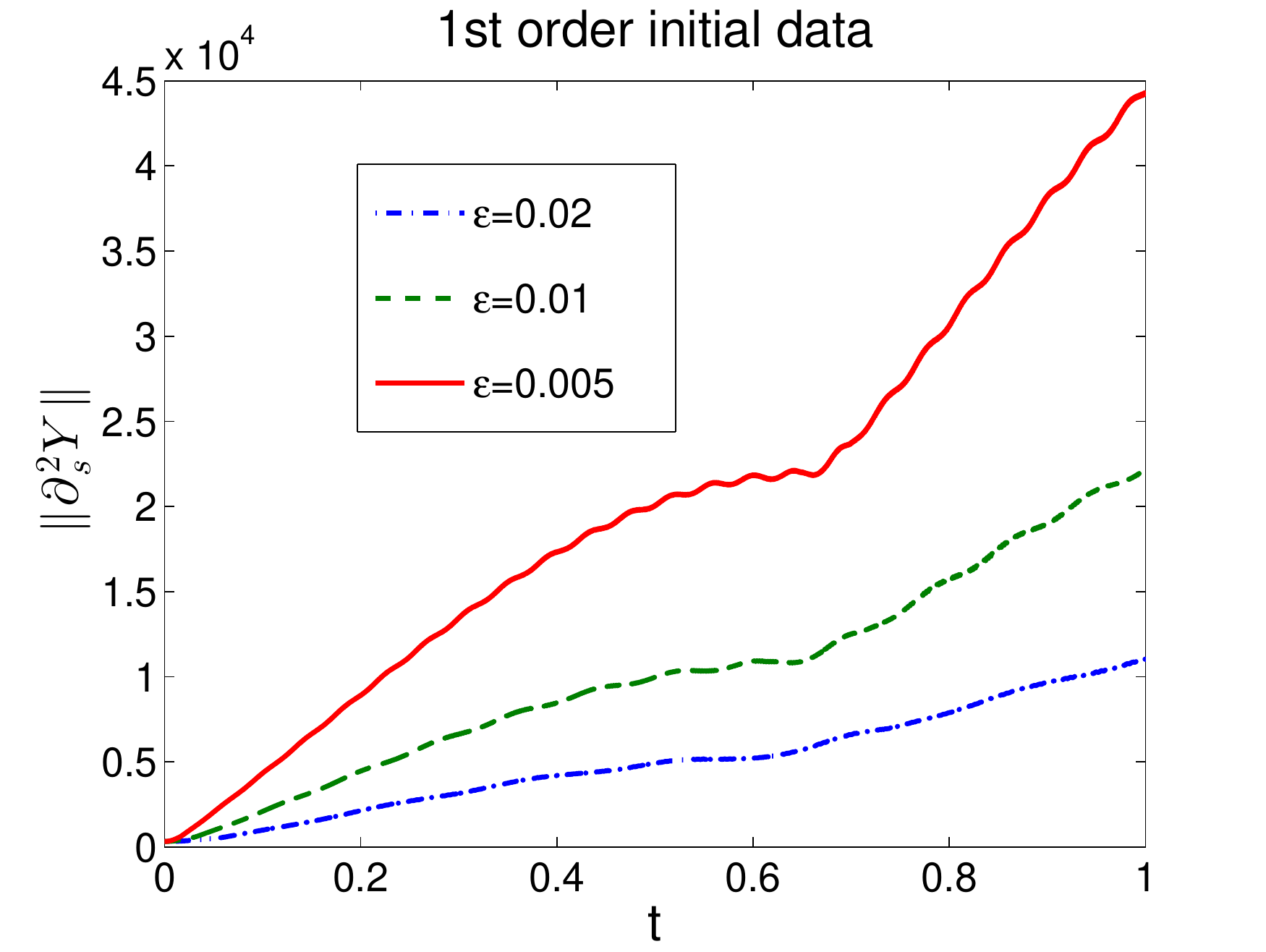,height=4cm,width=5cm}\\
\psfig{figure=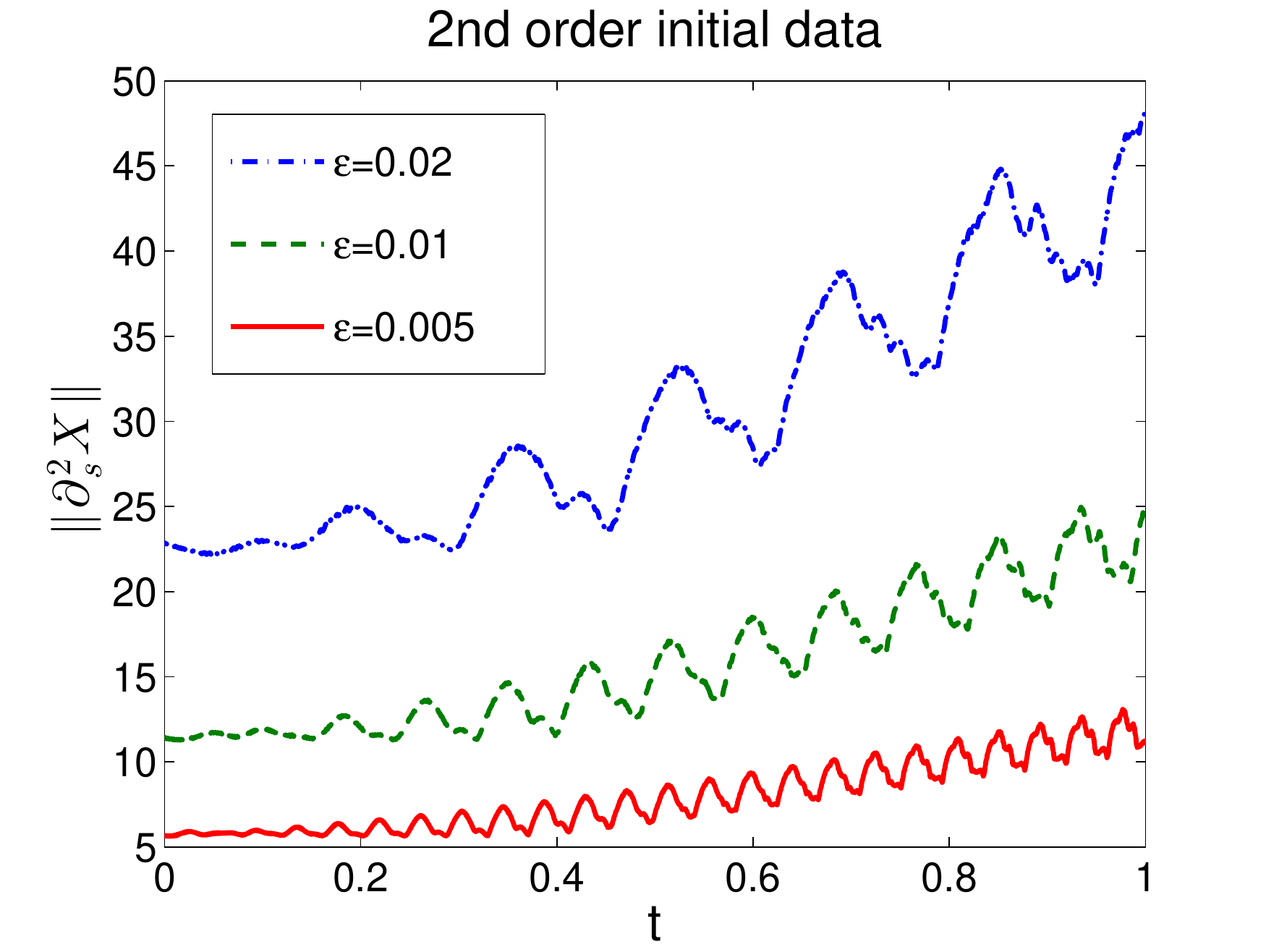,height=4cm,width=5cm}&\psfig{figure=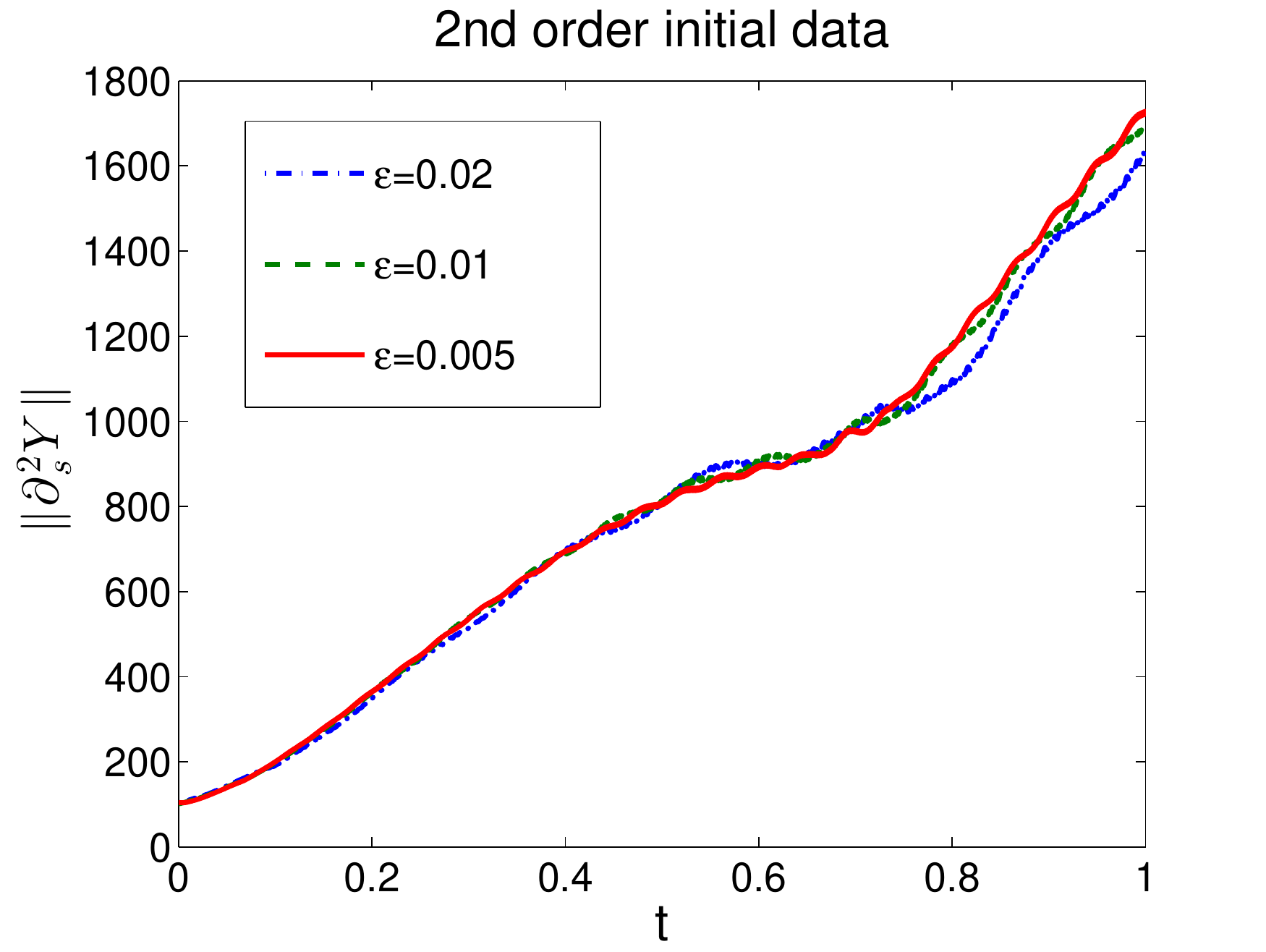,height=4cm,width=5cm}
\end{array}$$
\caption{Time history of the norm of $\partial_sX,\, \partial_sY$ (first row) and $\partial_s^2X,\, \partial_s^2Y$ (second and third rows) under first order initial data $(X^{1st}, \bv_{0})$ and respectively second order initial data
$(X^{2nd}, Y^{1st})$.}\label{fig:norm1st}
\end{figure}

\section{Numerical method} \label{sect:nm}
This section is devoted to the construction of numerical schemes
 for the two-scaled system (\ref{charact 2scale}).
We will perform the analysis of a first order numerical scheme:
we will prove that this numerical scheme enjoy the uniform accuracy property
with respect to $\eps$. In addition, we will prove that the scheme
is able to reproduce the confinement property \eqref{lemma2_conf}
at the discrete level. Then, we will propose a strategy to reach the second order accuracy and to handle the coupling with Poisson equation.

Let $\Delta t>0$ be the time step and denote $t_n=n\Delta t$ for $n\geq0$ as the descretisation of the $t$-variable. For each particle $1\leq k\leq N_p$, the discretisation of the scaled time $s$-variable is consequently as
$$\Delta s_k=b_k\Delta t,\quad s_{k}^n=n\Delta s_k,\quad n=0,1,\ldots.$$
We certainly omit this subscript $k$ for brevity, i.e. $\Delta s=\Delta s_k,\ s_{n}=s^n_{k}.$ Denote the numerical solution as
$$X_k^n(\tau)\approx X_k(s_n,\tau),\quad Y_k^n(\tau)\approx Y_k(s_n,\tau),\quad n=0,1,\ldots,$$
and choose $X_k^0(\tau)=X_k(0,\tau),\,Y_k^0(\tau)=Y_k(0,\tau)$.

\subsection{First order numerical scheme}
A first order implicit-explicit (IMEX1) finite difference scheme reads for $n\geq0,$
\begin{subequations}\label{fd1}
\begin{align}
&\frac{X_k^{n+1}-X_k^n}{\Delta s}+\frac{1}{\eps}\partial_\tau X_k^{n+1}=\frac{\fe^{\tau J}}{b_k}Y_k^{n+1},\\
&\frac{Y_k^{n+1}-Y_k^n}{\Delta s}+\frac{1}{\eps}\partial_\tau Y_k^{n+1}=
\frac{1}{\eps}\left(\frac{b(X_k^n)}{b_k}-1\right)JY^n_k+\fe^{-J\tau}\frac{\bE^\eps(t_{n},X_k^{n})}{b_k}.
\end{align}
\end{subequations}
In the Fourier space in $\tau$, the above scheme is easily diagonalized. By discretizing the $\tau$-direction as
$\tau_j=j\Delta \tau, j=0, \dots, N_\tau$ with $\Delta \tau=2\pi/N_\tau$, $N_\tau$ being some positive even integer,
one can use the Fourier transform in $\tau$ to get a fully discretized scheme. By doing so, let us remark that
the IMEX scheme \eqref{fd1} is explicit from a computational point of view and the error in $\tau$ is uniformly
(with respect to $\eps$) spectrally uniform.

By assuming that $\bE^\eps(t, \bx)$ and $b(\bx)$ are smooth given functions, we analyse the first order IMEX scheme \eqref{fd1}
for which we have the following uniform convergence results.
\begin{theorem}\label{thm1}
Under the assumptions in Proposition \ref{prop:1st} for the two-scaled system (\ref{charact 2scale}),
consider the first order IMEX1 scheme (\ref{fd1}) for solving (\ref{charact 2scale}) with the second order initial data $X_k^0(\tau)=X_k^{2nd}(\tau)$ given by \eqref{Xsecond} and $Y_k^0(\tau)=Y_k^{1st}(\tau)$ given by \eqref{Yfirst}. There exist constants $C_0,C_1,C_2>0$ independent of $\eps$, such that when $0<\Delta s\leq C_0$  we have
\begin{equation}
\label{thm1error}
\frac{1}{\eps}\|X_k^n-X_k(s_n,\cdot)\|_{L^\infty_\tau}+\|Y_k^n-Y_k(s_n,\cdot)\|_{L^\infty_\tau}\leq C_1\Delta s,\quad 0\leq n\leq\frac{S_k}{\Delta s},\end{equation}
and
\begin{subequations}\label{bound}
\begin{align}
&\|X_k^n-\bx_{k,0}\|_{L^\infty_\tau}\leq C_2 \eps,\\
& \|Y_k^n\|_{L^\infty_\tau}\leq \|Y_k\|_{L^\infty_s ( L^\infty_\tau)}+1,\quad 0\leq n\leq\frac{S_k}{\Delta s}.
\end{align}
\end{subequations}
\end{theorem}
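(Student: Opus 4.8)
The plan is to follow the classical consistency-plus-stability route, but carried out entirely in the $L^\infty_\tau$ framework and with the stiff $1/\eps$ contributions handled by the implicit solve rather than bounded brute-force. Write $e_X^n := X_k^n - X_k(s_n,\cdot)$ and $e_Y^n := Y_k^n - Y_k(s_n,\cdot)$, where $X_k,Y_k$ denote the exact solution of \eqref{charact 2scale} with the \emph{same} prepared initial data \eqref{Xsecond}, \eqref{Yfirst}; hence $e_X^0=e_Y^0=0$. The natural quantity to propagate is the combined error $\mathcal E^n := \frac1\eps\|e_X^n\|_{L^\infty_\tau}+\|e_Y^n\|_{L^\infty_\tau}$, the $1/\eps$ weighting on $e_X$ being dictated both by the target estimate \eqref{thm1error} and by the fact that the nonlinearity $\frac1\eps(b(X_k)/b_k-1)$ differentiates to a factor $\frac1\eps\nabla b\cdot e_X$. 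The key structural object is the implicit operator $R:=(I+\tfrac{\Delta s}{\eps}\partial_\tau)^{-1}$: in Fourier it is the multiplier $(1+i\ell\Delta s/\eps)^{-1}$, equivalently convolution on $\bT$ with a nonnegative kernel of unit mass, so that $R$ is a contraction on $L^\infty_\tau$ (indeed on every $L^p_\tau$), uniformly in $\eps,\Delta s$; moreover $\Delta s\,\tfrac1\eps\partial_\tau R = I-R$ is bounded by $2$ on $L^\infty_\tau$.

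For consistency I would insert the exact solution into \eqref{fd1}. For the $X$-line, using the exact equation at $s_{n+1}$ the defect reduces to $\frac{X_k(s_{n+1})-X_k(s_n)}{\Delta s}-\partial_s X_k(s_{n+1})=-\tfrac{\Delta s}{2}\partial_s^2 X_k(\xi)$, whose $L^\infty_\tau$-norm is $O(\eps\Delta s)$ by the bound on $\tfrac1\eps\partial_s^2X_k$ in Proposition~\ref{prop:1st}$(ii)$; thus its contribution to $\tfrac1\eps\mathcal E$ is $O(\Delta s)$. For the $Y$-line the defect splits as $\big[\tfrac{Y_k(s_{n+1})-Y_k(s_n)}{\Delta s}-\partial_s Y_k(s_n)\big]+\tfrac1\eps\partial_\tau[Y_k(s_{n+1})-Y_k(s_n)]$; the first bracket is $O(\Delta s)$, but the second carries the dangerous $1/\eps$. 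The point is that it never appears alone: after applying $R$ and using $\Delta s\,\tfrac1\eps\partial_\tau R=I-R$ it becomes $(I-R)[Y_k(s_{n+1})-Y_k(s_n)]$, a telescoping difference which, summed against powers of $R$ through a discrete summation-by-parts, collapses to boundary terms plus second differences of $Y_k$; with $\|\partial_s Y_k\|$ and $\|\partial_s^2 Y_k\|$ controlled by Proposition~\ref{prop:1st} this is $O(\Delta s)$ uniformly.

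For the stability estimate I subtract the exact-inserted scheme from \eqref{fd1} and apply $R$. On the $Y$-line the contraction of $R$ gives $\|e_Y^{n+1}\|_{L^\infty_\tau}\le\|e_Y^n\|_{L^\infty_\tau}+\Delta s\|R(G^n-G^n_{\mathrm{ex}})\|_{L^\infty_\tau}+(\text{consistency})$, where $G$ denotes the right-hand side of the $Y$-equation. The difference $G^n-G^n_{\mathrm{ex}}$ is estimated by splitting the stiff term $\frac1\eps(\frac{b(X)}{b_k}-1)JY$ into a piece proportional to $e_Y^n$ with prefactor $\frac1\eps|\frac{b(X_k^n)}{b_k}-1|$ and a piece proportional to $\frac1\eps(b(X_k^n)-b(X_k(s_n)))$; the Lipschitz bound \eqref{Lips} and the boundedness of $Y_k$ (Lemma~\ref{lemma21}) then yield $\|G^n-G^n_{\mathrm{ex}}\|_{L^\infty_\tau}\le C\mathcal E^n$, \emph{provided} the prefactor $\frac1\eps|\frac{b(X_k^n)}{b_k}-1|$ is under control, i.e. provided the discrete confinement holds. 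On the $X$-line the analogous step produces the coupling term $\frac{\Delta s}{b_k}R(\fe^{\tau J}e_Y^{n+1})$; bounding it naively would cost a spurious $1/\eps$ in $\frac1\eps\|e_X\|$, so instead I would unroll the recursion and perform a discrete summation-by-parts in $n$ on $\sum_m R^{\,n+1-m}(\fe^{\tau J}e_Y^{m+1})$, the exact discrete analogue of the integration by parts used in Lemma~\ref{lemma1} and Lemma~\ref{lemma2}: the oscillation $\fe^{\tau J}$ (whose $\tau$-average vanishes) together with the geometric decay of $R$ on nonzero Fourier modes converts the $1/\eps$ into $O(1)$ factors, at the price of terms involving $\|e_Y^{m+1}-e_Y^m\|$, themselves controlled by $\Delta s\,\mathcal E^m$. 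Collecting the two lines yields $\mathcal E^{n+1}\le(1+C\Delta s)\mathcal E^n+C\Delta s^2$, and a discrete Gronwall inequality gives $\mathcal E^n\le C_1\Delta s$, which is \eqref{thm1error}.

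It remains to secure the confinement used above, and this is where the nonlinear, non-smooth term forces a bootstrap. I would run the whole estimate as an induction on $n$: assuming $\mathcal E^m\le1$ for $m\le n$ gives, via $\|X_k^m-\bx_{k,0}\|_{L^\infty_\tau}\le\|e_X^m\|_{L^\infty_\tau}+\|X_k(s_m)-\bx_{k,0}\|_{L^\infty_\tau}\le\eps\,\mathcal E^m+C\eps$ and the continuous confinement \eqref{lemma2_conf}, the discrete bound $\|X_k^m-\bx_{k,0}\|_{L^\infty_\tau}\le C_2\eps$, hence $\frac1\eps|\frac{b(X_k^m)}{b_k}-1|\le \frac{C_b C_2}{b_k}$ by \eqref{Lips}; this validates the constants in the Gronwall step and yields $\mathcal E^{n+1}\le C_1\Delta s\le1$ for $\Delta s\le C_0$ small, closing the induction and proving the first bound in \eqref{bound}. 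The second bound in \eqref{bound} then follows from $\|Y_k^n\|_{L^\infty_\tau}\le\|Y_k\|_{L^\infty_s(L^\infty_\tau)}+\mathcal E^n\le\|Y_k\|_{L^\infty_s(L^\infty_\tau)}+1$ once $\Delta s\le C_0$. The main obstacle, as anticipated in the introduction, is precisely the simultaneous treatment of the stiff oscillatory coupling $\frac1\eps\fe^{\tau J}e_Y$ on the $X$-line and of the non-smooth nonlinearity $\frac1\eps(b(X)/b_k-1)$: the former is defused by the $L^\infty_\tau$-contraction of the implicit solve combined with discrete summation-by-parts, the latter by the confinement bootstrap that keeps the stiff prefactor bounded.
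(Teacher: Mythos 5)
Your overall architecture --- truncation error, error recursion, an induction bootstrap enforcing discrete confinement so that the stiff prefactor $\frac1\eps|b(X_k^n)/b_k-1|$ stays bounded, then discrete Gronwall --- matches the paper's, and several ingredients are sound: $R=(I+\frac{\Delta s}{\eps}\partial_\tau)^{-1}$ is indeed an $L^\infty_\tau$-contraction with $\|I-R\|\le 2$, and your bootstrap closing under $\Delta s\le C_0$ mirrors the paper's induction hypothesis \eqref{ineq_hyp}. The genuine gap is in the one step that produces the theorem's $1/\eps$-weighted estimate, namely the claim that summation by parts in $n$ on $\Delta s\sum_m R^{n+1-m}\bigl(\fe^{\tau J}e_Y^{m+1}\bigr)$ yields $O(\eps\Delta s)$ ``at the price of terms involving $\|e_Y^{m+1}-e_Y^m\|$, themselves controlled by $\Delta s\,\mathcal{E}^m$''. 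That control is false: from the scheme, $e_Y^{m+1}-e_Y^m=(R-I)e_Y^m+\Delta s R(\cdots)$, and $(R-I)e_Y^m=-\frac{\Delta s}{\eps}\partial_\tau R\,e_Y^m$ is only $O\bigl(\frac{\Delta s}{\eps}\|\partial_\tau e_Y^m\|_{L^\infty_\tau}\bigr)$ --- a quantity your pure $L^\infty_\tau$ framework never controls (you do not propagate $\tau$-derivatives of the error), and which even under the optimistic bound $\|\partial_\tau e_Y^m\|_{L^\infty_\tau}=O(\Delta s)$ is of size $\Delta s^2/\eps$; summing $\sim 1/\Delta s$ such increments destroys the $O(\eps\Delta s)$ target in the regime $\Delta s\gtrsim\eps$, which is exactly the regime uniform accuracy is about.

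Worse, the operator mechanism itself breaks on the resonant component. Writing $R^{j}\fe^{\tau J}=\fe^{\tau J}R_J^{j}$ with $R_J=(I+\frac{\Delta s}{\eps}(\partial_\tau+J))^{-1}$, the partial sums $\Delta s\sum_{i\le k}R_J^{i}$ act as $k\Delta s=O(1)$ (not $O(\eps)$) on the kernel of $\partial_\tau+J$, i.e.\ on fields $\fe^{-\tau J}c$; these are precisely the Fourier modes $l=\mp1$ of $e_Y$ that feed the mode $l=0$ of $e_X$. On this component $R$ provides no decay and $\fe^{\tau J}$ no averaging, so ``geometric decay of $R$ on nonzero modes'' cannot rescue it, and neither Abel convention (telescoping the propagator or telescoping $e_Y$) gives the needed bound. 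In the paper the $\eps$-gain for exactly this component comes from a different source: the $Y$-recursion is first solved exactly in Fourier (Lemma \ref{lm:2}), so the history of $e_Y$ at frequency $\pm1$ is weighted by powers of $p_{\pm1}$, which do decay ($\Delta s|p_{\pm1}|\le\eps$); the resulting double geometric sum produces the coefficients $\alpha_{j,l},\beta_{j,l}$ with the uniform bound $\Delta s|\alpha_{j,l}|\le C\eps$, valid for all $l$ including the resonant pairs because $|p_l-p_{l\pm1}|=\frac{\Delta s}{\eps}|p_l|\,|p_{l\pm1}|$ never vanishes. This composition of the two discrete propagators (the $X$-one at frequency $l$, the $Y$-one at frequency $l\pm1$) is the actual discrete integration by parts; a summation by parts performed on the $X$-sum alone, with $e_Y^m$ treated as a black box, cannot reproduce it --- and this mode-by-mode cancellation is also why the paper runs the estimates in $H^1_\tau$ via Parseval and recovers $L^\infty_\tau$ by Sobolev embedding rather than working directly in $L^\infty_\tau$. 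A minor further remark: your ``dangerous'' consistency term $\frac1\eps\partial_\tau[Y_k(s_{n+1})-Y_k(s_n)]$ is an artifact of comparing with the exact equation at $s_n$; comparing at $s_{n+1}$, as in Lemma \ref{lm:1}, gives $\|\xi_Y^n\|_{W^{1,\infty}_\tau}=O(\Delta s)$ and $\|\xi_X^n\|_{W^{1,\infty}_\tau}=O(\eps\Delta s)$ outright, with no summation by parts needed there.
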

The error estimate shows that the scheme with well-prepared initial data offers super-convergence in $\bx_k$.
As a consequence, this super-convergence is also true for space dependent macroscopic quantities
such as $\rho^\eps(t, \bx)$. The estimate \eqref{bound} indicates the confinement property at the discrete level.

We are going to prove this theorem by first introducing two lemmas concerning local truncation error and error propagation.
To simplify the notations, we will always use $C$ to denote a positive constant independent of $\Delta s, n$ or $\eps$
and it could change from line to line. We shall omit the subscript $k$ from now on.

Firstly, we define the local truncation error for the IMEX scheme \eqref{fd1} as
\begin{subequations}\label{local error1}
\begin{align}
\xi_X^{n}(\tau):=&\frac{X(s_{n+1},\tau)-X(s_{n},\tau)}{\Delta s}+\frac{1}{\eps}\partial_\tau X(s_{n+1},\tau)-\frac{\fe^{\tau J}Y(s_{n+1},\tau)}{b},\label{local error1X}\\
\xi_Y^{n}(\tau):=&\frac{Y(s_{n+1},\tau)-Y(s_{n},\tau)}{\Delta s}+\frac{1}{\eps}\partial_\tau Y(s_{n+1},\tau)-\fe^{-\tau J}\frac{\bE^\eps(t_{n},X(s_{n},\tau))}{b}\nonumber\\
&-\frac{1}{\eps}\left(\frac{b(X(s_n,\tau))}{b}-1\right)JY(s_n,\tau),\quad \tau\in\bT,\quad 0\leq n\leq S_k/\Delta s. \label{local error1Y}
\end{align}
\end{subequations}
We have
\begin{lemma}\label{lm:1}
Under the assumptions of Theorem \ref{thm1}, we have
\begin{equation}
\label{error_trunc}
\frac{1}{\eps}\|\xi_X^{n}\|_{W^{1,\infty}_\tau} + \|\xi_Y^{n}\|_{W^{1,\infty}_\tau} \leq C\Delta s,\quad 0\leq n\leq S_k/\Delta s.
\end{equation}
\end{lemma}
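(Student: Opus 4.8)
The plan is to substitute the exact solution $(X,Y)$ of the two-scale system \eqref{charact 2scale} into the defining expressions \eqref{local error1X}--\eqref{local error1Y} and to use the fact that $(X,Y)$ solves \eqref{charact 2scale} \emph{at $s=s_{n+1}$} to eliminate the stiff $\frac{1}{\eps}\partial_\tau$ terms. For the $X$-component, the first equation of \eqref{charact 2scale} at $s=s_{n+1}$ lets me replace $\frac{1}{\eps}\partial_\tau X(s_{n+1},\tau)-\fe^{\tau J}Y(s_{n+1},\tau)/b$ by $-\partial_s X(s_{n+1},\tau)$, so that $\xi_X^n$ reduces to the backward-Euler consistency error $\frac{X(s_{n+1},\cdot)-X(s_n,\cdot)}{\Delta s}-\partial_s X(s_{n+1},\cdot)$. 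Writing this with the integral form of the Taylor remainder bounds it by $\Delta s\,\|\partial_s^2 X\|_{L^\infty_s(W^{1,\infty}_\tau)}$; since Proposition \ref{prop:1st}$(ii)$ gives $\frac{1}{\eps}\|\partial_s^2 X\|_{L^\infty_s(W^{1,\infty}_\tau)}\leq C_0$, this yields at once $\frac{1}{\eps}\|\xi_X^n\|_{W^{1,\infty}_\tau}\leq C\Delta s$.

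For the $Y$-component I would proceed identically, using the second equation of \eqref{charact 2scale} at $s=s_{n+1}$ to eliminate $\frac{1}{\eps}\partial_\tau Y(s_{n+1},\tau)$. After substitution, $\xi_Y^n$ splits into three pieces,
\begin{align*}
\xi_Y^n(\tau) =& \left[\frac{Y(s_{n+1},\tau)-Y(s_n,\tau)}{\Delta s}-\partial_s Y(s_{n+1},\tau)\right]
+\frac{1}{\eps}\left[B(s_{n+1},\tau)JY(s_{n+1},\tau)-B(s_n,\tau)JY(s_n,\tau)\right]\\
&+\frac{\fe^{-\tau J}}{b}\left[\bE^\eps(t_{n+1},X(s_{n+1},\tau))-\bE^\eps(t_n,X(s_n,\tau))\right],
\end{align*}
with $B(s,\tau):=b(X(s,\tau))/b-1$. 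The first bracket is again a backward-Euler consistency error, controlled by $\Delta s\,\|\partial_s^2 Y\|_{L^\infty_s(W^{1,\infty}_\tau)}\leq C\Delta s$ via Proposition \ref{prop:1st}$(ii)$. The electric-field bracket is handled by the smoothness of $\bE^\eps$: its time increment is $O(\Delta t)=O(\Delta s)$, while its space increment is bounded by $\|\nabla_\bx\bE^\eps\|_\infty\,|X(s_{n+1},\cdot)-X(s_n,\cdot)|\leq C\Delta s\,\|\partial_s X\|_{L^\infty_s(W^{1,\infty}_\tau)}=O(\eps\Delta s)$, so this piece is $O(\Delta s)$ in $W^{1,\infty}_\tau$.

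The main obstacle is the middle term, where the $1/\eps$ prefactor threatens uniformity. The key is the compensation $\partial_s(BJY)=O(\eps)$: indeed $B=O(\eps)$ by Lemma \ref{lemma21} (estimate \eqref{lemma2 result}), and $\partial_s B=\frac{1}{b}\nabla_\bx b(X)\cdot\partial_s X=O(\eps)$ because $\partial_s X=O(\eps)$ by Proposition \ref{prop:1st}, while $Y$ and $\partial_s Y$ are $O(1)$. Writing the term as $\frac{1}{\eps}\int_{s_n}^{s_{n+1}}\partial_s(BJY)(\sigma,\cdot)\,d\sigma$ then gives $\frac{1}{\eps}\cdot O(\eps)\cdot\Delta s=O(\Delta s)$, uniformly in $\eps$. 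Since all the invoked bounds on $B,\partial_s B,\partial_s X,Y,\partial_s Y$ hold in the full $W^{1,\infty}_\tau$ norm — again by Lemma \ref{lemma21} and Proposition \ref{prop:1st}, the hypotheses $b,\bE^\eps\in\mathcal{C}^3$ being exactly what is needed to control the $\tau$-differentiated chain-rule terms such as $\partial_\tau\nabla_\bx b(X)$ — the same estimates apply verbatim to $\partial_\tau\xi_Y^n$, upgrading the $L^\infty_\tau$ bounds to $W^{1,\infty}_\tau$. Collecting the three pieces yields $\|\xi_Y^n\|_{W^{1,\infty}_\tau}\leq C\Delta s$, which together with the bound on $\xi_X^n$ completes the proof.
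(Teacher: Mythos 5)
Your proof is correct and follows essentially the same route as the paper: both reduce $\xi_X^n$ and $\xi_Y^n$ by Taylor expansion combined with the exact two-scale equations at $s_{n+1}$, and both neutralize the $1/\eps$ factor in the nonlinear term through the same two compensations, namely $b(X)/b_k-1=O(\eps)$ from Lemma \ref{lemma21} and $\partial_s X=O(\eps)$ (together with $\partial_s^2 X = O(\eps)$, $\partial_s Y,\partial_s^2 Y=O(1)$) from Proposition \ref{prop:1st}$(ii)$. The only cosmetic difference is that you express the increment of the stiff term as $\frac{1}{\eps}\int_{s_n}^{s_{n+1}}\partial_s\left(BJY\right)d\sigma$, whereas the paper performs the equivalent discrete product-rule splitting $B(s_{n+1})J\left[Y(s_{n+1})-Y(s_n)\right]+\left[B(s_{n+1})-B(s_n)\right]JY(s_n)$ via separate Taylor expansions of $Y$ and $b(X)$.
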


\begin{proof}
By Taylor's expansion, we have
$$
X(s_n,\tau)=X(s_{n+1},\tau)-\Delta s\partial_sX(s_{n+1},\tau)+\Delta s^2\int_0^1\theta\partial_s^2
X(s_n+\Delta s\, \theta,\tau)d\theta.
$$
Inserting it into (\ref{local error1X}) and then making use of the equation (\ref{charact 2scale a}), we get
$$\xi_X^n(\tau)=-\Delta s \int_0^1\theta\partial_s^2
X(s_n+\Delta s\, \theta,\tau)d\theta.$$
Hence by Proposition \ref{prop:1st}, we get $\|\xi_X^n\|_{W^{1,\infty}_\tau}\leq\Delta s\|\partial_s^2
X\|_{L^\infty_s( W^{1,\infty}_\tau)}\leq C\eps\Delta s$.

Similarly for the $Y$ equation, we have Taylor's expansions,
\begin{align*}
&Y(s_n,\tau)=Y(s_{n+1},\tau)-\Delta s\partial_sY(s_{n+1},\tau)+\Delta s^2\int_0^1\theta\partial_s^2
Y(s_n+\Delta s\, \theta,\tau)d\theta,\\
&\bE^\eps(t_n,X(s_n,\tau))=\bE^\eps(t_{n+1},X(s_{n+1},\tau))-\int_0^1\Big[\Delta t\partial_t\bE^\eps(t_{n}+\Delta t\theta,X(s_{n},\tau))\\
&\qquad\qquad\qquad\qquad+\Delta s\nabla_\bx\bE^\eps(t_{n+1},X(s_{n}+\Delta s\theta,\tau))\partial_sX(s_{n}+\Delta s\theta,\tau)\Big]d\theta,\\
&b(X(s_n,\tau))=b(X(s_{n+1},\tau))-\Delta s\int_0^1
\nabla_\bx b(X(s_{n}+\Delta s\theta,\tau))\cdot\partial_sX(s_{n}+\Delta s\theta,\tau)d\theta.
\end{align*}
Plugging them into (\ref{local error1Y}), we have
\begin{align*}
\xi_Y^n(\tau)=&\partial_sY(s_{n+1},\tau)+\frac{1}{\eps}\partial_\tau Y(s_{n+1},\tau)-\fe^{-J\tau}\frac{\bE^\eps(t_{n+1},X(s_{n+1},\tau))}{b} \\
&-\frac{1}{\eps}\left(\frac{b(X(s_{n+1},\tau))}{b}-1\right)JY(s_n,\tau)-\Delta s \int_0^1\theta\partial_s^2
Y(s_n+\Delta s\theta,\tau)d\theta\\
&+\frac{\fe^{-J\tau}}{b}\int_0^1\Big[\Delta t\partial_t\bE^\eps(t_{n}+\Delta t\theta,X(s_{n},\tau))\\
&+\Delta s\nabla_\bx\bE^\eps(t_{n+1},X(s_{n}+\Delta s\theta,\tau))\partial_sX(s_{n}+\Delta s\theta,\tau)
\Big]d\theta,\\
&+\frac{\Delta s}{b\eps}\int_0^1
\nabla_\bx b(X(s_{n}+\Delta s\theta,\tau))\cdot\partial_sX(s_{n}+\Delta s\theta,\tau)d\theta JY(s_n,\tau).
\end{align*}
which with the Taylor's expansion
\begin{align*}
&Y(s_n,\tau)=Y(s_{n+1},\tau)-\Delta s\int_0^1\partial_s
Y(s_n+\Delta s\theta,\tau)d\theta,
\end{align*}
and the equation (\ref{charact 2scale b}) becomes
\begin{align*}
\xi_Y^n(\tau)=&
\frac{\Delta s}{\eps}\left(\frac{b(X(s_{n+1},\tau))}{b_k}-1\right)J\int_0^1\partial_s
Y(s_n+\Delta s\theta,\tau)d\theta\\
&-\Delta s \int_0^1\theta\partial_s^2
Y(s_n+\Delta s\theta,\tau)d\theta+\frac{\fe^{-J\tau}}{b}\int_0^1\Big[\Delta t\partial_t\bE^\eps(t_{n}+\Delta t\theta,X(s_{n},\tau))\\
&+\Delta s\nabla_\bx\bE^\eps(t_{n+1},X(s_{n}+\Delta s\theta,\tau))\partial_sX(s_{n}+\Delta s\theta,\tau)
\Big]d\theta,\\
&+\frac{\Delta s}{b\eps}\int_0^1
\nabla_\bx b(X(s_{n}+\Delta s\theta,\tau))\cdot\partial_sX(s_{n}+\Delta s\theta,\tau)d\theta JY(s_n,\tau).
\end{align*}
Hence again by Proposition \ref{prop:1st} and Lemma \ref{lemma21}, we get
$\|\xi_Y^n\|_{W^{1,\infty}_\tau}\leq C\Delta s$.
\end{proof}
We now focus on the propagation error. First, we denote the error function as
$$
\be_X^n(\tau):=X(s_n,\tau)-X^n(\tau),\quad \be_Y^n(\tau):=Y(s_n,\tau)-Y^n(\tau),\quad \tau\in\bT,\ 0\leq n\leq \frac{S_k}{\Delta s}.
$$
It is obvious by the choice of the initial data that
$$\be_X^0(\tau)=\be_Y^0(\tau)=0.$$
Taking the difference between the local error (\ref{local error1}) and scheme (\ref{fd1}), we get the error equations
\begin{subequations}\label{error eq}
\begin{align}
 &\frac{\be_X^{n+1}-\be_X^n}{\Delta s}+\frac{1}{\eps}\partial_\tau \be_X^{n+1}=\frac{\fe^{\tau J}}{b}\be_Y^{n+1}+\xi_X^n,\\
&\frac{\be_Y^{n+1}-\be_Y^n}{\Delta s}+\frac{1}{\eps}\partial_\tau \be_Y^{n+1}=F^n+\xi_Y^n,\quad n\geq0,
\end{align}
\end{subequations}
where
\begin{align}
F^n(\tau)=&\frac{1}{\eps}\left[\left(\frac{b(X(s_n,\tau))}{b_k}-1\right)JY(s_n,\tau)-
\left(\frac{b(X^n(\tau))}{b_k}-1\right)JY^n(\tau)\right]\nonumber\\
\label{F_err}
&+\frac{\fe^{-J\tau}}{b_k}\left[\bE^\eps(t_{n},X(s_{n},\tau))-\bE^\eps(t_{n},X^{n}(\tau))\right].
\end{align}
In the following, for any $\tau$-function $\varphi(\tau)$,
its Fourier coefficients are defined by
\begin{equation}
\label{fourier_tau}
\widehat{(\varphi)}_l = \frac{1}{2\pi} \int_0^{2\pi} e^{-i l \tau} \varphi(\tau) d\tau.
\end{equation}
\begin{lemma}\label{lm:2}
For the IMEX1 scheme (\ref{fd1}), we have the following formula for the error function on the $l\in\bZ$ Fourier coefficients  in $\tau$ and for $n\geq1$,
\begin{subequations}\label{formula}
\begin{align}
  \widehat{(\be_{X}^{n})}_l
  =&\frac{\Delta s^2}{2b}\sum_{j=1}^{n} \alpha_{j,l}(I+iJ)\left[\widehat{(F^{n-j})}_{l+1}+\widehat{(\xi_{Y}^{n-j})}_{l+1}\right] +
  \Delta s\sum_{j=0}^{n-1}p_l^{j+1}\widehat{(\xi_{X}^{n-1-j})}_{l} \nonumber\\
+&  \frac{\Delta s^2}{2b}\sum_{j=1}^{n} \beta_{j,l}(I-iJ)\left[\widehat{(F^{n-j})}_{l-1}+\widehat{(\xi_{Y}^{n-j})}_{l-1}\right]  \label{formulaa}\\
   \widehat{(\be_{Y}^{n})}_l
  =&\Delta s\sum_{j=1}^{n}p_l^{j}\left[\widehat{(F^{n-j})}_{l} +\widehat{(\xi_{Y}^{n-j})}_{l} \right],\label{formulac}
  \end{align}
  \end{subequations}
where
$$
\alpha_{j,l}= \frac{p_l^{j+1}p_{l+1}-p_l p_{l+1}^{j+1}}{p_l-p_{l+1}}, \;\;\;\; \beta_{j,l}=\frac{p_l^{j+1}p_{l-1}-p_l p_{l-1}^{j+1}}{p_l-p_{l-1}}  \mbox{ and } \;\;  p_l:=\frac{1}{1+il\Delta s/\eps},\quad l\in\bZ.
$$
\end{lemma}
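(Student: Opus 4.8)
The plan is to diagonalize the error equations \eqref{error eq} in the $\tau$ variable through the Fourier series \eqref{fourier_tau}, regarding $F^n$ and the truncation errors $\xi_X^n,\xi_Y^n$ as prescribed inhomogeneous sources, and then to solve the resulting scalar recurrences in $n$ mode by mode. Two elementary facts organize the whole computation. On the $l$-th coefficient, $\partial_\tau$ becomes multiplication by $il$, so after multiplying by $\Delta s$ the implicit left-hand sides contribute the factor $1+il\Delta s/\eps = 1/p_l$. Moreover, writing
$$
\fe^{\tau J}=\cos\tau\,I+\sin\tau\,J=\tfrac12\fe^{i\tau}(I-iJ)+\tfrac12\fe^{-i\tau}(I+iJ),
$$
shows that multiplication by $\fe^{\tau J}$ acts as a shift coupled to the constant matrices $(I\mp iJ)/2$, namely
$$
\widehat{\left(\fe^{\tau J}\varphi\right)}_l=\tfrac12(I-iJ)\,\widehat{(\varphi)}_{l-1}+\tfrac12(I+iJ)\,\widehat{(\varphi)}_{l+1}.
$$
This is exactly the mechanism that feeds the $l\pm1$ modes of $\be_Y$ into the $l$-th mode of $\be_X$ and produces the two matrices $I\pm iJ$ appearing in \eqref{formulaa}.

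First I would treat the $Y$-equation, which decouples completely in Fourier since the differential operator acting on $\be_Y^{n+1}$ is simply $\tfrac1\eps\partial_\tau$. Taking the $l$-th coefficient gives the one-step recurrence
$$
\widehat{(\be_Y^{n+1})}_l=p_l\,\widehat{(\be_Y^n)}_l+p_l\,\Delta s\left[\widehat{(F^n)}_l+\widehat{(\xi_Y^n)}_l\right],
$$
which, together with $\be_Y^0=0$, unwinds at once into the geometric sum \eqref{formulac}.

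Next I would solve the $X$-equation. Inserting the shift identity above, its $l$-th coefficient obeys a recurrence of the same type, but driven by a source assembled from $\widehat{(\be_Y^{n+1})}_{l\pm1}$ and $\widehat{(\xi_X^n)}_l$. Solving it with $\be_X^0=0$ expresses $\widehat{(\be_X^n)}_l$ as a single sum in $j$; the part carrying $\xi_X$ is already in the final form $\Delta s\sum_{j=0}^{n-1}p_l^{\,j+1}\widehat{(\xi_X^{n-1-j})}_l$, matching the last term of \eqref{formulaa}. It then remains to substitute the closed form \eqref{formulac} for $\widehat{(\be_Y)}_{l\pm1}$, which turns the $\be_Y$ contribution into a double sum over the two summation indices.

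The main work --- and the step I expect to be the most delicate --- is the reorganization of this double sum. Writing $q$ for the combined index so that each source $\widehat{(F^{n-q})}_{l+1}+\widehat{(\xi_Y^{n-q})}_{l+1}$ appears at a fixed value of $n-q$, the change of variables $q=j+m-1$ (with the inner sum then running over $j=1,\dots,q$ and $q=1,\dots,n$) collects its coefficient as $\sum_{j=1}^{q}p_l^{\,j}p_{l+1}^{\,q-j+1}$. One must then recognize this finite geometric-type sum as the closed form
$$
\sum_{j=1}^{q}p_l^{\,j}p_{l+1}^{\,q-j+1}=\frac{p_l^{\,q+1}p_{l+1}-p_l\,p_{l+1}^{\,q+1}}{p_l-p_{l+1}}=\alpha_{q,l},
$$
which is valid because $p_l\neq p_{l+1}$ whenever $\Delta s/\eps\neq0$. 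The $l-1$ modes are handled identically and produce $\beta_{q,l}$ together with the matrix $I-iJ$; gathering the three contributions yields precisely \eqref{formulaa}. The only real care required is the bookkeeping of the index shifts in the double sum and checking that $p_l$ is never equal to $p_{l\pm1}$, so that the geometric summation formula applies.
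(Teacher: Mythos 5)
Your proposal is correct, and it reaches \eqref{formula} by a genuinely different route than the paper. The setup is identical: both arguments take the Fourier transform \eqref{fourier_tau} of the error equations \eqref{error eq}, use the shift identity $\widehat{(\fe^{\tau J}\varphi)}_l=\tfrac12(I+iJ)\widehat{(\varphi)}_{l+1}+\tfrac12(I-iJ)\widehat{(\varphi)}_{l-1}$, and arrive at the same one-step recurrences with the factor $p_l$ coming from the implicit part. The difference lies in how the closed form is established. The paper proceeds by induction on $n$: it verifies the formula at $n=1$, then plugs the induction hypothesis into the one-step recurrences and checks the result against \eqref{formula} using the algebraic identity $\frac{p_l^{j+1}p_{l+1}-p_l^2p_{l+1}^{j}}{p_l-p_{l+1}}+p_lp_{l+1}^{j}=\alpha_{j,l}$ (and its analogue for $\beta_{j,l}$). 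You instead derive the formula constructively: unwind the $Y$ recurrence into the discrete Duhamel sum \eqref{formulac}, substitute it into the unwound $X$ recurrence, and exchange the order of summation in the resulting double sum, recognizing the coefficient of each source term as the discrete convolution $\sum_{j=1}^{q}p_l^{j}p_{l+1}^{q+1-j}=\alpha_{q,l}$, valid since $p_l\neq p_{l\pm1}$ whenever $\Delta s/\eps>0$ (a hypothesis implicitly needed for the statement itself, as $\alpha_{j,l}$ and $\beta_{j,l}$ involve division by $p_l-p_{l\pm1}$). Your index bookkeeping ($q=j+m-1$, inner sum over $j=1,\dots,q$, outer over $q=1,\dots,n$) is exactly right, as is the geometric evaluation of the convolution. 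What each approach buys: yours explains \emph{where} the coefficients $\alpha_{j,l},\beta_{j,l}$ come from --- they are convolutions of the two geometric sequences $p_l^{j}$ and $p_{l\pm1}^{j}$, a representation that also makes the bound $\Delta s|\alpha_{j,l}|\leq C\eps$ of the subsequent lemma transparent --- whereas the paper's induction is shorter to write once the formula is guessed, at the price of obscuring its origin.
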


\begin{proof}
Taking the Fourier transform \eqref{fourier_tau}
of (\ref{error eq}), we have
\begin{align*}
&\frac{\widehat{(\be_X^{n+1})}_l-\widehat{(\be_X^{n})}_l}{\Delta s}+\frac{il}{\eps}\widehat{(\be_X^{n+1})}_l=\frac{\widehat{(\fe^{\tau J}\be_Y^{n+1})}_l}{b}+\widehat{(\xi_X^n)}_l,\\
&\frac{\widehat{(\be_Y^{n+1})}_l-\widehat{(\be_Y^{n})}_l}{\Delta s}+\frac{il}{\eps}\widehat{(\be_Y^{n+1})}_l=\widehat{(F^n)}_l+\widehat{(\xi_Y^n)}_l,\quad l\in\bZ,\quad n\geq0.
\end{align*}
Noting that
\begin{equation*}
\widehat{(\fe^{\tau J}\be_Y^{n})}_l=\frac{1}{2}\left[ (I+iJ) \widehat{(\be_Y^{n})}_{l+1} + (I-iJ) \widehat{(\be_Y^{n})}_{l-1}\right],
\end{equation*}
we have
\begin{subequations}\label{thm1eq1}
\begin{align}
\widehat{(\be_{X}^{n+1})}_l\!=&p_l\widehat{(\be^{n}_{X})}_l \!
  +\!\frac{p_l\Delta s}{2b}\Big[\!(I\!+\!iJ)\widehat{(\be_{Y}^{n+1})}_{l\!+1}\!+\!(I\!-\!iJ)\widehat{(\be_{Y}^{n+1})}_{l\!-1}\!\Big]\!+\!p_l\Delta s\widehat{(\xi_{X}^{n})}_{l},\label{thm1eq1X1}\\
\widehat{(\be_{Y}^{n+1})}_l\!=&p_l\widehat{(\be_Y^{n})}_l
  +{p_l\Delta s}\left[\widehat{(F^n)}_l+\widehat{(\xi_Y^n)}_l\right],\quad l\in\bZ,\ n\geq0.\label{thm1eq1Y}
\end{align}
\end{subequations}
Let $n=0$ in the above relations, and then inserting (\ref{thm1eq1Y}) into (\ref{thm1eq1X1}), noting that
$\be_X^0(\tau)=\be_Y^0(\tau)=0,$ we get
\begin{align*}
  \widehat{(\be_{X}^{1})}_l=& \frac{p_l \Delta s^2}{2b} \!\left[ p_{l\!+1}(I+iJ)(\widehat{(F^{0})}_{l\!+1}\!+\widehat{(\xi_Y^{0})}_{l+1})
  + p_{l\!-1}(\widehat{(F^{0})}_{l\!-1}\!+\widehat{(\xi_Y^{0})}_{l-1})\right]\!+p_l\Delta s\widehat{(\xi_{X}^{0})}_{l},\\
  \widehat{(\be_{Y}^{1})}_l=&  p_l\Delta s \left[\widehat{(F^0)}_l+\widehat{(\xi_Y^0)}_l\right].
\end{align*}
Therefore, the formula (\ref{formula}) with $n=1$ is true. Let us assume (\ref{formula}) is true for $1\leq m\leq n$ and we check the case $m=n+1$.

Plugging (\ref{formulac}) into (\ref{thm1eq1Y}), we get
 \begin{align}
 \widehat{(\be_{Y}^{n+1})}_l=&\Delta s\sum_{j=1}^n p_l^{j+1}\left[\widehat{(F^{n-j})}_{l} + \widehat{(\xi_{Y}^{n-j})}_{l}\right]
 +{p_l\Delta s}\left[\widehat{(F^n)}_l+\widehat{(\xi_Y^n)}_l\right]\nonumber\\
 =&\Delta s\sum_{j=1}^{n+1}p_l^{j}\left[\widehat{(F^{n+1-j})}_{l} + \widehat{(\xi_{Y}^{n+1-j})}_{l}\right].\label{thm1eqnew}
 \end{align}
Hence  (\ref{formulac}) is checked. Next, plugging (\ref{thm1eqnew}) and (\ref{formulaa}) into (\ref{thm1eq1X1}) leads to
 \begin{align*}
 \widehat{(\be_{X}^{n+1})}_l=&\frac{\Delta s^2}{2b}\sum_{j=1}^{n} p_l\alpha_{j,l}(I+iJ)\left[\widehat{(F^{n-j})}_{l+1}+\widehat{(\xi_{Y}^{n-j})}_{l+1}\right] +
  \Delta s\sum_{j=0}^{n-1}p_l^{j+2}\widehat{(\xi_{X}^{n-1-j})}_{l} \nonumber\\
+&  \frac{\Delta s^2}{2b}\sum_{j=1}^{n} p_l\beta_{j,l}(I-iJ)\left[\widehat{(F^{n-j})}_{l-1}+\widehat{(\xi_{Y}^{n-j})}_{l-1}\right] +p_l\Delta s  \widehat{(\xi_{Y}^{n})}_{l}\\
+&\frac{p_l \Delta s^2}{2b}(I+iJ)\sum_{j=1}^{n+1}p_{l+1}^{j}\left[\widehat{(F^{n+1-j})}_{l+1} + \widehat{(\xi_{Y}^{n+1-j})}_{l+1}\right]\\
+&\frac{p_l \Delta s^2}{2b}(I-iJ)\sum_{j=1}^{n+1}p_{l-1}^{j}\left[\widehat{(F^{n+1-j})}_{l-1} + \widehat{(\xi_{Y}^{n+1-j})}_{l-1}\right].
 \end{align*}
First, we use the relation
$$
\sum_{j=0}^{n-1}p_l^{j+2}\widehat{(\xi_{X}^{n-1-j})}_{l} + p_l  \widehat{(\xi_{Y}^{n})}_{l} = \sum_{j=0}^{n} p_l^{j+1}\widehat{(\xi_{X}^{n-j})}_{l},
$$
which enables to recover the second term of \eqref{formulaa} for $n+1$ .
Second, we remark than $p_l\alpha_{0, l}=0$ so that the first term in the previous expression
of $\widehat{(\be_{X}^{n+1})}_l$ can be reformulated as
$$
\sum_{j=1}^{n} p_l\alpha_{j,l}\!\left[\!\widehat{(F^{n-j})}_{l\!+1}\!\!+\widehat{(\xi_{Y}^{n-j})}_{l\!+1}\!\right] \!\!
=\! \sum_{j=1}^{n+1} \frac{p_l^{j\!+1} p_{l\!+1}\! - p_l^2 p_{l+1}^j}{p_l-p_{l+1}}\! \left[\!\widehat{(F^{n+1\!-j})}_{l\!+1}\!\!+\widehat{(\xi_{Y}^{n+1\!-j})}_{l\!+1}\!\right]\!.
$$
Then, combining the latter with the term of the third line together with the following relation
\begin{eqnarray*}
\frac{p_l^{j\!+1} p_{l\!+1}\! - p_l^2 p_{l+1}^j}{p_l-p_{l+1}}+p_lp_{l+1}^j = \frac{p_l^{j\!+1} p_{l\!+1}\! - p_l p_{l+1}^{j+1}}{p_l-p_{l+1}}
=\alpha_{j,l},
\end{eqnarray*}
leads to the first term (with $n+1$) of \eqref{formulaa}. Similar arguments enable to recover the last term of \eqref{formulaa}.
Hence (\ref{formulaa}) holds for $n+1$ and the induction proof is done so that the formula (\ref{formula}) holds for all $n\geq1$.
\end{proof}
Now with Lemmas \ref{lm:1} and \ref{lm:2}, we give the proof of Theorem \ref{thm1}.

\textit{Proof of Theorem \ref{thm1}:}
We will proceed by an induction on $n$, by assuming that the following estimate holds for all $m=0, \dots, n$
\begin{equation}
\label{ineq_hyp}
\frac{1}{\eps}\|X_k^m-\bx_{k,0}\|_{H^1_\tau} + \|Y_k^m\|_{H^1_\tau} \leq \frac{1}{\eps}\|X_k(s_m)-\bx_{k,0}\|_{H^1_\tau}
+ \|Y_k(s_m)\|_{H^1_\tau} + 1,
\end{equation}
and using the relations \eqref{formula} on $\be_X^n$ and $\be_Y^n$.

Using Lemma \ref{lemma21}, this implies in particular $\|Y_k^m\|_{H^1_\tau}\leq C$. Then, we can deduce an estimate
for the nonlinear part $F^m$ given by \eqref{F_err}
\begin{equation}
\label{thm1F}
\|F^m\|_{H^1_\tau}\leq C\|\be_Y^m\|_{H^1_\tau}+\frac{C}{\eps}\|\be_X^m\|_{H^1_\tau},\quad 0\leq m\leq n,
\end{equation}
using again Lemma \ref{lemma21} and Sobolev embeddings.
Finally, in view of deriving an estimate for $e^{n+1}_X$ and $e^{n+1}_Y$,
we will use the following elementary lemma on the $p_l$ coefficients
\begin{lemma}
Let $p_l=\frac{1}{1+il\Delta s/\eps}, \forall l\in \bZ$, then the following estimates hold
\begin{eqnarray*}
|p_l|\leq 1,\quad \forall l\in\bZ,\quad \mbox{and}\quad \Delta s|p_l|\leq \eps,\quad \forall l\in\bZ\backslash\{0\},&&\nonumber\\
\Delta s\left|\frac{p_l^{j+1}p_{l\pm1}-p_l p_{l\pm1}^{j+1}}{p_l-p_{l\pm1}}\right|\leq C\eps,\quad \forall l\in\bZ,\forall j\in\bN^\star,&&
\end{eqnarray*}
where the constant $C>0$ is independent of $l,j,\Delta s$ and $\eps$.
\end{lemma}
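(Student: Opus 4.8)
The plan is to dispatch the three estimates in order of difficulty, the first two being immediate and the third hinging on the exact algebraic structure of the $p_l$ rather than on any naive bound. Writing out the modulus, $|p_l| = \bigl(1 + (l\Delta s/\eps)^2\bigr)^{-1/2}$, the bound $|p_l| \le 1$ is clear for every $l \in \bZ$. For $l \neq 0$ one has $|1 + il\Delta s/\eps| \ge |l|\,\Delta s/\eps \ge \Delta s/\eps$ since $|l|\ge 1$, hence $|p_l| \le \eps/\Delta s$, which is exactly $\Delta s\,|p_l| \le \eps$.

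The crux is the third estimate, for which I would isolate two ingredients. The first is the identity, obtained by reduction to a common denominator,
$$ p_l - p_{l\pm1} = \frac{1}{1+il\Delta s/\eps} - \frac{1}{1+i(l\pm1)\Delta s/\eps} = \pm\, i\,\frac{\Delta s}{\eps}\, p_l\, p_{l\pm1}, $$
which in particular shows $p_l \neq p_{l\pm1}$ (so the quotient is well defined) and that the denominator carries a factor $\Delta s/\eps$. The second is the elementary factorization of the numerator valid for $j \in \bN^\star$,
$$ p_l^{\,j+1}p_{l\pm1} - p_l\, p_{l\pm1}^{\,j+1} = p_l\, p_{l\pm1}\,\bigl(p_l^{\,j} - p_{l\pm1}^{\,j}\bigr). $$

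Combining the two, the common factor $p_l\, p_{l\pm1}$ cancels between numerator and denominator, leaving
$$ \frac{p_l^{\,j+1}p_{l\pm1} - p_l\, p_{l\pm1}^{\,j+1}}{p_l - p_{l\pm1}} = \mp\, i\,\frac{\eps}{\Delta s}\,\bigl(p_l^{\,j} - p_{l\pm1}^{\,j}\bigr). $$
Taking moduli and invoking $|p_l|\le 1$ from the first estimate gives $\bigl|p_l^{\,j} - p_{l\pm1}^{\,j}\bigr| \le |p_l|^{j} + |p_{l\pm1}|^{j} \le 2$, so the whole quantity is at most $2\eps/\Delta s$; multiplying by $\Delta s$ yields the claim with $C = 2$, manifestly uniform in $l$, $j$, $\Delta s$ and $\eps$. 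I expect the only genuine subtlety to be recognizing that the factor $\Delta s/\eps$ produced by the difference $p_l - p_{l\pm1}$ exactly compensates the prefactor $\Delta s$, turning an apparently $1/\eps$-singular quotient into an $O(\eps)$ quantity. This is precisely why one must exploit the exact cancellation: bounding the geometric-type sum $\sum_{i=0}^{j-1}|p_l|^{i}|p_{l\pm1}|^{j-1-i}$ term by term would only produce the factor $j$ and fail to be uniform.
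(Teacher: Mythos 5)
Your proof is correct and follows essentially the same route as the paper: both rely on the factorization $p_l^{j+1}p_{l\pm1}-p_l p_{l\pm1}^{j+1}=p_l p_{l\pm1}\bigl(p_l^{j}-p_{l\pm1}^{j}\bigr)$ together with the identity $p_l-p_{l\pm1}=\pm i(\Delta s/\eps)\,p_l p_{l\pm1}$ (the paper uses it in modulus form, $(\Delta s/\eps)\left|p_l p_{l\pm1}/(p_l-p_{l\pm1})\right|=1$), and then bound $\bigl|p_l^{j}-p_{l\pm1}^{j}\bigr|\leq 2$. The only cosmetic difference is in the second estimate, where you use $|1+il\Delta s/\eps|\geq|l|\Delta s/\eps$ while the paper invokes boundedness of $a\mapsto a/\sqrt{1+a^2}$; both are equally valid.
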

\begin{proof}
The first inequality is immediate. For the second one, we have $(\Delta s/\eps) |p_l| = (\Delta s/\eps)/(1+\Delta s^2/\eps^2)^{1/2}$
and conclude owing that the function $f(a)=a/\sqrt{1+a^2}, a\geq 0$ is bounded (with $a=\Delta s/\eps$).

Let consider the last inequality. Denoting again $a=\Delta s/\eps$, we have
$$
a\left|\frac{p_l^{j+1}p_{l\pm1}-p_l p_{l\pm1}^{j+1}}{p_l-p_{l\pm1}}\right| = a \left|\frac{p_l p_{l\pm1} }{p_l-p_{l\pm1}}\right|
| p_l^{j} - p_{l\pm1}^{j}| = | p_l^{j} - p_{l\pm1}^{j}| \leq 2
$$
since $a \left|\frac{p_l p_{l\pm1} }{p_l-p_{l\pm1}}\right|=1$ and $|p_l|\leq 1$.
\end{proof}
Thanks to the previous tools, we get from the error formula \eqref{formulaa} at $(n+1)$,
\begin{align}
  |\widehat{(\be_{X}^{n+1})}_l|
  \leq& \Delta s\sum_{j=0}^{n}|\widehat{(\xi_{X}^{n-j})}_{l}|\nonumber\\
 &\hspace{-1cm}+C\eps\Delta s\sum_{j=0}^{n}\left[|\widehat{(F^{n-j})}_{l+1}|+|\widehat{(\xi_{X}^{n-j})}_{l+1}+|\widehat{(F^{n-j})}_{l-1}|+|\widehat{(\xi_{X}^{n-j})}_{l-1}|\right].
 \label{eq:enp1}
\end{align}
Taking the square of \eqref{eq:enp1} and using Cauchy-Schwarz inequality lead to
\begin{align*}
   |\widehat{(\be_{X}^{n+1})}_l|^2
  \leq & C\Delta s \sum_{j=0}^{n}| \widehat{(\xi_{X}^{n-j})}_{l}|^2\\
  +& C\eps^2\Delta s \sum_{j=0}^{n}\left[  |\widehat{(F^{n-j})}_{l+1}|^2+
  |\widehat{(\xi_{X}^{n-j})}_{l+1}|^2 + |\widehat{(F^{n-j})}_{l-1}|^2 + |\widehat{(\xi_{X}^{n-j})}_{l-1}|^2\right].
\end{align*}
Then, to get $H^1_\tau$ estimate, we multiply by $(1, l^2)$, sum on $l\in\bZ$ and add the two resulting equations so that, using Parseval identity, we obtain
\begin{align*}
  \|\be_{X}^{n+1}\|_{H^1_\tau}^2
  \leq & C\Delta s \sum_{j=0}^{n}\|\xi_{X}^{n-j}\|_{H^1_\tau}^2+ C\eps^2\Delta s \sum_{j=0}^{n}\left[\|F^{n-j}\|_{H^1_\tau}^2+\|\xi_{Y}^{n-j}\|^2_{H^1_\tau}\right].
\end{align*}
By the error formula (\ref{formulac}), we get by similar arguments
$$\|\be_{Y}^{n+1}\|_{H^1_\tau}^2
  \leq C\Delta s\sum_{j=0}^{n}\left[\|F^{n-j}\|_{H^1_\tau}^2
  +\|\xi_{Y}^{n-j}\|_{H^1_\tau}^2\right].$$
Combining the two, we then have
\begin{align*}
\frac{1}{\eps^2}\|\be_{X}^{n+1}\|_{H^1_\tau}^2+\|\be_{Y}^{n+1}\|_{H^1_\tau}^2\leq&
C\Delta s\sum_{j=0}^{n}\left[\frac{1}{\eps^2}\|\xi_{X}^{j}\|_{H^1_\tau}^2+\|\xi_{Y}^{j}\|_{H^1_\tau}^2\right] +C\Delta s\sum_{j=0}^{n}\|F^{j}\|_{H^1_\tau}^2.
\end{align*}
 Now inserting estimates  \eqref{error_trunc} and (\ref{thm1F}), we get
\begin{align*}
\frac{1}{\eps^2}\|\be_{X}^{n+1}\|_{H^1_\tau}^2+\|\be_{Y}^{n+1}\|_{H^1_\tau}^2\leq&
C\Delta s^2 +C\Delta s\sum_{j=0}^{n}\left[\frac{1}{\eps^2}\|\be_{X}^{j}\|_{H^1_\tau}^2+\|\be_{Y}^{j}\|_{H^1_\tau}^2
  \right],
\end{align*}
and we conclude by using discrete Gronwall's lemma to get
$$
\frac{1}{\eps}\|\be_{X}^{n+1}\|_{H^1_\tau}+\|\be_{Y}^{n+1}\|_{H^1_\tau}\leq C\Delta s.
$$
As long as \eqref{ineq_hyp} is true, we then deduce
$$
\frac{1}{\eps}\|{X}^{n+1}_k-\bx_{k,0}\|_{H^1_\tau}+\|{Y}_k^{n+1}\|_{H^1_\tau}\leq \frac{1}{\eps}\|{X}_k(s_{n+1})-\bx_{k,0}\|_{H^1_\tau}+\|{Y}_k(s_{n+1})\|_{H^1_\tau} + C\Delta s,
$$
which completes the induction proof by considering $\Delta s\leq \Delta s_0 = 1/C$.
\qed


\subsection{Second order numerical scheme}


A second order scheme (IMEX2) could be written down as follows.  For $n\geq0,$
\begin{subequations}\label{fd2}
\begin{align}
\frac{X_k^{n+1}-X_k^n}{\Delta s}+\frac{1}{2\eps}\partial_\tau \left(X_k^{n+1}
+X_k^n\right)=&\frac{\fe^{\tau J}}{b_k}Y_k^{n+1/2},\\
\frac{Y_k^{n+1}-Y_k^n}{\Delta s}+\frac{1}{2\eps}\partial_\tau \left(Y_k^{n+1}
+Y_k^n\right)=&
\frac{1}{\eps}\left(\frac{b(\overline{X}_k^{n+1/2})}{b_k}-1\right)JY^{n+1/2}_k\nonumber\\
&+\fe^{-J\tau}\frac{\bE^\eps(t_{n+1/2},\overline{X}_k^{n+1/2})}{b_k}.
\end{align}
\end{subequations}
with
\begin{align*}
&\overline{X}_k^{n+1/2}=\frac{X_k^n+X^{n+1}_k}{2},\\
&\frac{X_k^{n+1/2}-X_k^n}{\Delta s/2}+\frac{1}{\eps}\partial_\tau X_k^{n+1/2}=\frac{\fe^{\tau J}}{b_k}Y_k^n,\\
&\frac{Y_k^{n+1/2}-Y_k^n}{\Delta s/2}+\frac{1}{\eps}\partial_\tau Y_k^{n+1/2}=
\frac{1}{\eps}\left(\frac{b(X_k^n)}{b_k}-1\right)JY^n_k+\fe^{-J\tau}\frac{\bE^\eps(t_{n+1/2},X_k^{n+1/2})}{b_k}.
\end{align*}
The above IMEX2 scheme is also explicit. From practical results (as can be seen in the next section), we observe that the IMEX2 with the 3rd order prepared initial data (derived in Appendix \ref{appendix}) gives second order uniform accuracy. However, the rigorous error estimates would be more involved than the first order IMEX1 scheme
and it is still under-going. We will address it in a future work.

\subsection{A strategy for the Vlasov-Poisson case}
\label{vpcase}
When the Vlasov equation (\ref{eq:vp1}) is coupled to Poisson equation, the electric field $\bE^\eps$ is a self-consistent field
and the problem becomes nonlinear.  Under PIC discretisation, $\bE^\eps$ in (\ref{charact}) is given by
\begin{equation}\label{diracE}
\nabla_\bx\cdot\bE^\eps(t,\bx)=\sum_{k=1}^{N_p}\omega_k\delta(\bx-\bx_k(t)),\quad t\geq0,\ \bx\in\bR^2.
\end{equation}
Hence by under the scale of time $s_j=b_jt$, the electric field evaluated at
one particular particle $\bx_j(t)$ ($1\leq j\leq N_p$) solves (as we see in (\ref{charact2}))
$$
\nabla_\bx\cdot\bE^\eps(s_j/b_j,\bx_j(t))=\sum_{k=1}^{N_p}\omega_k\delta(\bx_j(t)-\bx_k(s_j/b_j)).
$$
Each particle carries its own frequency and now all the particle are coupled to each other through Poisson equation, which as a result mixes all the frequencies. Thus, (\ref{charact2}) is a multiple-frequency system with a large number $N_p$ of degrees  \cite{UAmutiple} and the proposed two-scale formulation is not rigorously working. Here, we give a practical
strategy that works well  based on our numerical experiments.

Note the above strategy relies on the key confinement property (see Lemma \ref{lemma1})
$$
b(\tilde{\bx}_k(s))-b_k=C(s)\eps,\quad s>0, \;\; 1\leq k\leq N_p,
$$
as well as the two-scale version in Lemma \ref{lemma2}. In order to have a better control of $C(s)$ in the oscillatory case, we consider the scale of time (\ref{scale time}) dynamically. Discretise time with $\Delta t>0$ and denote $t_n=n\Delta t$. For $n\geq0$ and $t_n\leq t\leq t_{n+1}$, define
\begin{equation}\label{scale time2}
b_k^n=b(\bx_k(t_n)),\quad t=t_n+\frac{s}{b_k^n},\quad \tilde{\bx}_k(s)=\bx_k(t),
\end{equation}
and we solve
 \begin{subequations}\label{charact4}
\begin{align}
   & \dot{\tilde{\bx}}_k(s)=\frac{\tilde{\bv}_k(s)}{b_k^n}, \\
  &\dot{\tilde{\bv}}_k(s)=\frac{b(\tilde{\bx}_k(s))}{\eps b_k^n}J\tilde{\bv}_k(t)+\frac{\bE^\eps(t_n+s/b_k^n,\tilde{\bx}_k(s))}{b_k^n},\quad 0<s\leq b_k^n\Delta t, \\
   & \tilde{\bx}_k(0)=\bx_{k}(t_n),\quad \tilde{\bv}_k(0)=\bv_{k}(t_n),
\end{align}
\end{subequations}
for one step with $\Delta s=b_k^n\Delta t$.
Again we isolate the leading order oscillation term, filter out this main oscillation (\ref{filter}) and then consider the two-scale formulation but leave the high-frequency character of the electric field part alone \cite{UAVP2d}.
We then obtain 
\begin{subequations}\label{vp tau}
\begin{align}
\label{vp_Xtau}
&\partial_s X_k+\frac{1}{\eps}\partial_\tau X_k=\frac{\fe^{\tau J}}{b_k^n}Y_k, \quad 0<s\leq\Delta s,\\
\label{vp_Ytau}
&\partial_s Y_k+\frac{1}{\eps}\partial_\tau Y_k=\frac{1}{\eps}\left(\frac{b(X_k)}{b_k^n}-1\right)JY_k+
\fe^{-\tau J}\frac{\bE^\eps(t_n+s/b_k^n,X_k)}{b_k^n}.
\end{align}
\end{subequations}
For the initial data, we formally choose $X_k(0,\tau)=X_k^{1st}(\tau)$, $Y_k(0,\tau)=Y_k^{1st}(\tau)$ given by \eqref{Xfirst} and \eqref{Yfirst} by replacing $\bx_{k,0}$ and $\bv_{k,0}$ with respectively $\bx_k(t_n)$ and $\bv_k(t_n)$.
This initial data enables to offer second order uniform accuracy when an exponential integrator scheme (as in \cite{UAVP4d})
is used for \eqref{vp tau}.


We shall briefly derive the scheme. For the simplicity of notations, we put \eqref{vp_Xtau}-\eqref{vp_Ytau}
into the following compact form:
\begin{align}\label{eq: U}
\partial_sU_k(s,\tau)+\frac{1}{\eps}\partial_\tau U_k(s,\tau)=F_k(s,\tau),\quad
0<s\leq\Delta s,\ \tau\in\bT,
\end{align}
where we denote
$$U_k(s,\tau)=\binom{X_k(s,\tau)}{Y_k(s,\tau)},\quad F_k(s,\tau)=
\binom{F_{1,k}(s,\tau)}{F_{2,k}(s,\tau)},$$
with
\begin{subequations}
\begin{align}
&F_{1,k}(s,\tau)=\frac{\fe^{\tau J}}{b_k^n}Y_k(s,\tau),\\
&F_{2,k}(s,\tau)=\frac{1}{\eps}\left(\frac{b(X_k(s,\tau))}{b_k^n}-1\right)JY_k(s,\tau)+
\fe^{-\tau J}\frac{\bE^\eps(t_n+s/b_k^n,X_k(s,\tau))}{b_k^n}.
\end{align}
\end{subequations}
Applying Fourier transform in $\tau$ on (\ref{eq: U})
$$U_k(s,\tau)=\sum_{l=-N_\tau/2}^{N_\tau/2-1}\widehat{(U_k)}_l(s)\fe^{il\tau},\qquad
F_k(s,\tau)=\sum_{l=-N_\tau/2}^{N_\tau/2-1}\widehat{(F_k)}_l(s)\fe^{il\tau},$$
and then by Duhamel's principle  from $0$ to $\Delta s$ ($n\geq0$),
\begin{align*}
\widehat{(U_k)}_l(\Delta s)&=\fe^{-\frac{il\Delta s}{\eps}}\widehat{(U_k)}_l(0)
+\int_0^{\Delta s}\fe^{-\frac{il}{\eps}(\Delta s-\theta)}\widehat{(F_k)}_l(\theta)d\theta.
\end{align*}

A first order uniformly accurate scheme, shorted as EI1 in the following, is obtained as,
\begin{align*}
\widehat{(U_k)}_l(\Delta s)
&\approx\fe^{-\frac{il\Delta s}{\eps}}\widehat{(U_k)}_l(0)
+\int_0^{\Delta s}\fe^{-\frac{il}{\eps}(\Delta s-\theta)}\widehat{(F_k)}_l(0)
d\theta\\
&\approx\fe^{-\frac{il\Delta s}{\eps}}\widehat{(U_k)}_l(0)
+p^{E}_l\widehat{(F_k)}_l(0),
\end{align*}
where
\begin{equation*}
p^E_l:=\int_0^{\Delta s}\fe^{-\frac{il}{\eps}(\Delta s-\theta)}d\theta=\left\{\begin{split}
&\frac{i\eps}{l}\left(\fe^{-\frac{il\Delta s}{\eps}}-1\right),\quad l\neq0,\\
&\Delta s,\qquad\qquad\qquad\quad\, l=0.\end{split}\right.
\end{equation*}

A second order scheme, shorted as EI2, is given as,
\begin{align*}
\widehat{(U_k)}_l(\Delta s)
&\approx\fe^{-\frac{il\Delta s}{\eps}}\widehat{(U_k)}_l(0)
+\int_0^{\Delta s}\fe^{-\frac{il}{\eps}(\Delta s-\theta)}\left(\widehat{(F_k)}_l(0)
+\theta\frac{d}{ds}\widehat{(F_k)}_l(0)\right)d\theta\\
&\approx\fe^{-\frac{il\Delta s}{\eps}}\widehat{(U_k)}_l(0)
+p^E_l\widehat{(F_k)}_l(s_n)
+q^E_l\frac{1}{\Delta s}\left(\widehat{(F_k^*)}_l-\widehat{(F_k)}_l(0)\right),
\end{align*}
where
\begin{equation*}
q^E_l:=\int_0^{\Delta s}\fe^{-\frac{il}{\eps}(\Delta s-\theta)}\theta d\theta=
\left\{\begin{split}
&\frac{\eps}{l^2}\left(\eps-\eps\fe^{-\frac{il\Delta s}{\eps}}-il\Delta s\right),\quad l\neq0,\\
&\frac{\Delta s^2}{2},\qquad\qquad\qquad\qquad\qquad\ \, l=0.\end{split}\right.
\end{equation*}
and
\begin{align*}
\displaystyle &\widehat{(U_k)}_l^*=\fe^{-\frac{il\Delta s}{\eps}}\widehat{(U_k)}_l(0)
+p^E_l\widehat{(F_k)}_l(0),\\
\displaystyle &F_k^*(\tau)=
\displaystyle\binom{\fe^{\tau J}Y_k^*(\tau)/b_k^n}{\frac{1}{\eps}\left(b(X_k^*(\tau))/b_k^n-1\right)JY_k^*(\tau)+
\fe^{-\tau J}\bE^\eps(t_n+\Delta s/b_k^n,X_k^*(\tau))/b_k^n}.
\end{align*}

Suppose now we have computed numerically $X_k(\Delta s,\tau)$ as the two-scale solution for system (\ref{charact4}), we update the electric field for the next time level $t_{n+1}$ as
$$
\nabla_\bx \cdot\bE^\eps(t_{n+1},\bx)=\sum_{k=1}^{N_p}\omega_k\delta(\bx-X_k(\Delta s,\Delta s/\eps)),\quad \ \bx\in\bR^2.
$$

 \section{Numerical results} \label{sect:ne}

This section is devoted to numerical illustrations of the numerical schemes introduced above.
We consider (\ref{eq:1}) with the following initial data
\begin{equation}\label{KH}
f_0(\bx,\bv)=\frac{1}{4\pi}\left(1+\sin(x_2)+\eta\cos(kx_1)\right)\left(\fe^{-\frac{(v_1+2)^2+v_2^2}{2}}+
 \fe^{-\frac{(v_1-2)^2+v_2^2}{2}}\right),
  \end{equation}
with $\bx=(x_1, x_2)$ and $\bv=(v_1, v_2)$ and the non-homogeneous  magnetic field
$$
b(\bx)=1+\sin(x_1)\sin(x_2)/2,
$$
to test convergence order. The spatial domain is $\bx=(x_1, x_2) \in\Omega=[0,2\pi/k]\times[0,2\pi]$ for some $k,\eta>0$. We choose
$\eta=0.05,k=0.5$ and discretise $\Omega$ with 64 points in $x_1$-direction and $32$ points in $x_2$-direction.
As a diagnostic, we consider the following two quantities:
\begin{align*}
&\rho^\eps(t,\bx)=\int_{\bR^2}f^\eps(t,\bx,\bv)d\bv,\qquad \bx\in\Omega,\\
&\rho_\bv^\eps(t,\bx)=\int_{\bR^2}|\bv|^2f^\eps(t,\bx,\bv)d\bv.
\end{align*}
We then compute the relative errors of the different numerical schemes with respect to
$\rho^\eps$ and $\rho^\eps_\bv$ at the final time $t_f=1$ in maximum space norm.
We numerically solve (\ref{eq:1}) with two configurations. For the first one,
we consider an external electric field given by
\begin{align*}
&\bE^\eps(t,\bx)=\binom{E_1(\bx)}{E_2(\bx)}(1+\sin(t)/2),\\
&E_1(\bx)=\cos(x_1/2)\sin(x_2)/2,\quad E_2(\bx)=\sin(x_1/2)\cos(x_2),
\end{align*}
which will be addressed as `given E' in the numerical results. For the second case,
we consider the nonlinear Vlasov-Poisson equation (\ref{eq:1})-(\ref{poisson}).
The reference solution is obtained by using a fourth order Runge-Kutta method on
the original problem (\ref{charact}) with step size $\Delta t=10^{-6}$.

For the PIC method, we choose $N_p=204800$ particles and
the projection of the particles on the uniform spatial grid is done by quintic splines.
The time step $\Delta s$ is determined by fixing $\Delta t$ (recall the relation $s=b_k t$ and $\Delta s=b_k \Delta t$)
so that after $N$ time steps (such that $t_f=N\Delta t$, every particles stops at the same time $t_f$.
Finally, we denote by $N_\tau$ the number of points in the $\tau$-direction: $\Delta \tau = 2\pi / N_\tau$.

In the sequel, 
second order initial data will refer to $(X_k^{2nd}, Y_k^{1st})$ with $X_k^{2nd}$ given by  \eqref{Xsecond}
and $Y_k^{1st}$ given by  \eqref{Yfirst}, whereas third order initial data will refer to $(X_k^{3rd}, Y_k^{2nd})$ with $X_k^{3rd}$ given by  \eqref{Xthird}
and $Y_k^{2nd}$ given by  \eqref{Ysecond}.

{\bf External electric field.\\}
We first study the `given E' case. In Figure  \ref{fig:VE1}, the errors (in $L^\infty_t$ norm) in time
of the IMEX1 scheme in $\rho^\eps$ with second order initial data 
are displayed for different values of $\eps$.
As shown in the numerical analysis (see Proposition \ref{prop:1st} and Theorem \ref{thm1error}),
the scheme IMEX1 has uniform first order accuracy in time. Moreover, we can observe that
the error decreases as $\eps$ goes to zero, in agreement with theoretical results.

In Figure \ref{fig:VE1super}, the errors  (in $L^\infty_t$ norm) in time of the IMEX1 scheme with second order initial data
is plotted regarding the quantities $\rho^\eps/\eps$ and $\rho^\eps_\bv$. We can observe the curves are almost superimposed
confirming the theoretical error estimates derived previously.

The influence of the discretization in the $\tau$ direction of the IMEX1
(using $\Delta t=10^{-5}$) is presented in Figure \ref{fig:VE1tau}. We computed the difference between
the numerical solution obtained with several $N_\tau$ and the one using $N_\tau=64$, for the two quantities
$\rho^\eps$ and $\rho^\eps_\bv$. The discretization error in $\tau$ has a  spectral behavior with respect to $N_\tau$,
and when $\eps$ becomes small, we observe that the method reaches machine accuracy with very few number of grids
(typically $N_\tau=8$ is sufficient when $\eps=10^{-3}$).

Next, we study the convergence of the second order IMEX2 scheme with third order initial data.  
In Figures \ref{fig:VE3}-\ref{fig:VE2}, we can see second order uniform
accuracy of the scheme with respect to $\eps$, regarding
both $\rho^\eps/\eps$ and $\rho^\eps_\bv$.

We also study the convergence rate of the Vlasov equation (\ref{eq:1}) to the asymptotic model (derived in
Appendix \ref{appendix2})
on the characteristics level when $\eps\to0$.
To do so, we measure the difference between the solution of (\ref{eq:1}) for several $\eps$ and the one
obtained with $\eps=10^{-4}$. In Figure \ref{fig:limit}, we show the convergence of the model (\ref{eq:1}) in the limit regime
(with $\Delta t=5\time 10^{-3}$),
for which the rate is equal to one.
Finally, we study the dynamics of the solution for a fixed $\eps=0.1$,
aiming to see the effect from the non-constant magnetic field.
The quantity $\rho^\eps(t,\bx)$ is plotted as a function of $\bx$ at different times
in Figure \ref{fig:2d1} (with $\Delta t=5\time 10^{-3}$).

{\bf Vlasov-Poisson case.\\}
For the Vlasov-Poisson case, we apply the strategy presented in subsection \ref{vpcase}, namely the dynamical scaling EI2 scheme with initial data $(X_k^{1st},Y_k^{1st})$.
In Figure \ref{fig:VP2update}, we show the convergence results in time regarding both  $\rho^\eps$ and $\rho^\eps_\bv$.
For a long-time diagnostic test of the scheme, we consider the energy of the Vlasov-Poisson equation which is conserved as
$$H(t):=\frac{1}{2}\int_{\bR^2}\int_{\Omega}|\bv|^2f^\eps(t,\bx,\bv)d\bx d\bv+\frac{1}{2}\int_\Omega|\bE^\eps
(t,\bx)|^2d\bx\equiv H(0).$$
We compute the numerical energy $H(t)$ by the EI2 scheme with $\Delta t=0.05,\ N_\tau=16$.
In Figure \ref{fig:energy}, we show the relative energy error $|H(t)-H(0)|/H(0)$ till $t_f=100$
for different values of $\varepsilon$.
For the Vlasov-Poisson case, the proposed strategy shows a promising performance.

Finally, we study the convergence rate of the Vlasov-Poisson equation (\ref{eq:1})-\eqref{poisson} to the asymptotic model when $\eps\to0$.
We proceed as in the linear case to plot in Figure \ref{fig:limit}
the convergence of the model (\ref{eq:1}) towards the limit regime. Here again, the rate is close to one.

\begin{figure}[t!]
$$\begin{array}{cc}
\psfig{figure=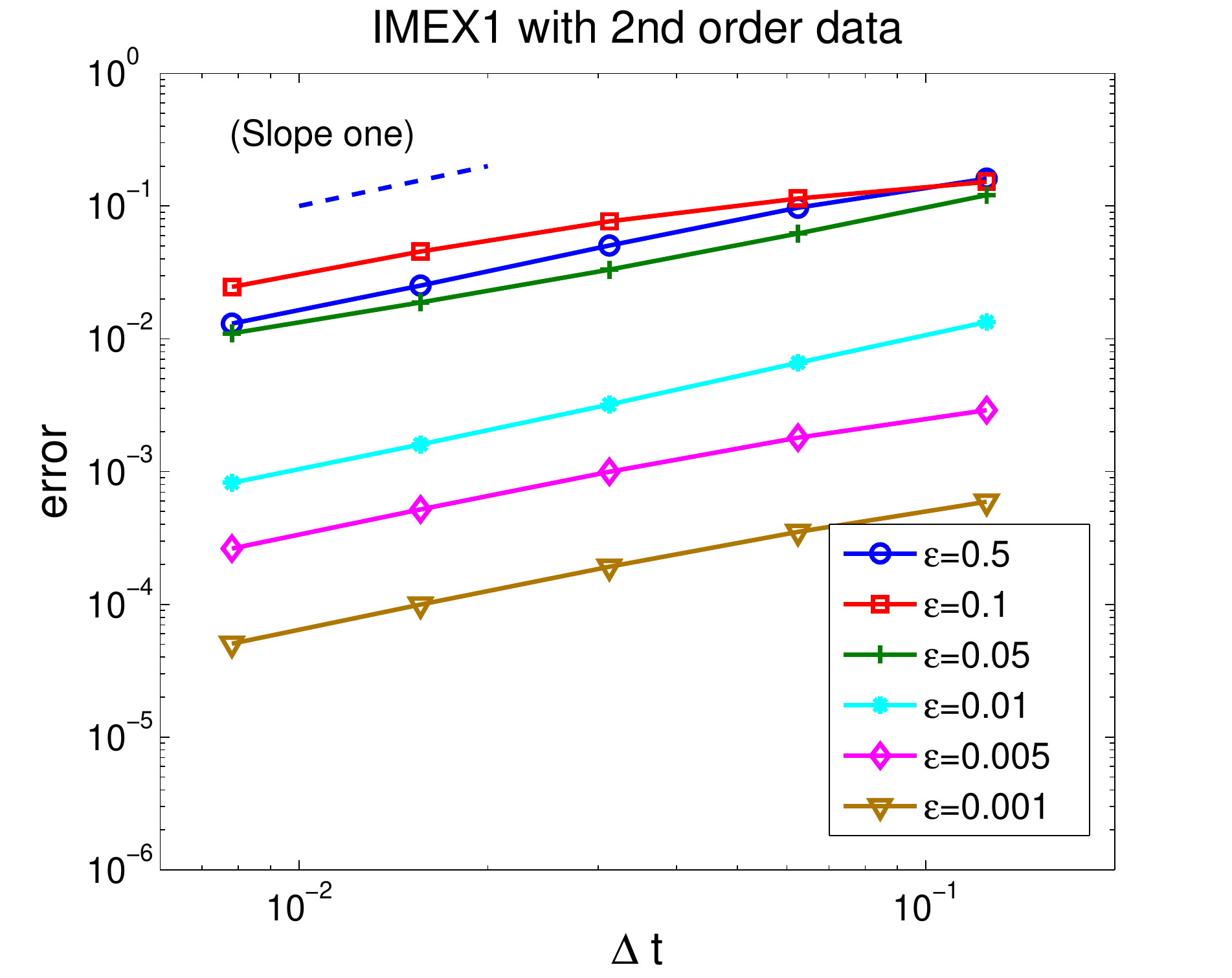,height=5cm,width=6.5cm}&\psfig{figure=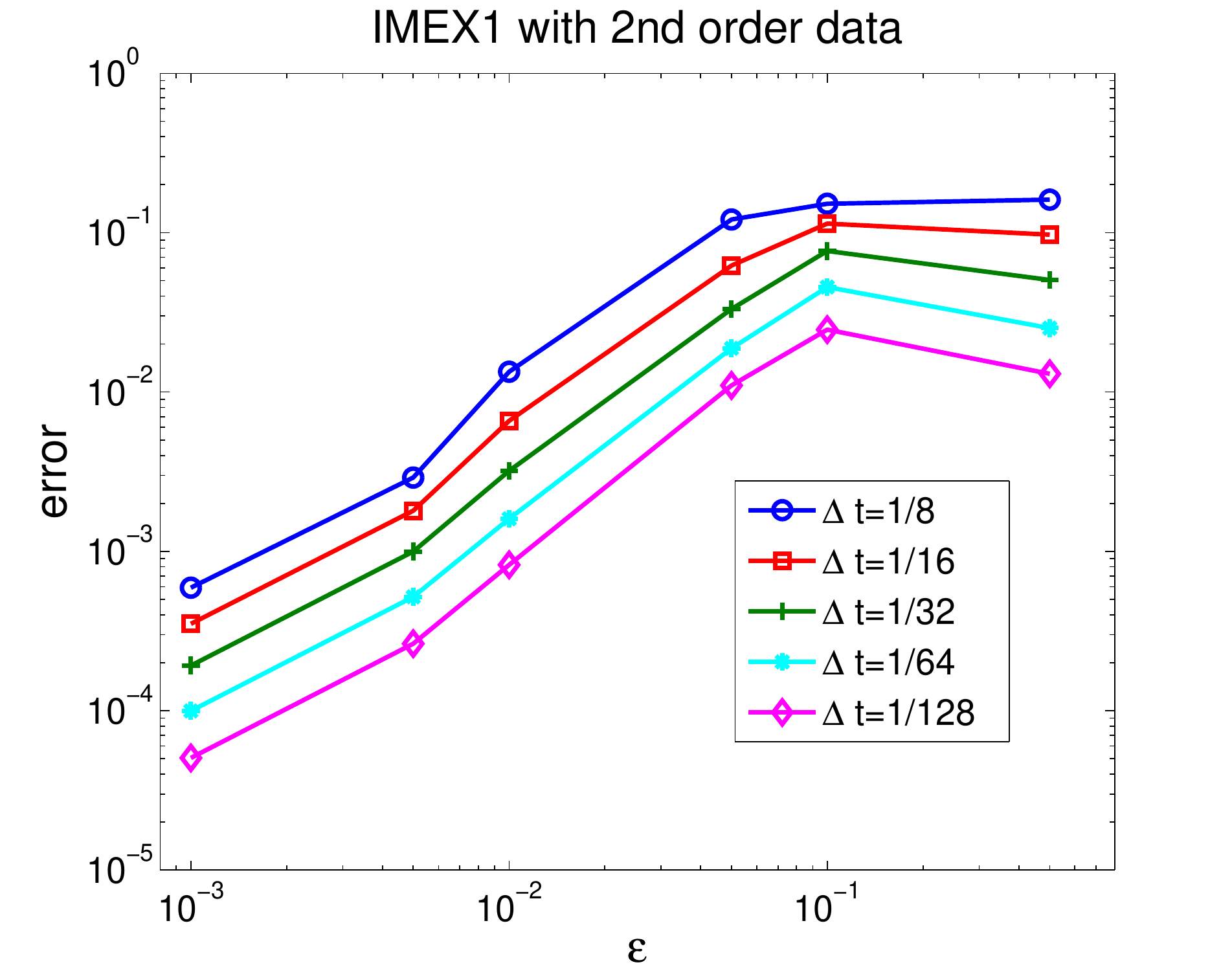,height=5cm,width=6.5cm}
\end{array}$$
\caption{Temporal error of IMEX1 with second order initial data $(X_k^{2nd},Y_k^{1st})$ for given $E$ case: relative maximum error in $\rho^\eps$. }\label{fig:VE1}
\end{figure}

\begin{figure}[t!]
$$\begin{array}{cc}
\psfig{figure=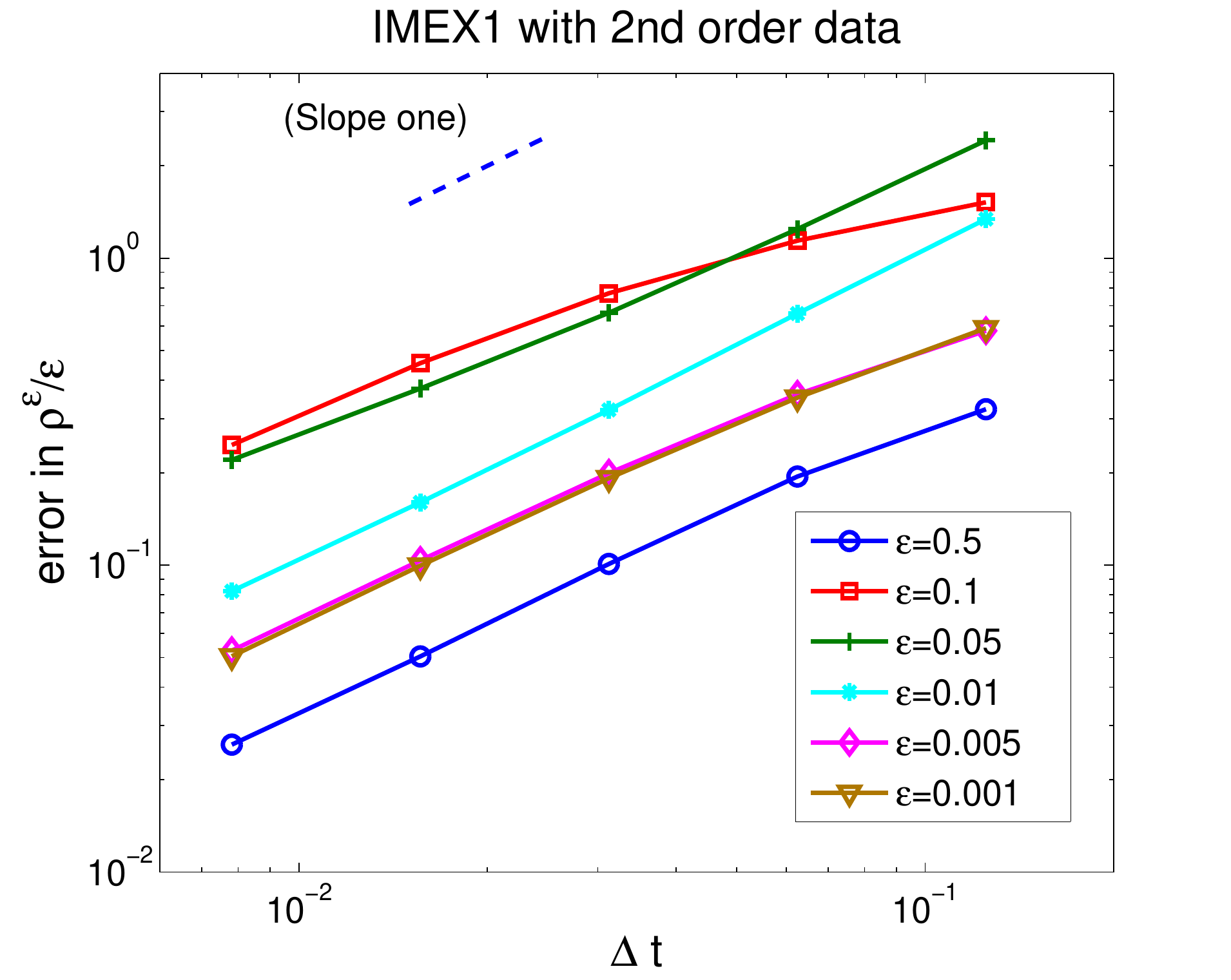,height=5cm,width=6.5cm}&\psfig{figure=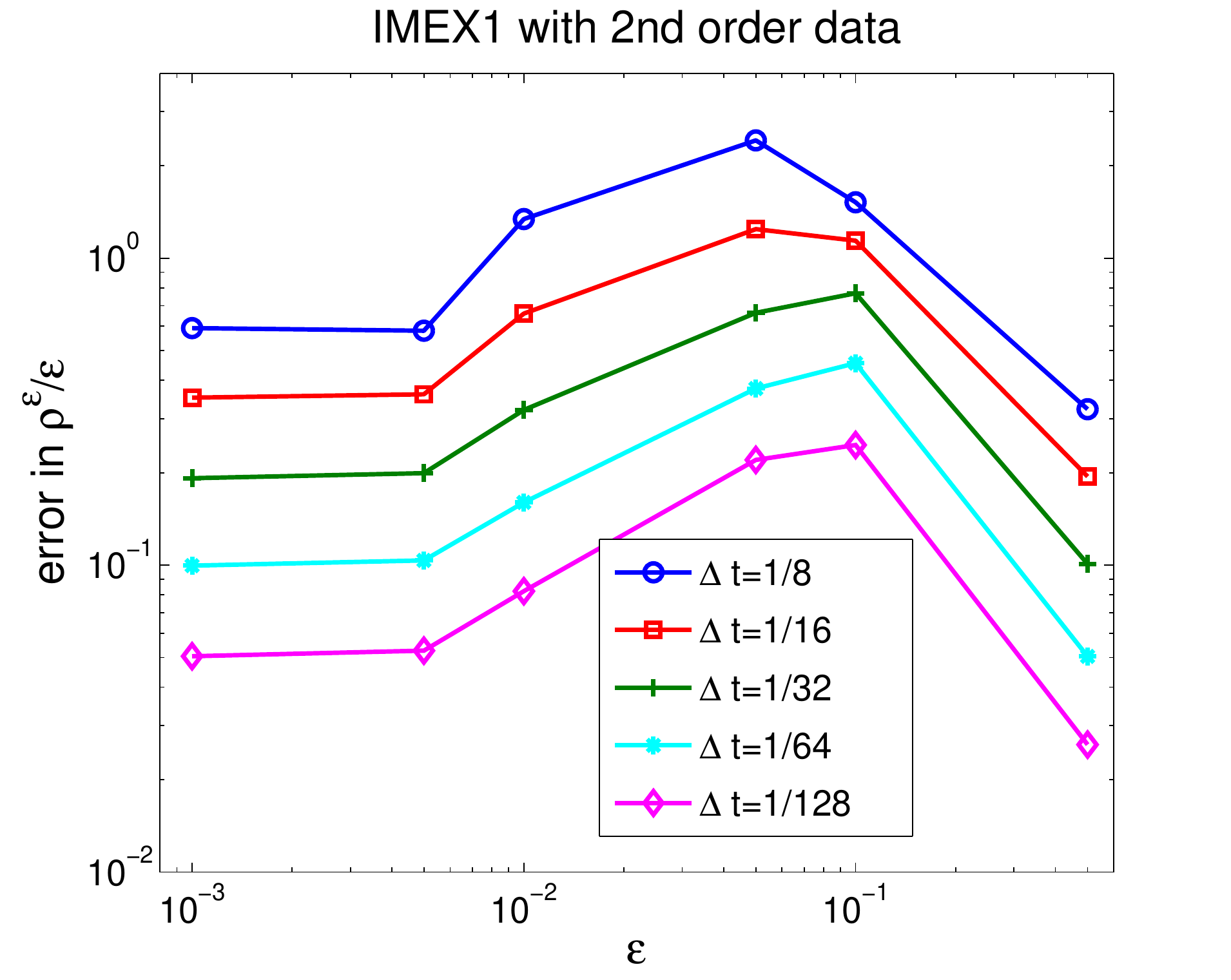,height=5cm,width=6.5cm}\\
\psfig{figure=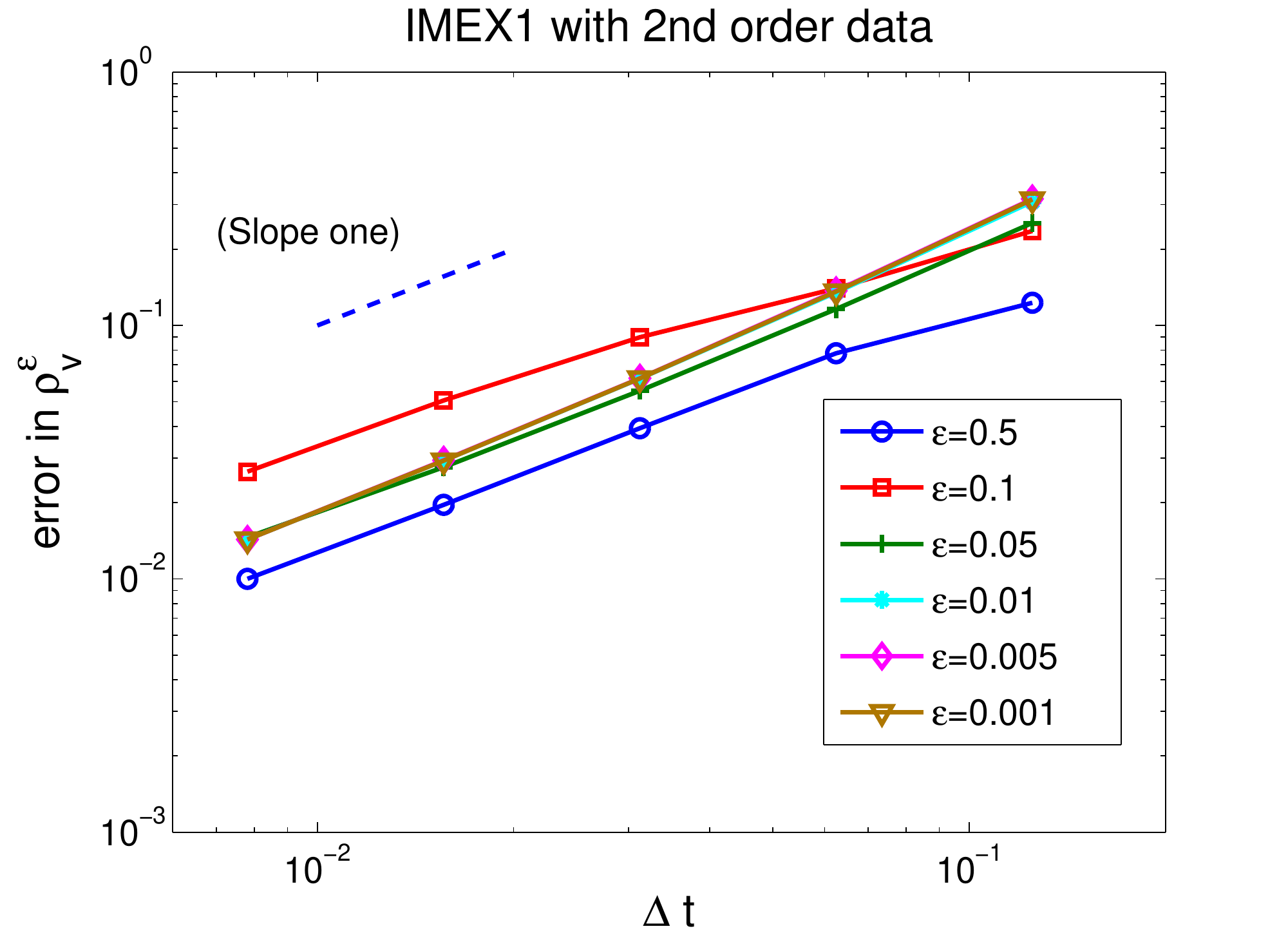,height=5cm,width=6.5cm}&\psfig{figure=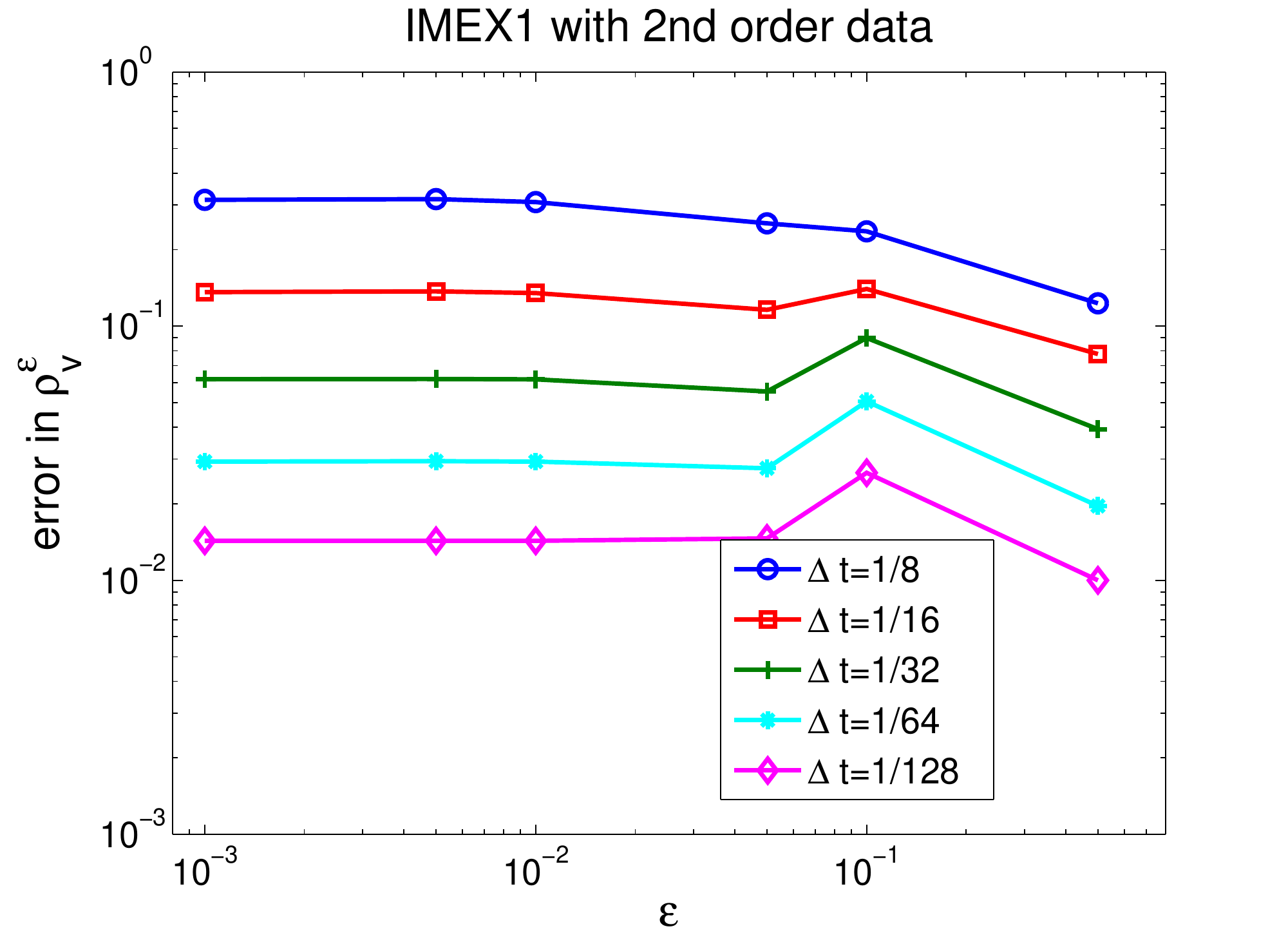,height=5cm,width=6.5cm}
\end{array}$$
\caption{Temporal error of IMEX1 with second order initial data $(X_k^{2nd},Y_k^{1st})$ for given $E$ case: relative maximum error in $\rho^\eps/\eps$ (first row) and in $\rho_\bv^\eps$ (second row).}\label{fig:VE1super}
\end{figure}

\begin{figure}[t!]
$$\begin{array}{cc}
\psfig{figure=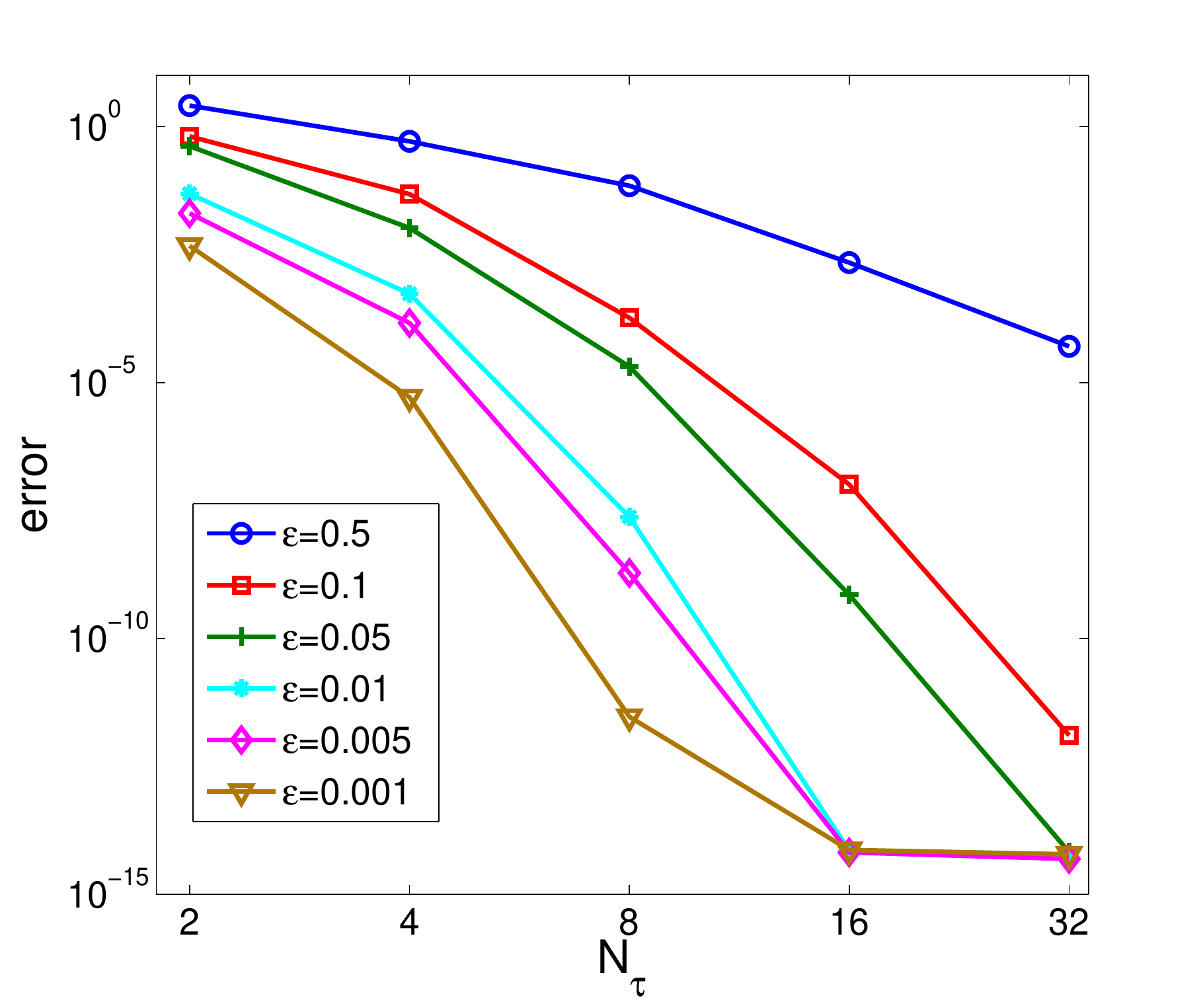,height=5cm,width=6.5cm}&\psfig{figure=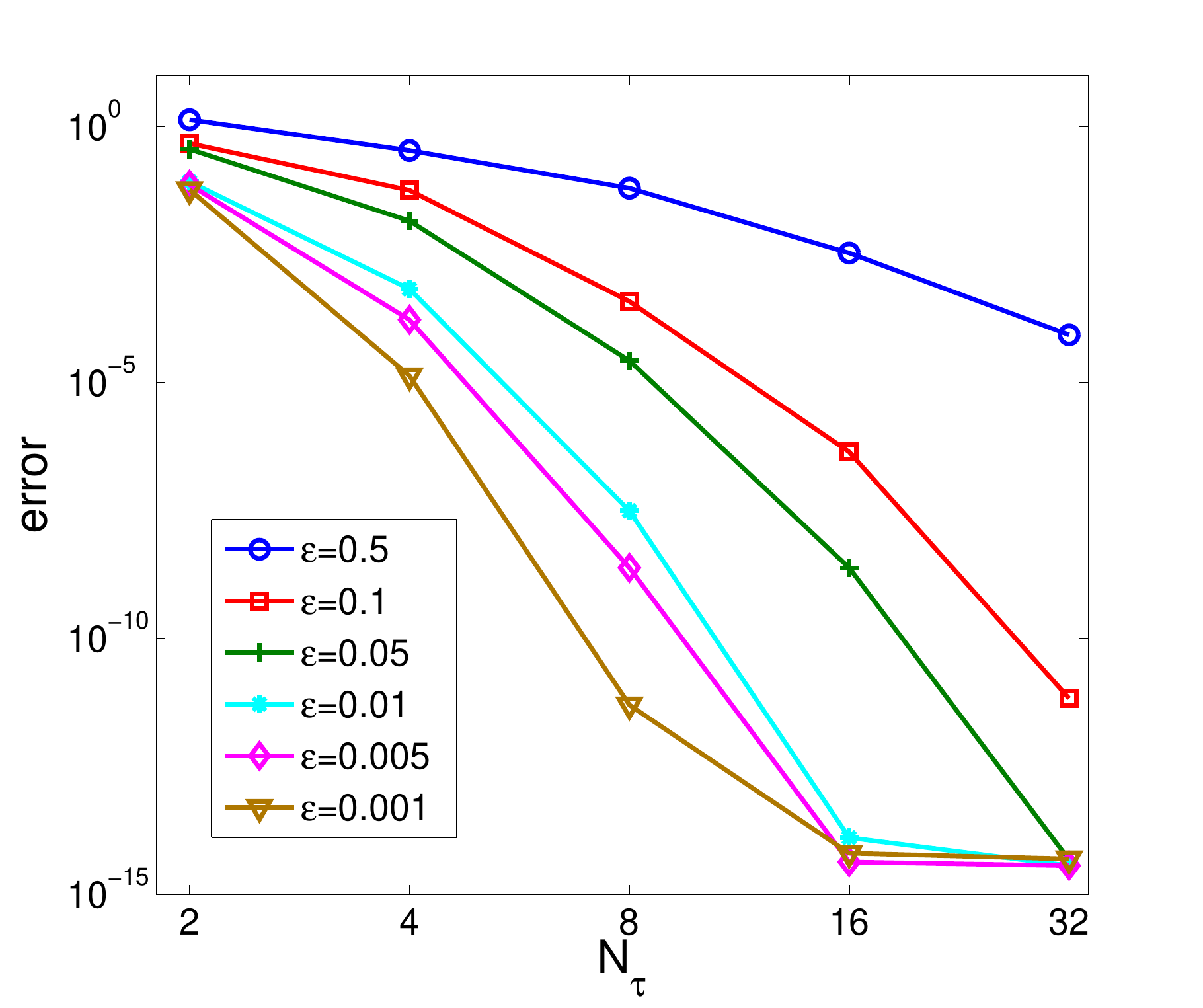,height=5cm,width=6.5cm}\end{array}$$
\caption{Error in $\tau$ of IMEX1 with  second order initial data $(X_k^{2nd},Y_k^{1st})$ for given $E$ case: relative maximum error in $\rho^\eps$ (left) and  in $\rho_\bv^\eps$ (right) with respect to the number of grids $N_\tau$.}\label{fig:VE1tau}
\end{figure}

\begin{figure}[t!]
$$\begin{array}{ccc}
\psfig{figure=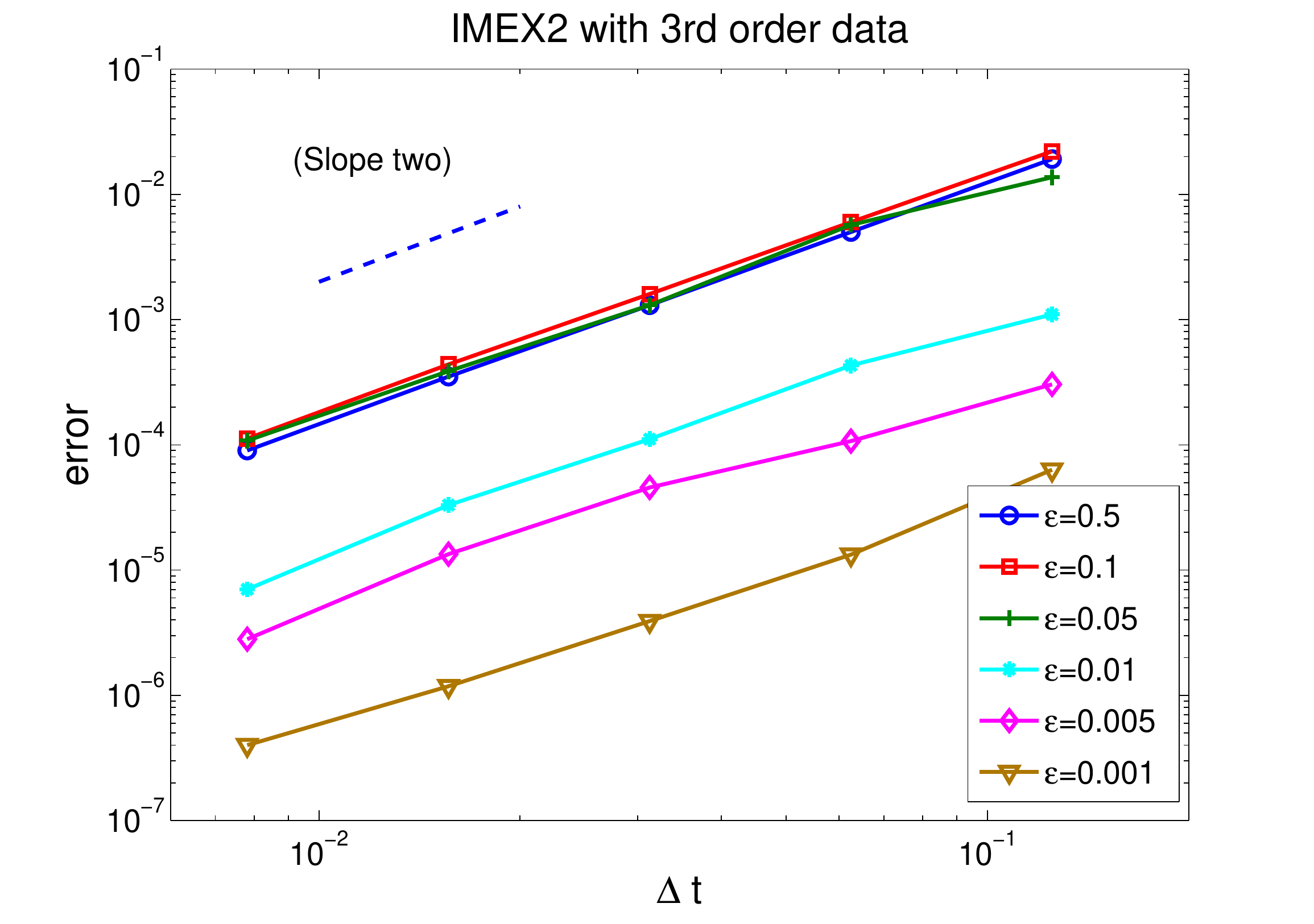,height=5.0cm,width=6.6cm}&\psfig{figure=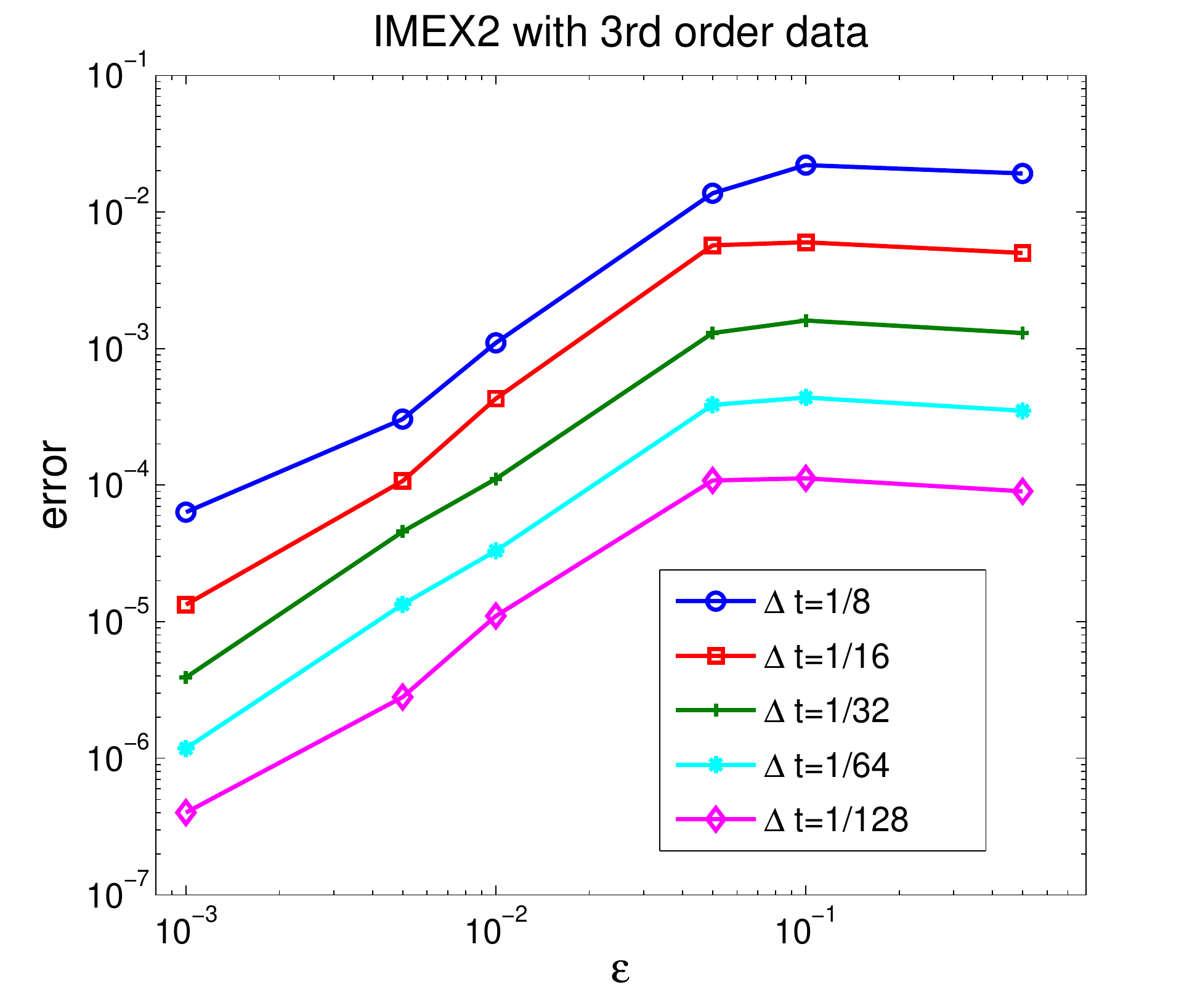,height=5.0cm,width=6.6cm}
\end{array}$$
\caption{Temporal error of IMEX2 with third order initial data $(X_k^{3rd},Y_k^{2nd})$ for given $E$ case: relative maximum error in $\rho^\eps$.}\label{fig:VE3}
\end{figure}

\begin{figure}[t!]
$$\begin{array}{cc}
\psfig{figure=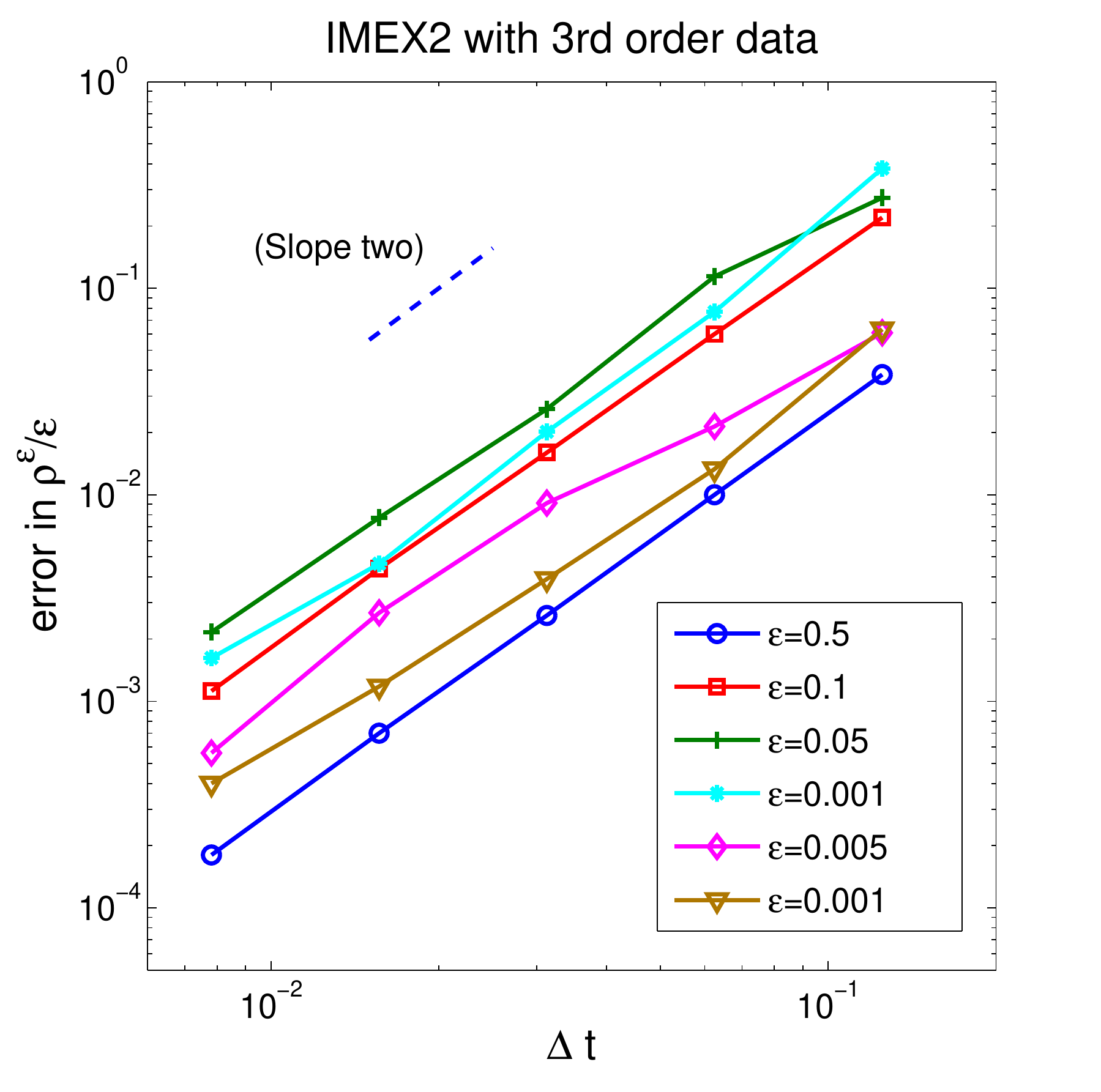,height=5cm,width=6.6cm}&\psfig{figure=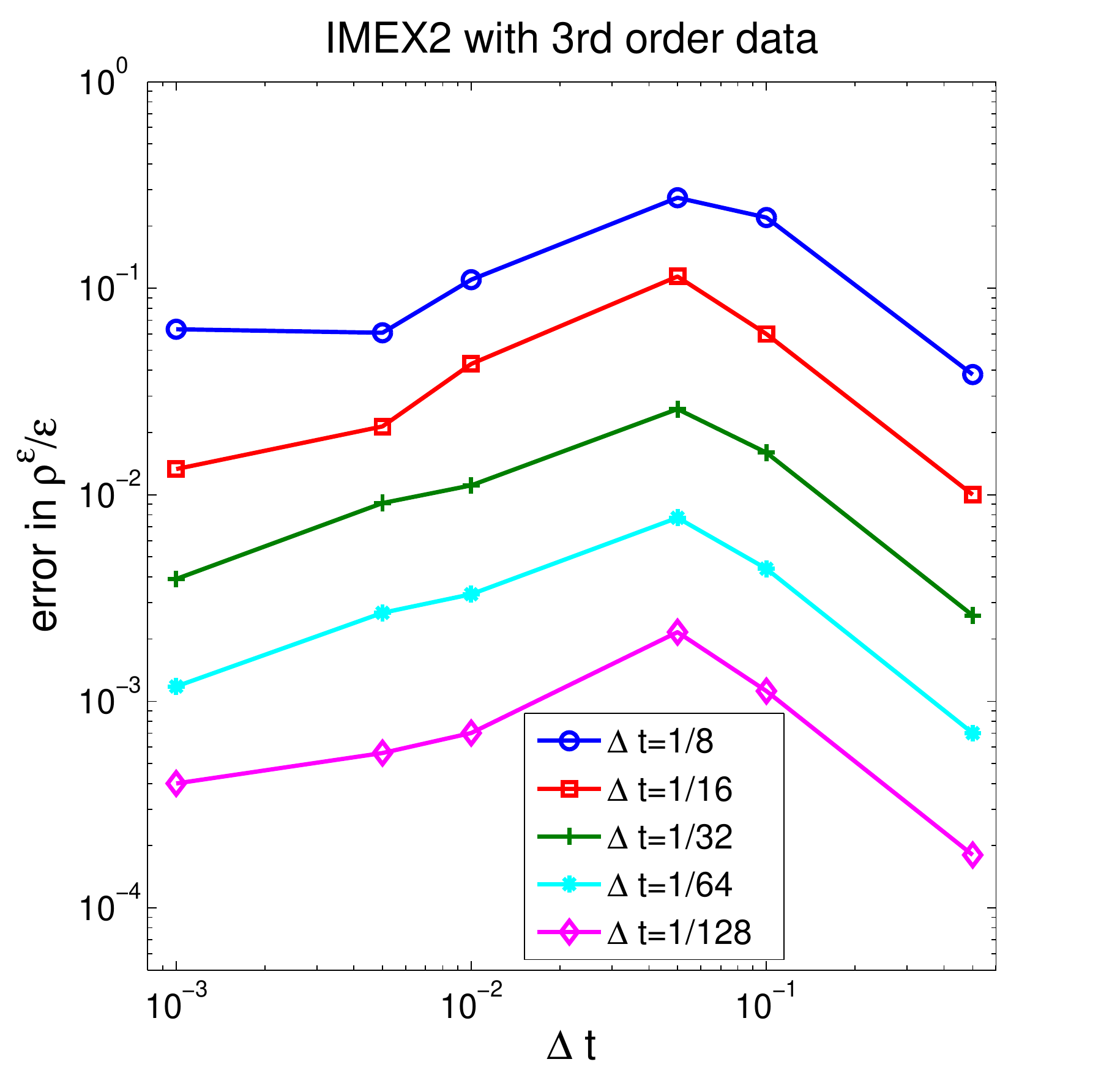,height=5cm,width=6.6cm}
\end{array}$$
\caption{Temporal error of IMEX2 with  third order initial data $(X_k^{3rd},Y_k^{2nd})$ for given $E$ case: relative maximum error in $\rho^\eps/\eps$.}\label{fig:super}
\end{figure}

\begin{figure}[t!]
$$\begin{array}{cc}
\psfig{figure=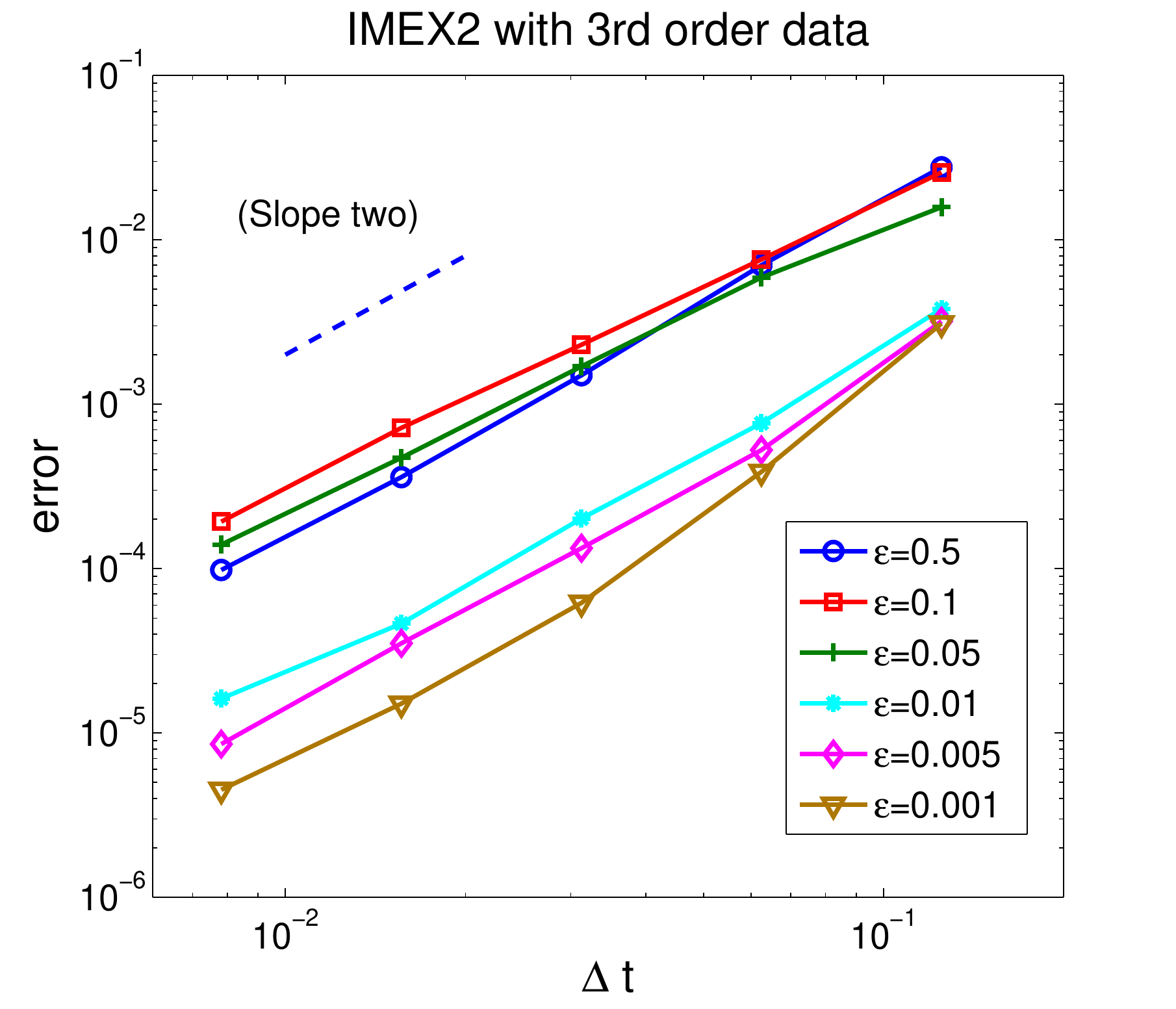,height=5cm,width=6.6cm}&\psfig{figure=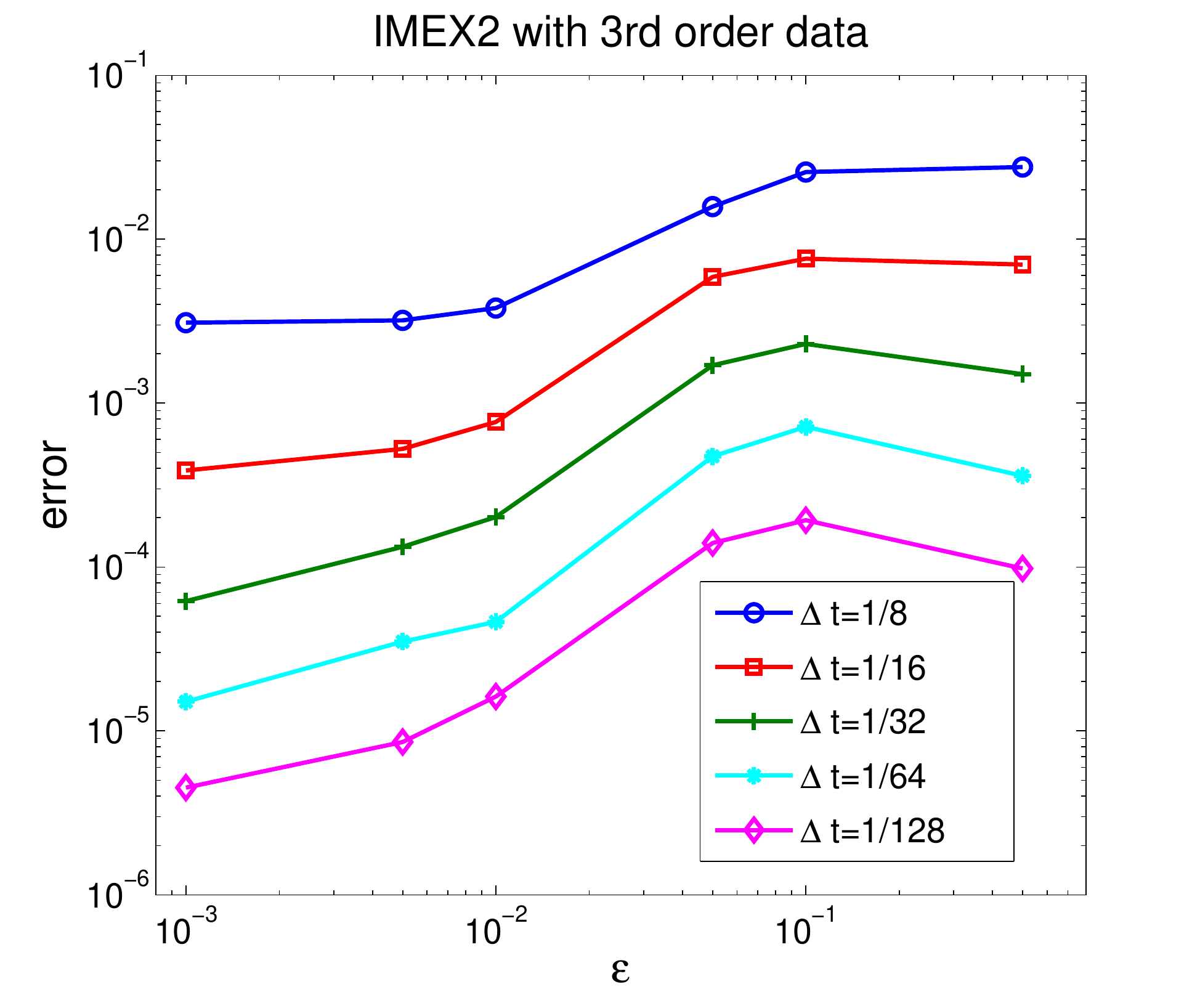,height=5cm,width=6.6cm}
\end{array}$$
\caption{Temporal error of IMEX2 with  third order initial data $(X_k^{3rd},Y_k^{2nd})$ for given $E$ case: relative maximum error in $\rho^\eps_\bv$.}\label{fig:VE2}
\end{figure}

\begin{figure}[h!]
$$\begin{array}{cc}
\psfig{figure=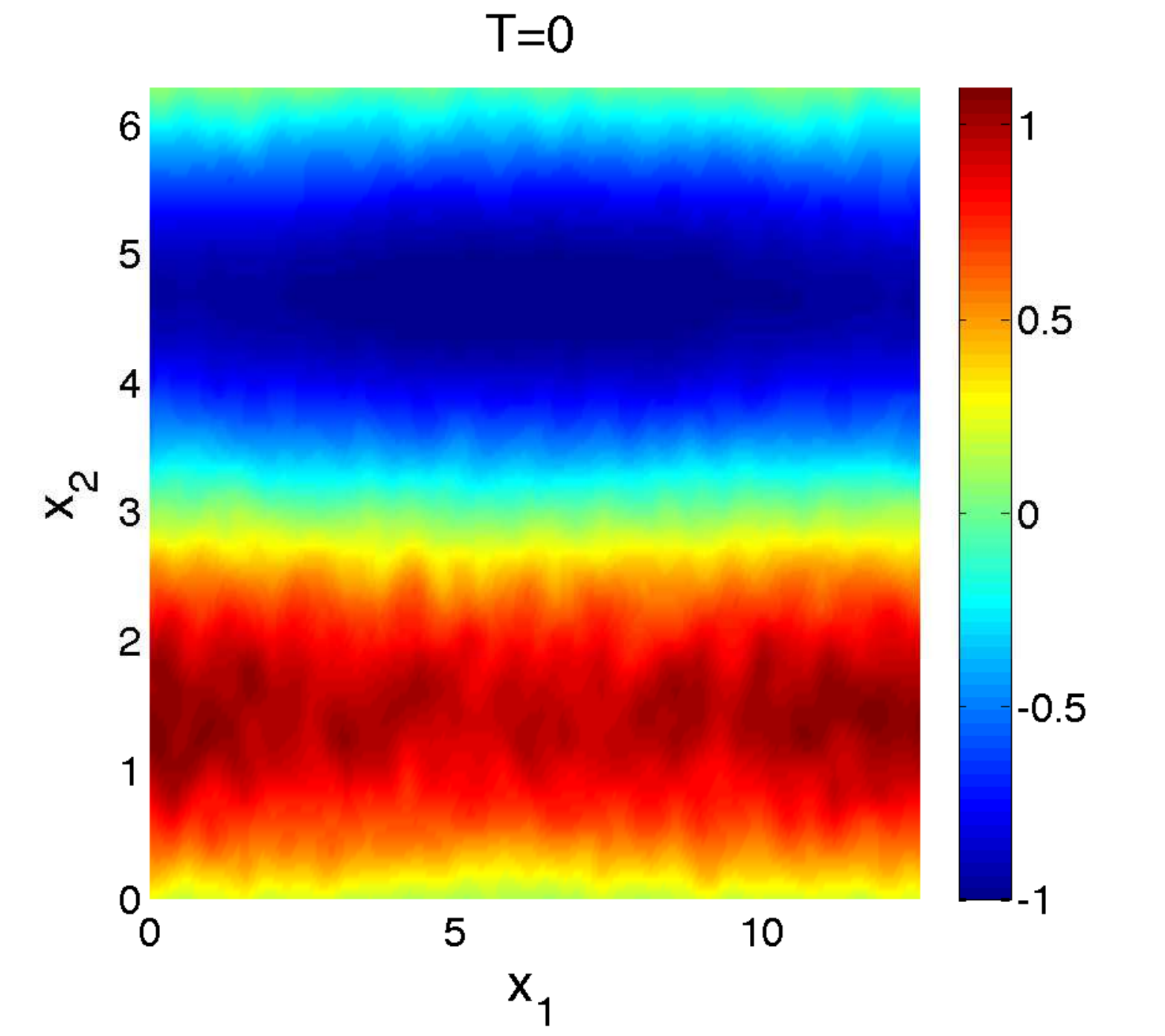,height=5cm,width=6.5cm}&\psfig{figure=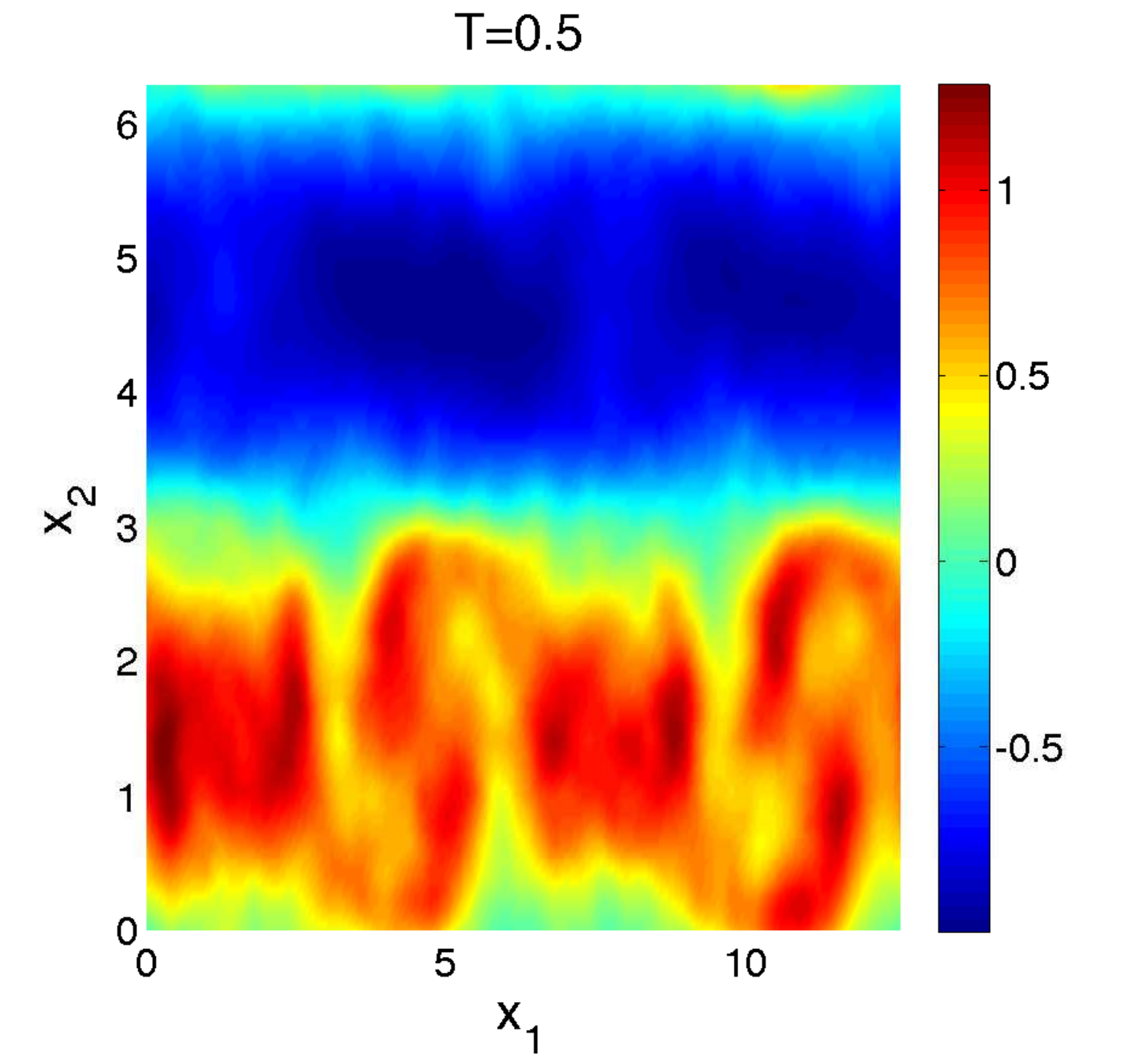,height=5cm,width=6.5cm}\\
\psfig{figure=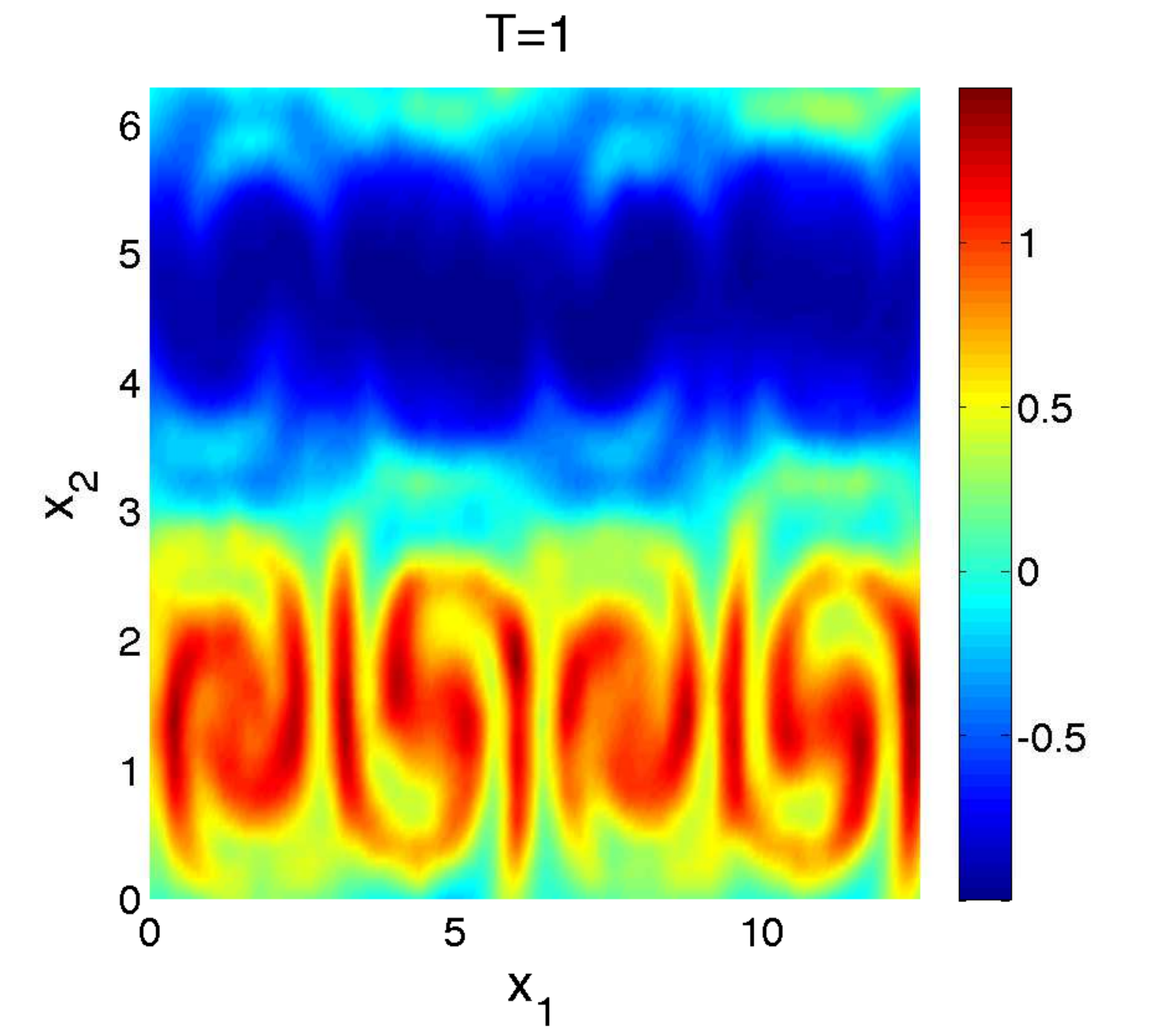,height=5cm,width=6.5cm}&\psfig{figure=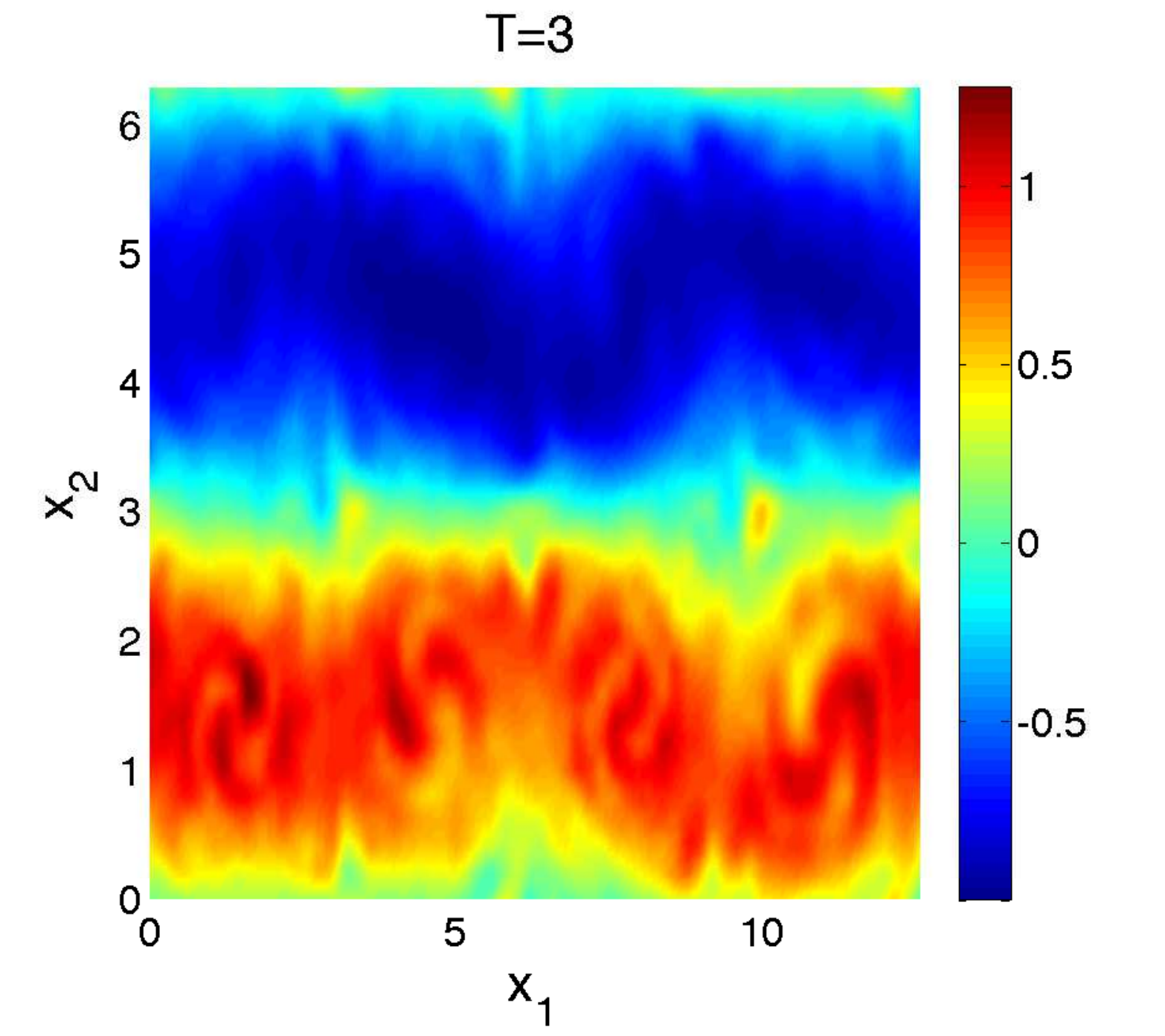,height=5cm,width=6.5cm}
\end{array}$$
\caption{2D plot of $\rho^\eps(t,\bx)$ at different $t$ for given $E$ case with $\eps=0.1$.}\label{fig:2d1}
\end{figure}

\begin{figure}[t!]
$$\begin{array}{cc}
\psfig{figure=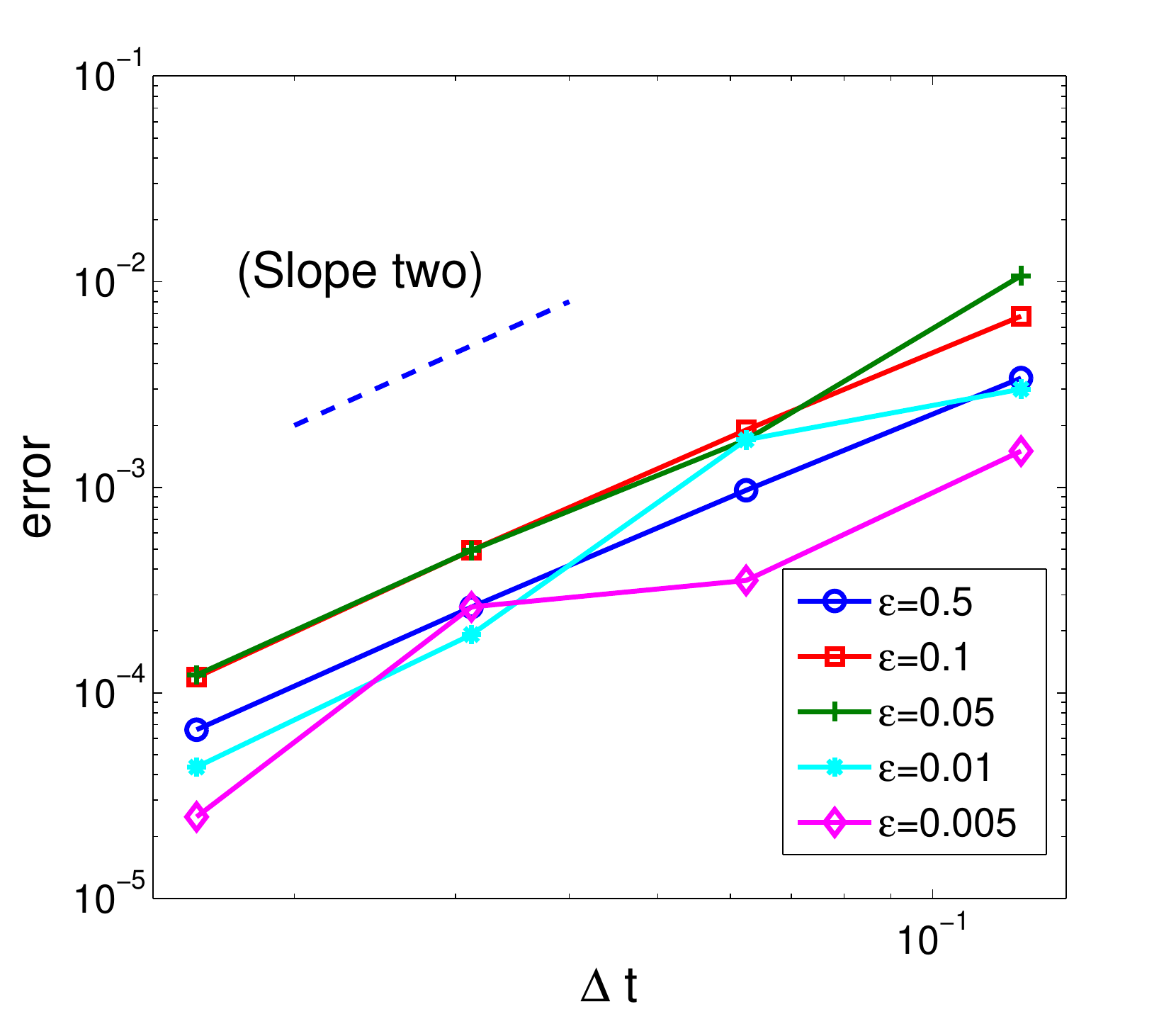,height=5cm,width=6.5cm}&\psfig{figure=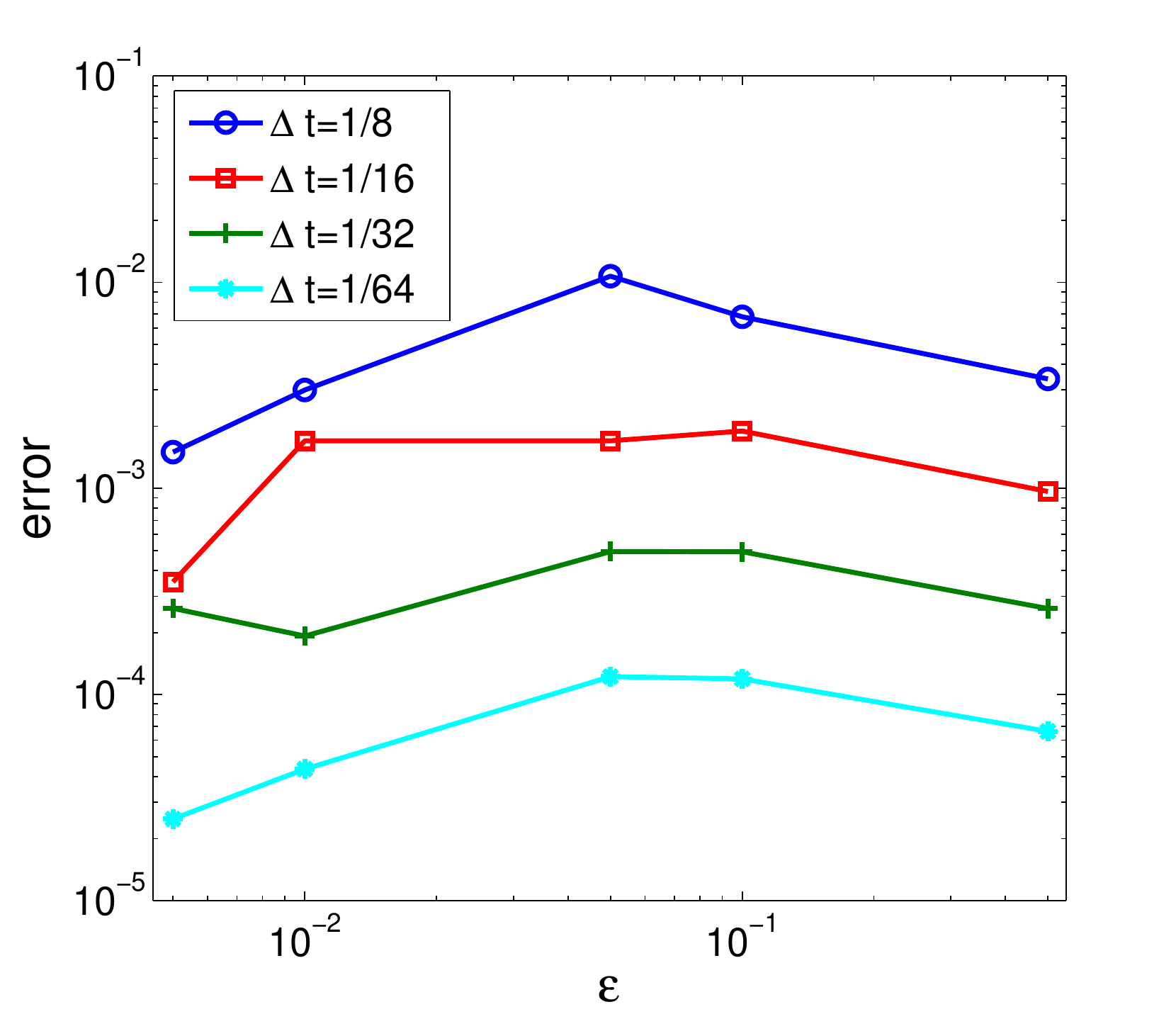,height=5cm,width=6.5cm}\\
\psfig{figure=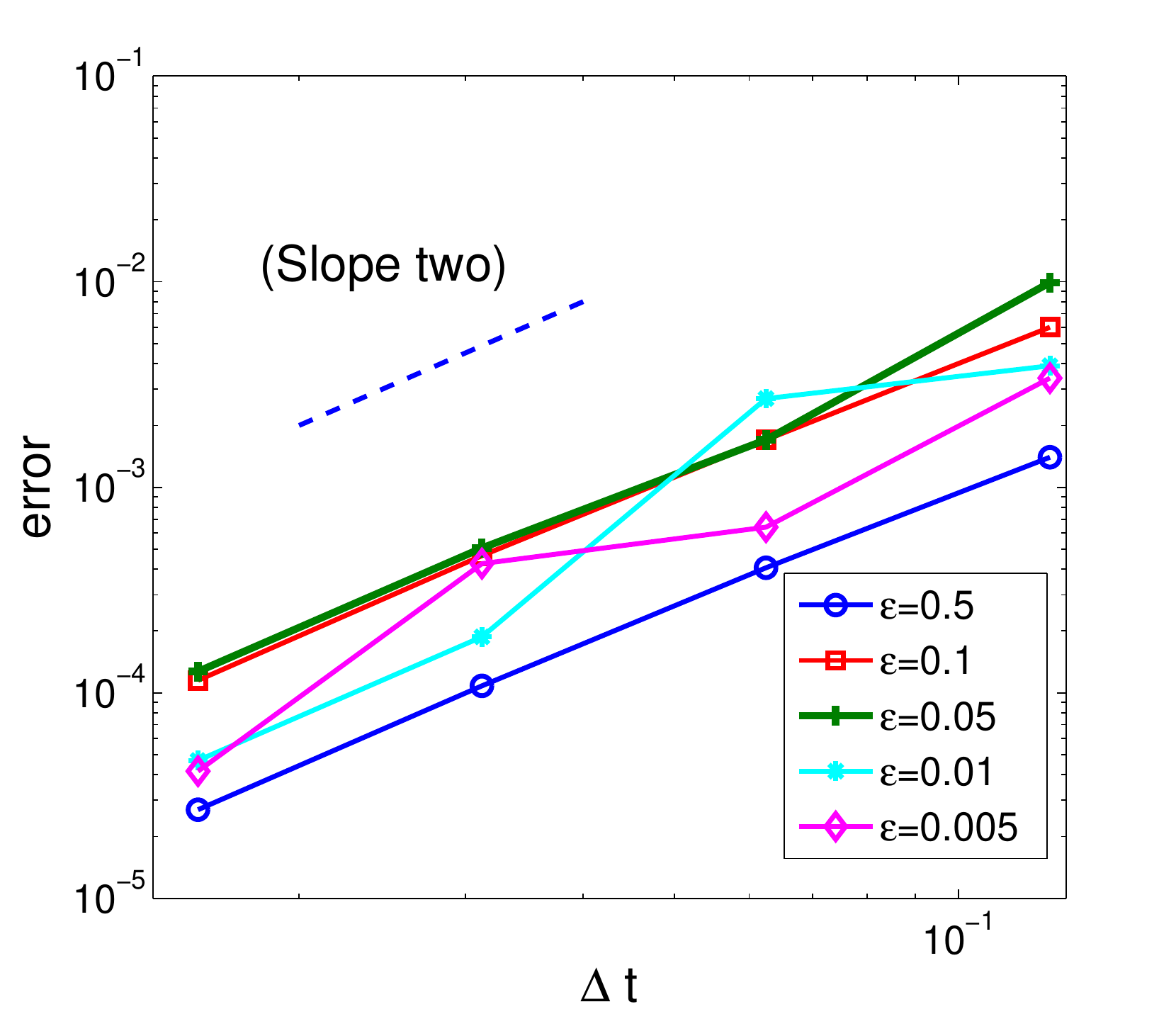,height=5cm,width=6.5cm}&\psfig{figure=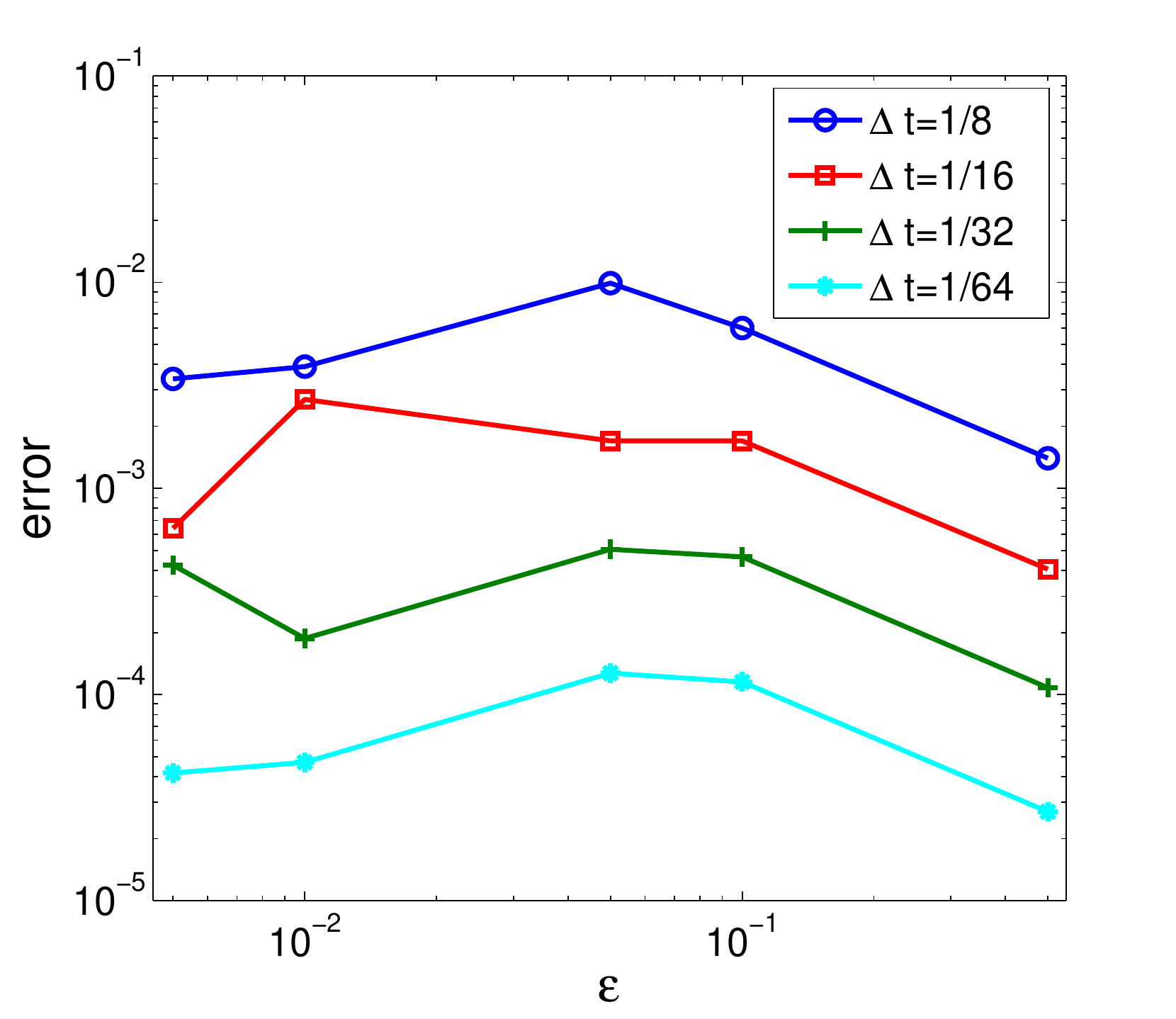,height=5cm,width=6.5cm}
\end{array}$$
\caption{Temporal error of EI2 with $(X_k^{1st},Y_k^{1st})$ for Vlasov-Poisson case:
relative maximum error in $\rho^\eps$ (first row) and $\rho_\bv^\eps$ (second row). }\label{fig:VP2update}
\end{figure}

\begin{figure}[t!]
$$\begin{array}{ccc}
\psfig{figure=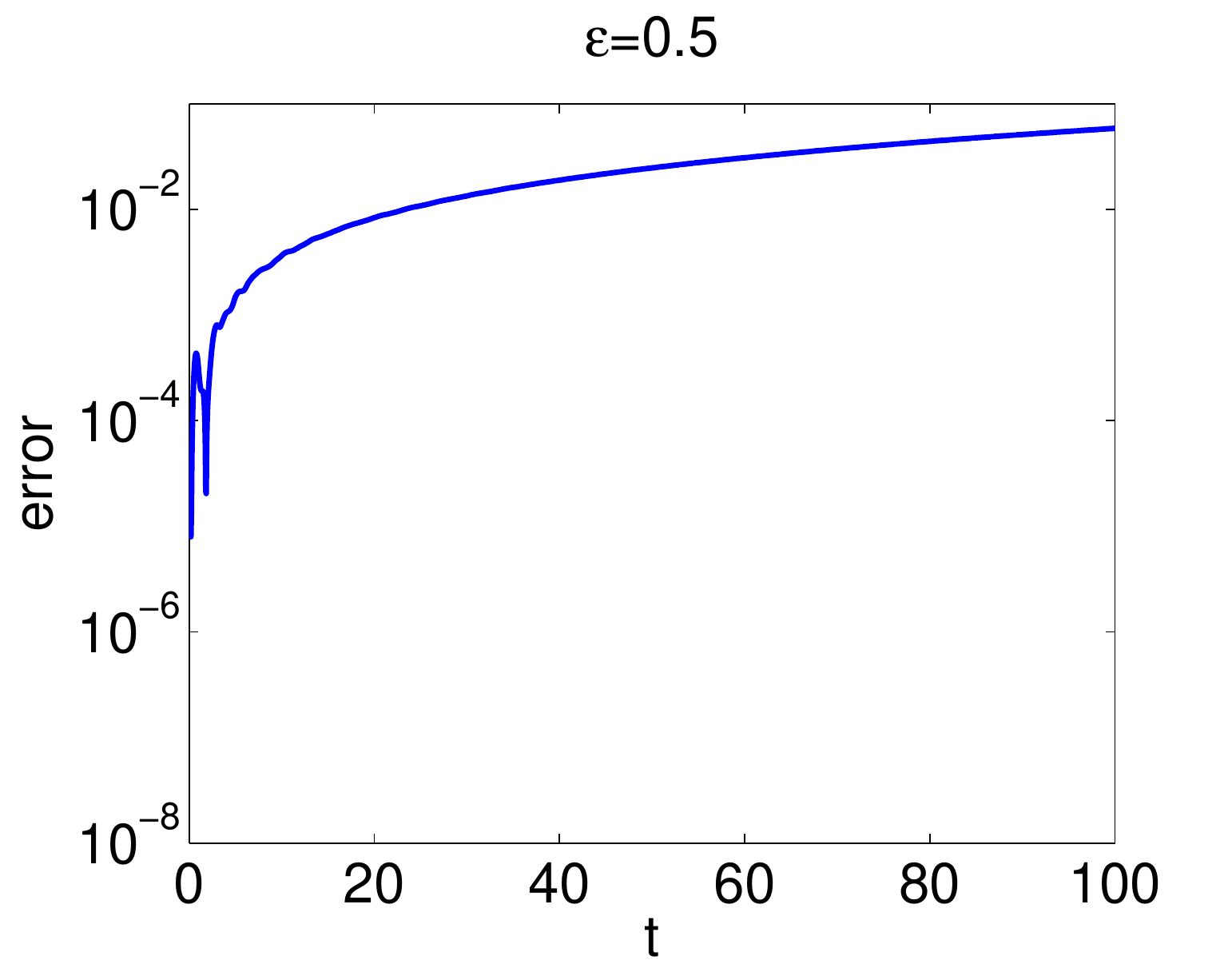,height=5cm,width=4cm}&\psfig{figure=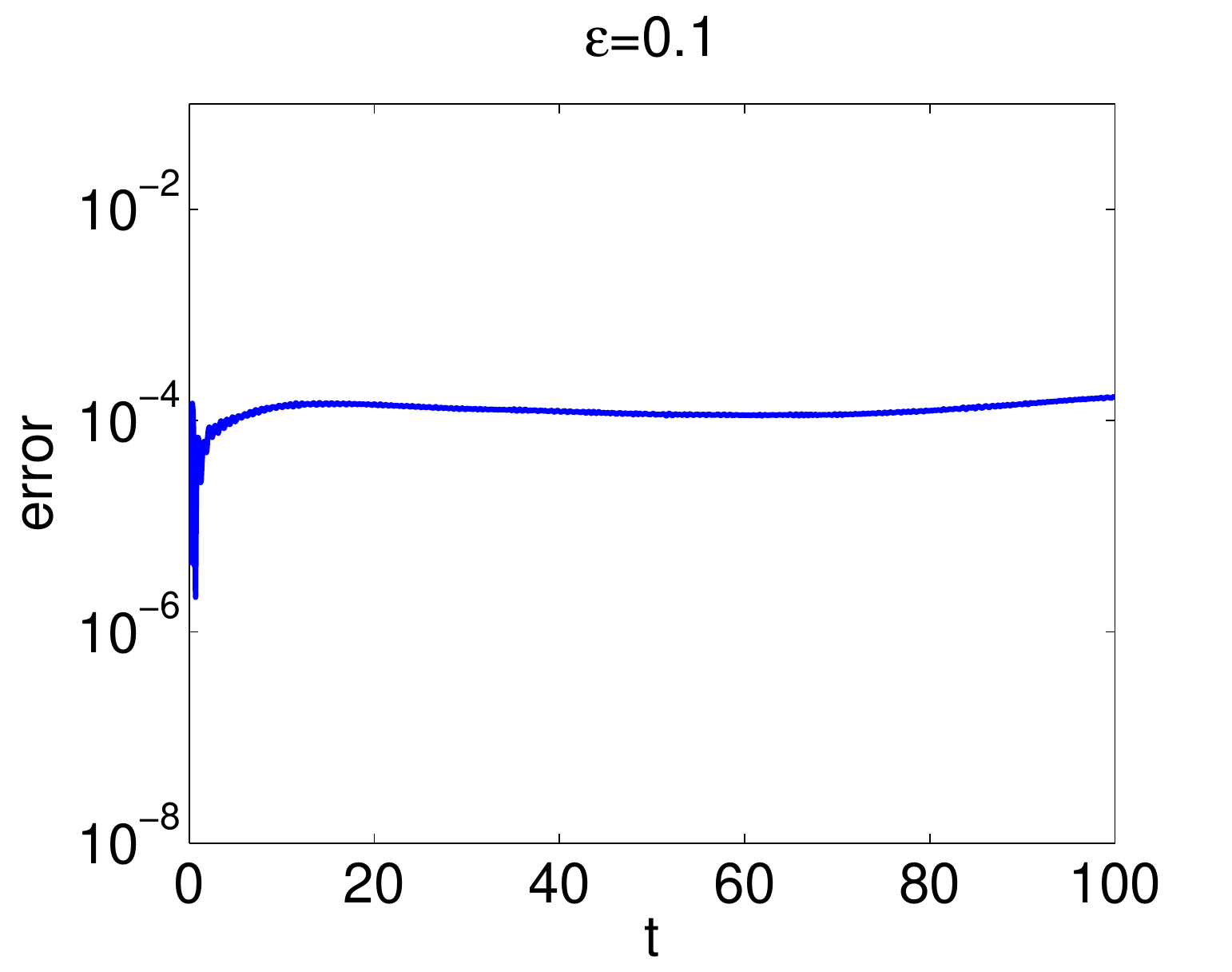,height=5cm,width=4cm}
&\psfig{figure=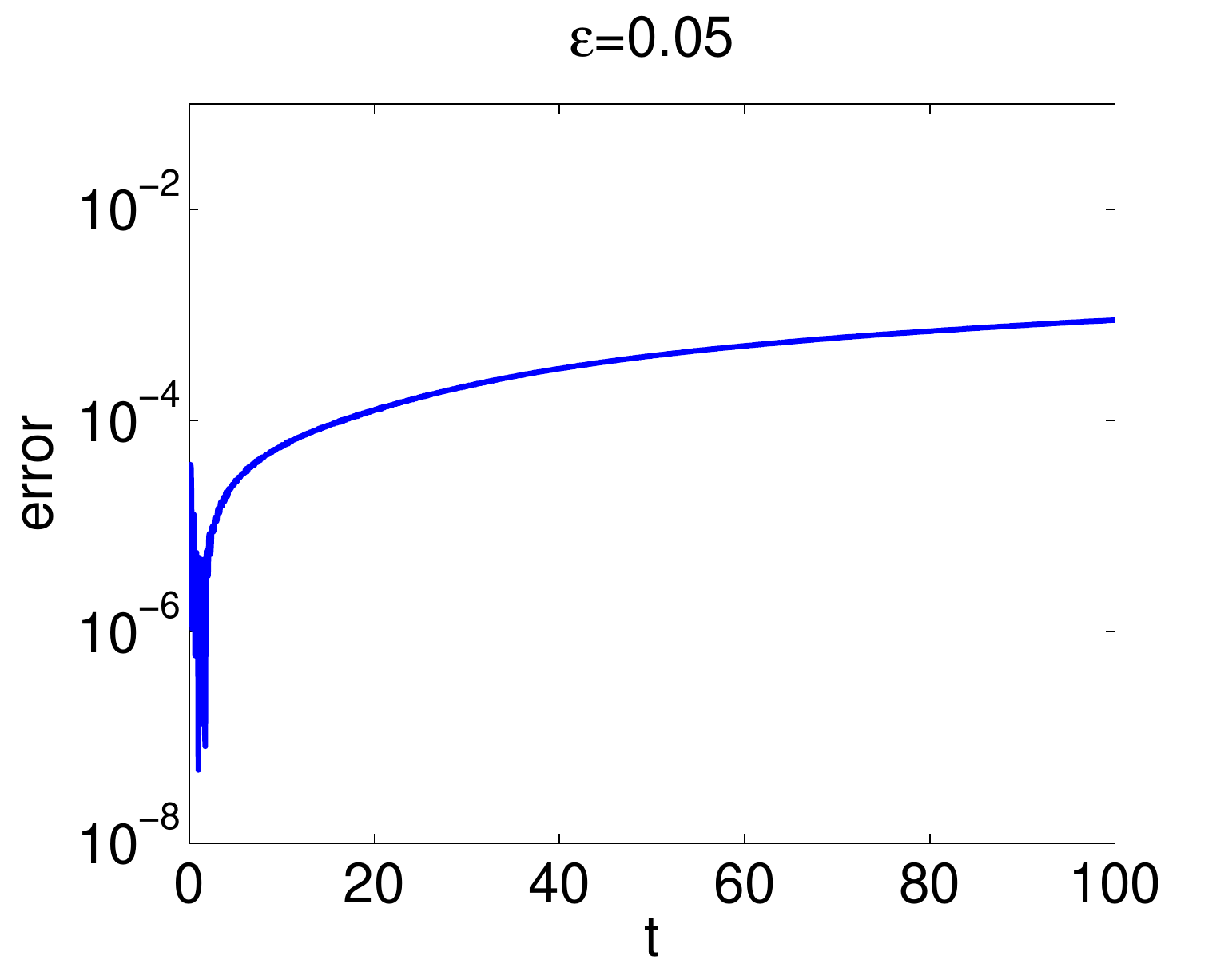,height=5cm,width=4cm}\\
\psfig{figure=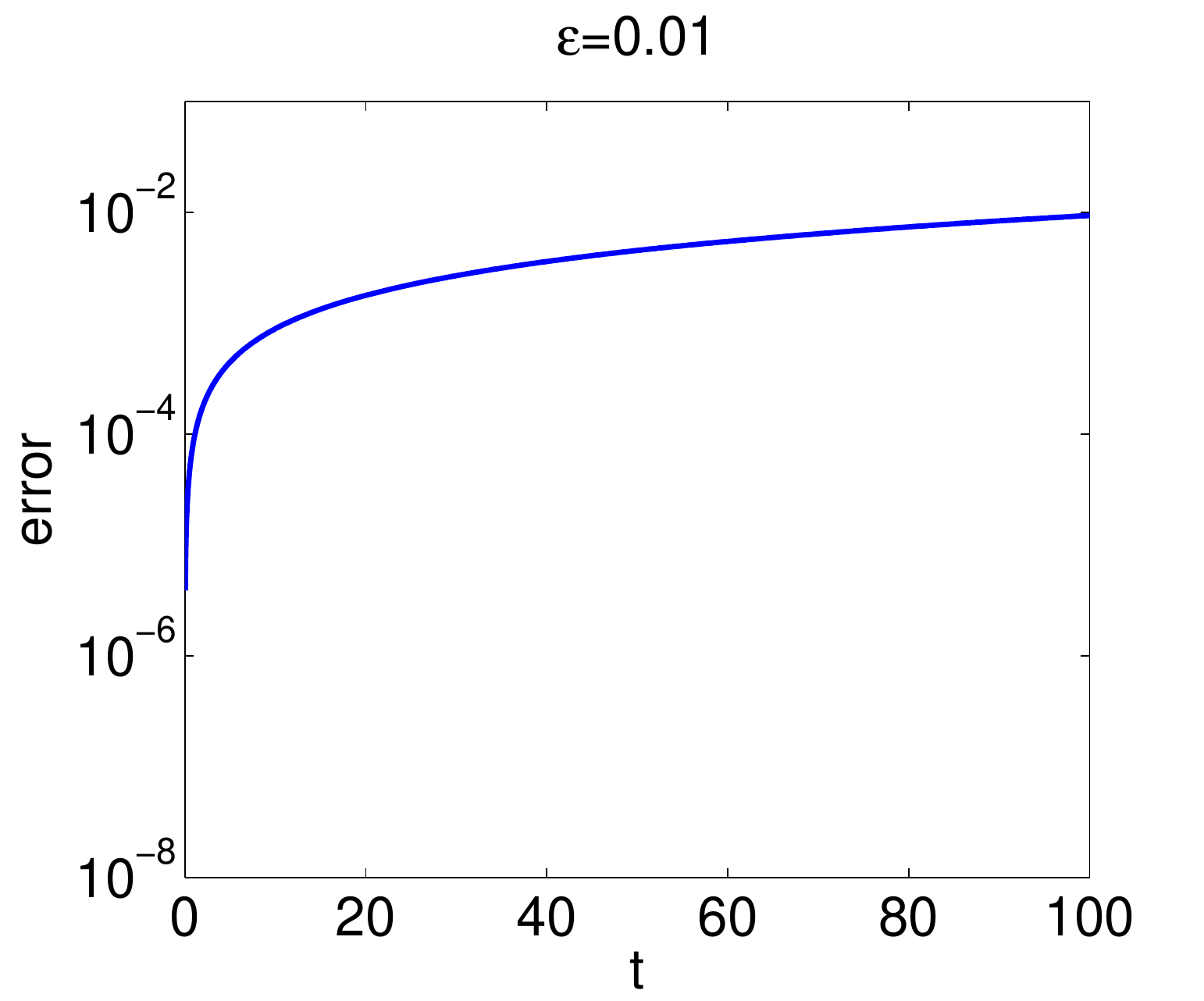,height=5cm,width=4cm}&\psfig{figure=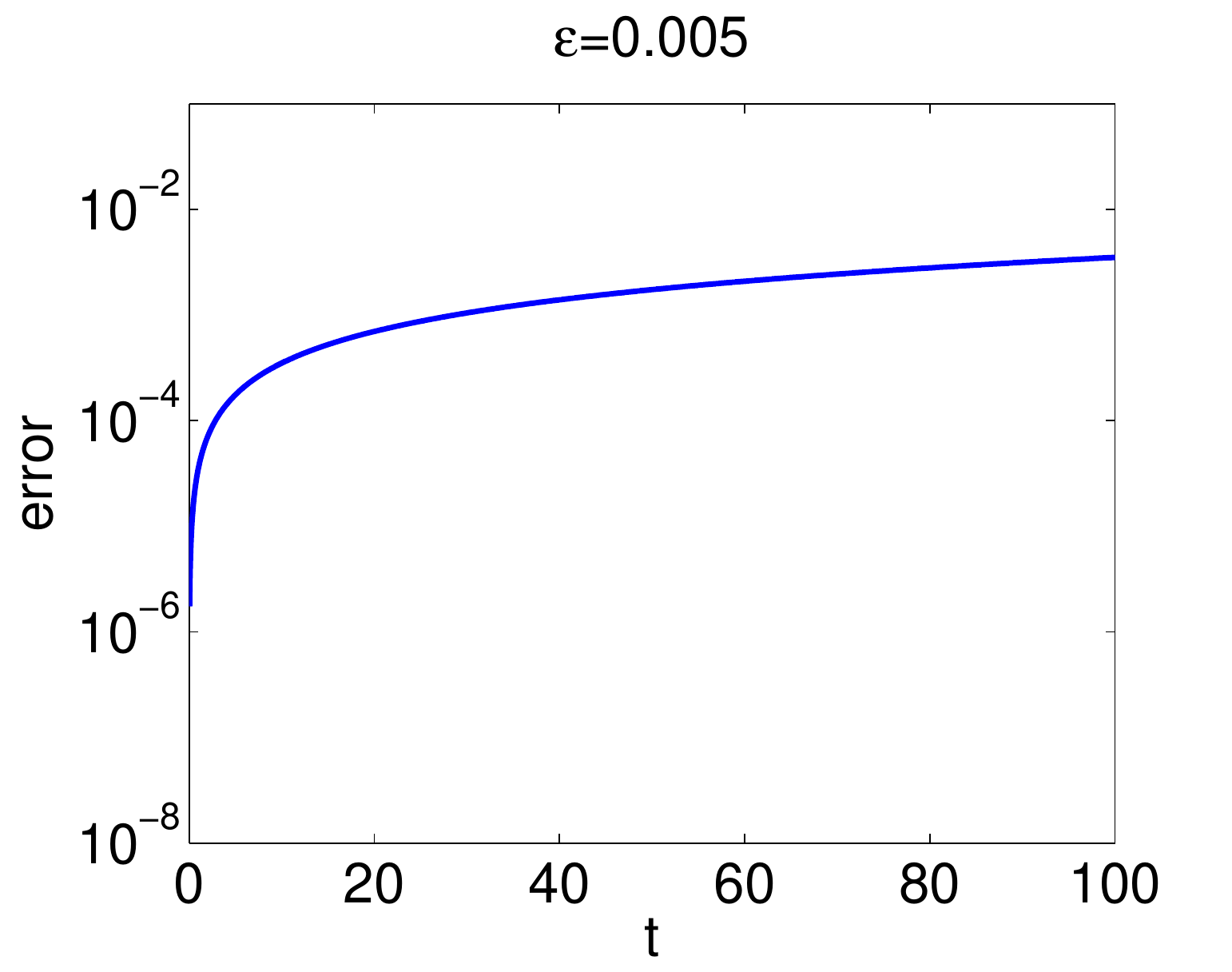,height=5cm,width=4cm}
&\psfig{figure=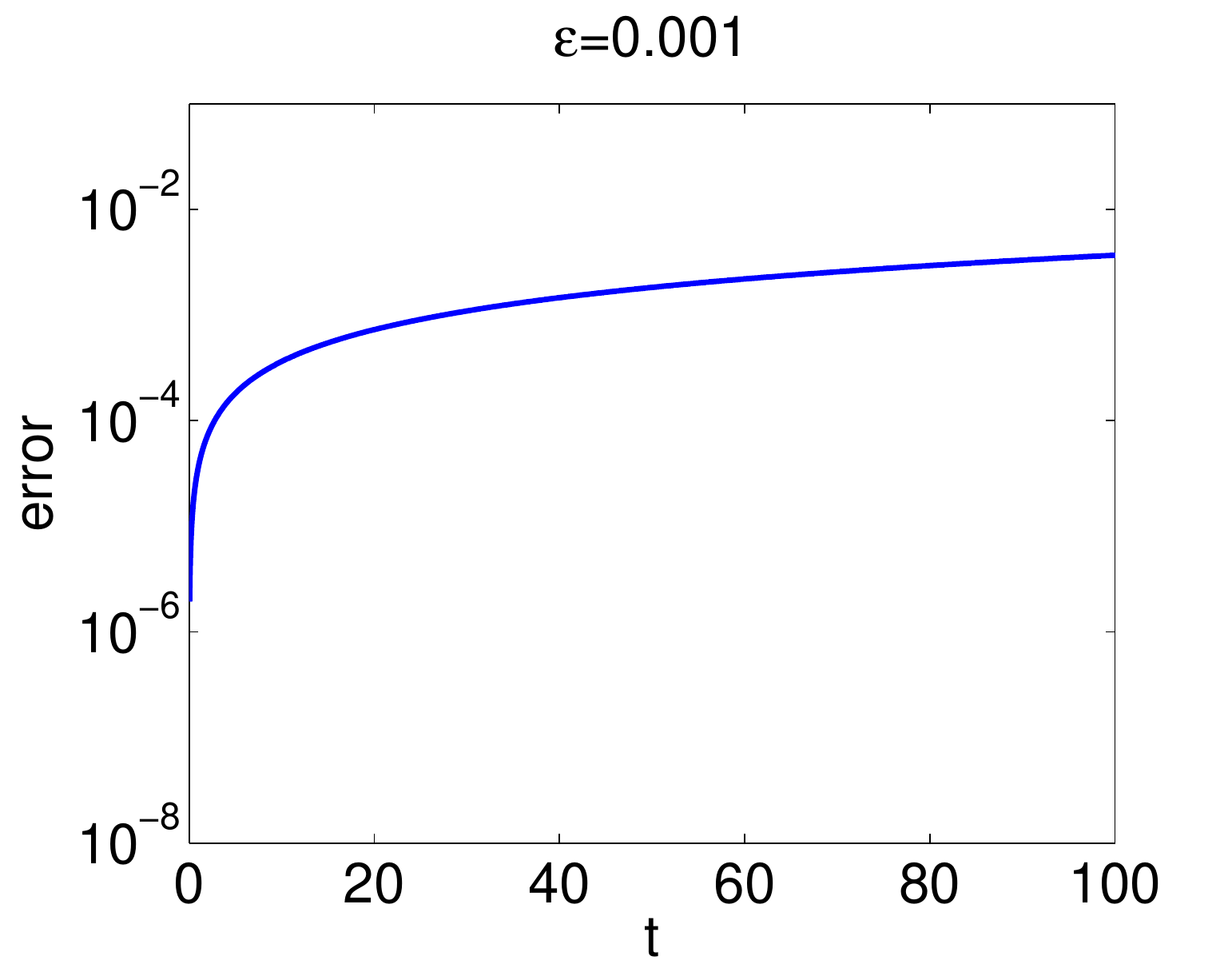,height=5cm,width=4cm}
\end{array}$$
\caption{Relative energy error $|H(t)-H(0)|/H(0)$ of the EI2 scheme with $\Delta t=0.05$ for the Vlasov-Poisson case under serval $\eps$.}\label{fig:energy}
\end{figure}

\begin{figure}[t!]
$$\begin{array}{cc}
\psfig{figure=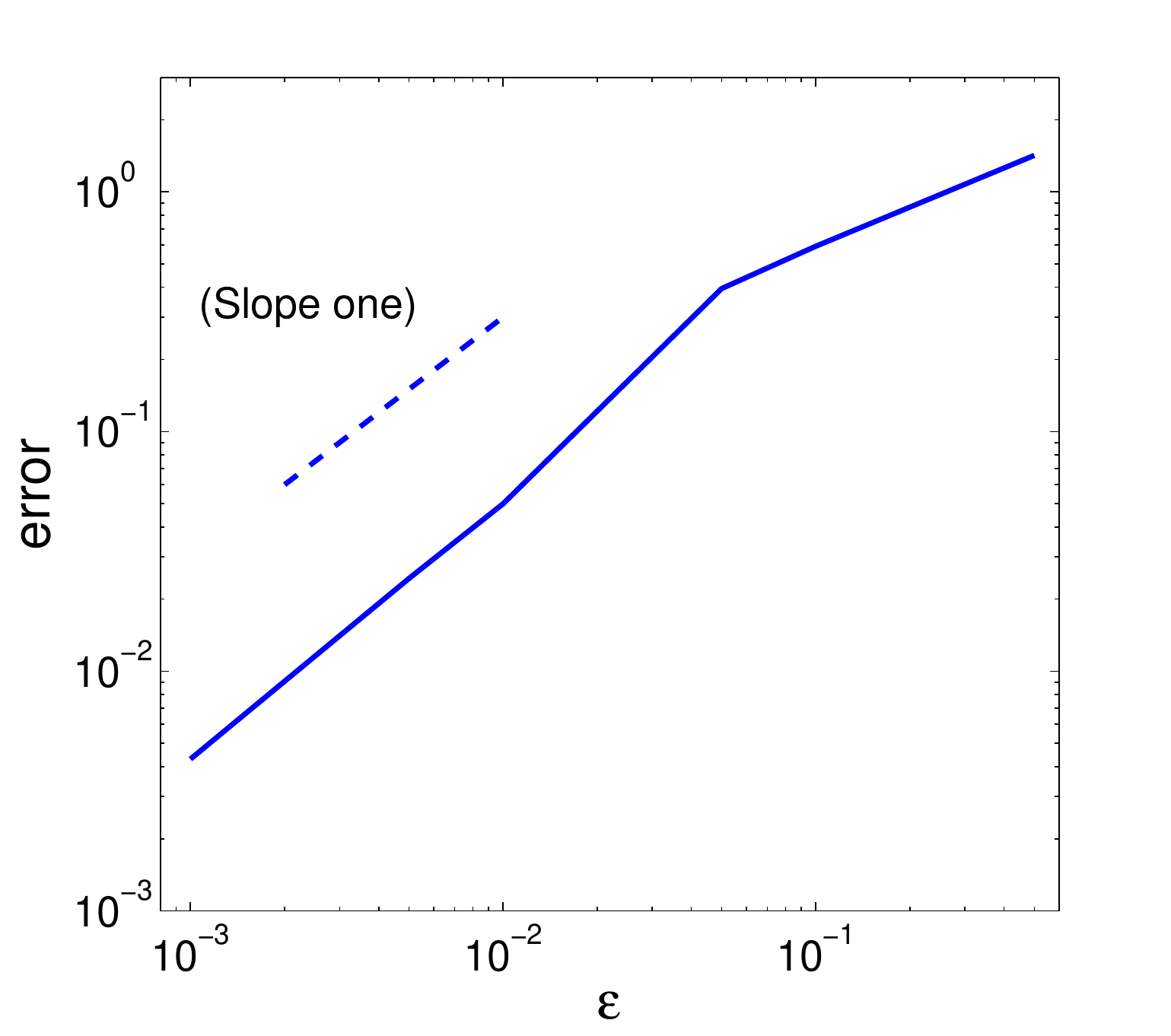,height=5cm,width=6.5cm}&\psfig{figure=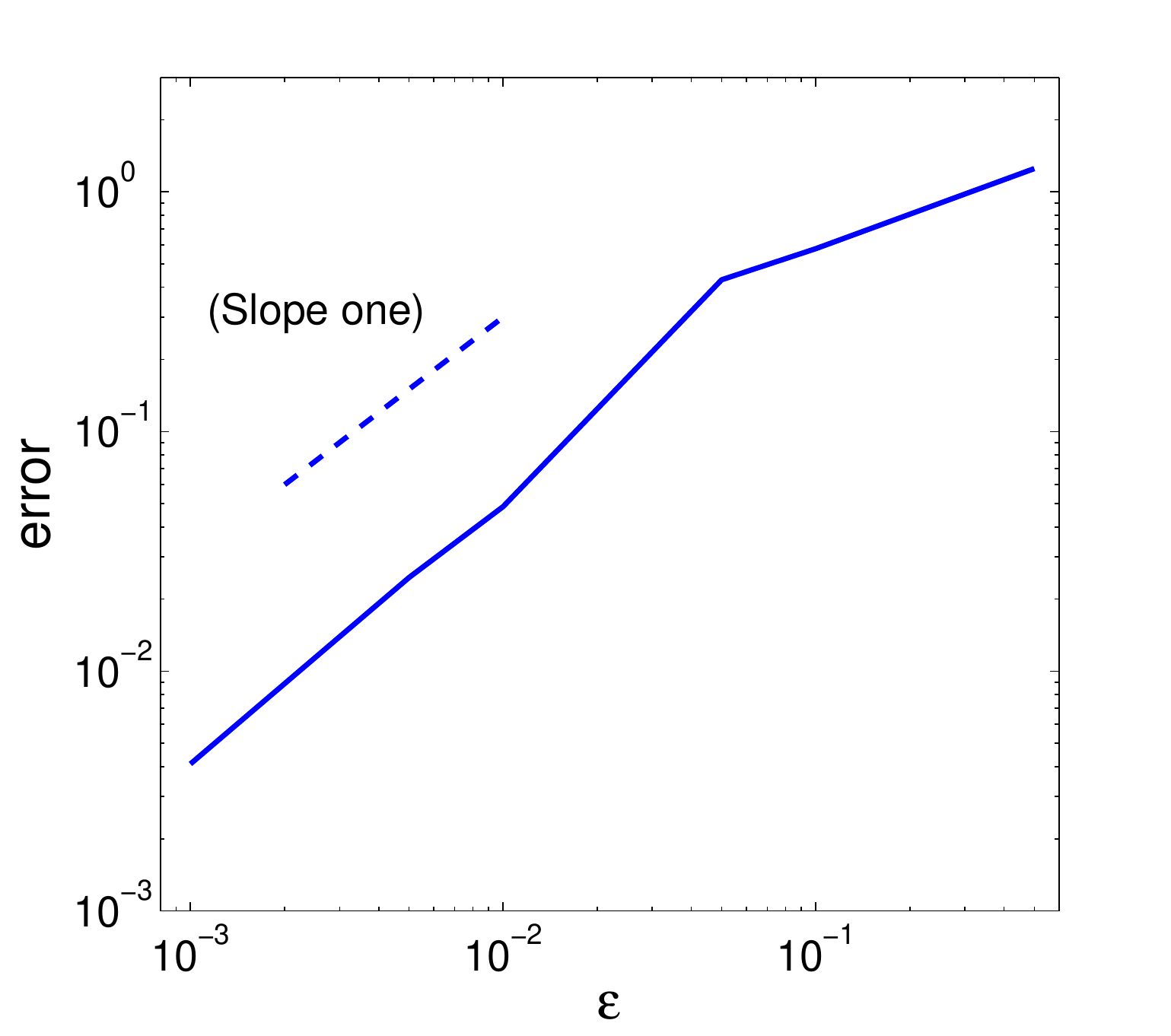,height=5cm,width=6.5cm}\\
\psfig{figure=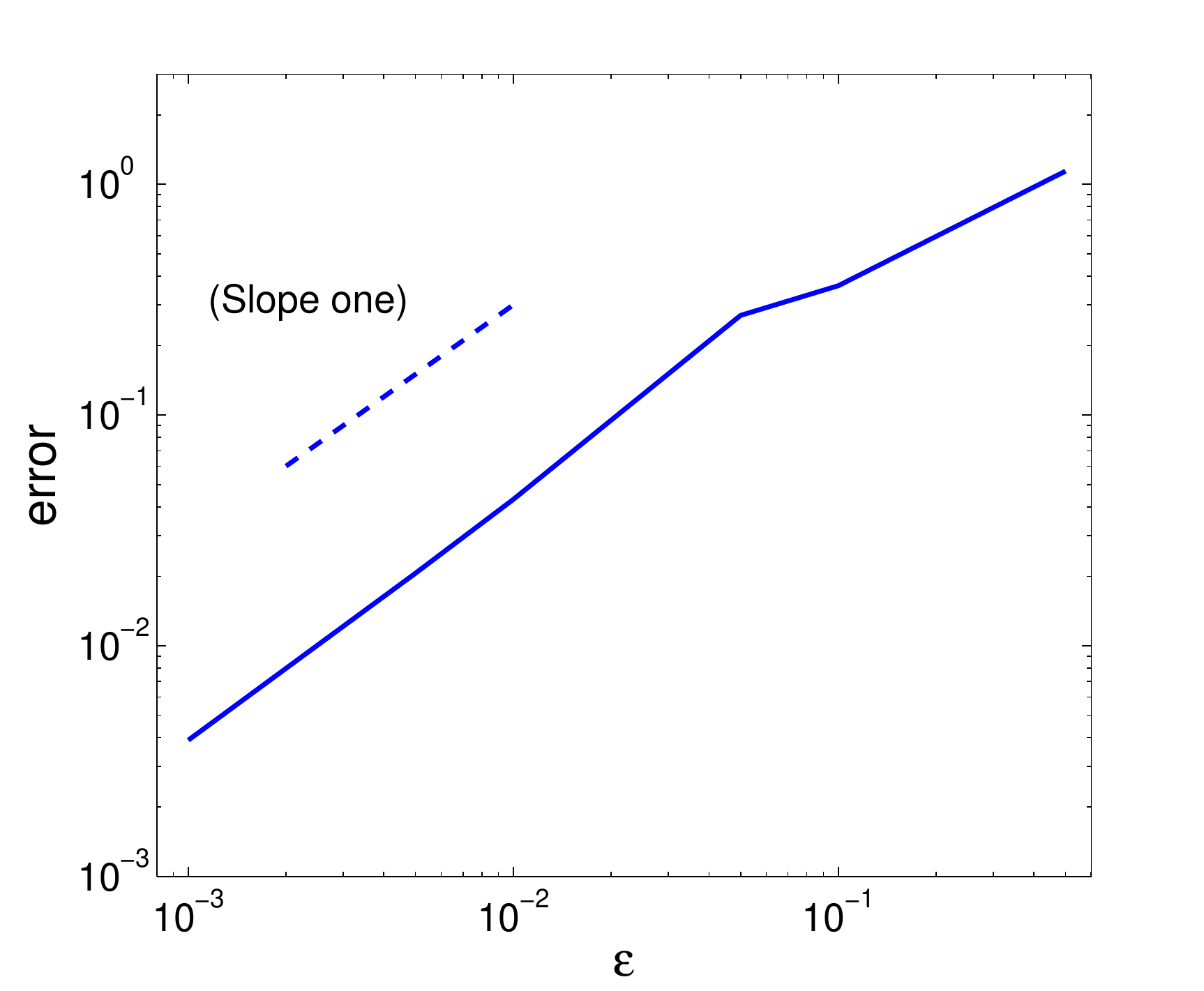,height=5cm,width=6.5cm}&\psfig{figure=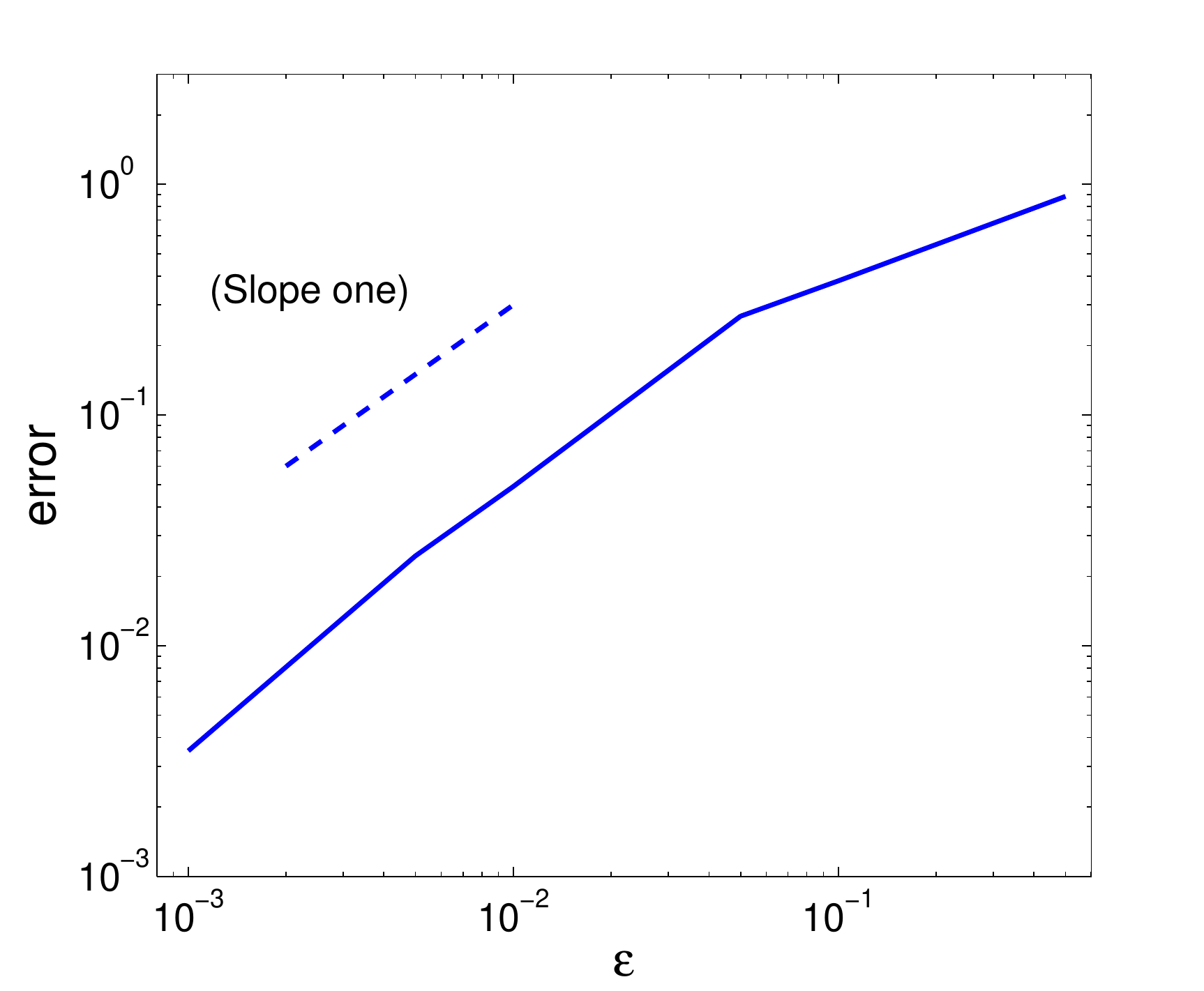,height=5cm,width=6.5cm}\end{array}$$
\caption{Convergence the Vlasov equation (\ref{eq:1}) in the given $E$ case (left) and in the Vlasov-Poisson case (right) to in the limit regime: relative maximum error in $\rho^\eps$ (first row) and $\rho^\eps_\bv$ (second row) at $t_f=1$.}\label{fig:limit}
\end{figure}

\section{Conclusion}
We proposed a multi-scale numerical scheme for the Vlasov equation with a strong non-homogeneous magnetic field by using a Particle-in-Cell strategy. The solution of the problem is highly oscillatory in time, space and velocity with non-periodic oscillation. Making use of the fact that the positions of the particles in this regime are confined around the initial position, we transformed the characteristics into a suitable form which enabled us to perform the separation of scales techniques.
A uniformly accurate first order scheme was then proposed and rigorously analyzed for the Vlasov equation
with external electric field; for this scheme, we also proved that it enjoys the confinement property at the discrete level.
Practical extensions are performed to achieve the second order accuracy, and also to deal with the case
of the Vlasov-Poisson equation. In the later case, it turns out that the characteristics equations
are a huge highly oscillatory system with multiple frequencies. Numerical results are then presented
to confirm the theoretical results and illustrate the efficiency of the proposed schemes.

\appendix
\section{Third order preparation} 
\label{appendix}
In this appendix, we derive the third order initial data, which will ensure (following the same strategy
used in Proposition \ref{prop:1st} that the quantities $\frac{1}{\eps} \partial_s^3 X_k$ and $\partial_s^3 Y_k$ are uniformly bounded.

We first derive (\ref{dthr def a}) with respect to $s$ and
assuming $\partial_s^3\bh_k=O(\eps)$, we find 
\begin{align*}
&\partial_s^2\bh_k(0,\tau)=\tilde{\tilde{\bh}}_k^{1st}(\tau)+O(\eps^2) \;\; \mbox{ with } \tilde{\tilde{\bh}}_k^{1st}(\tau) :=  \frac{\eps}{b_k} A \fe^{\tau J}\tilde{\tilde{{\underline{Y}}}}_k^{0th},
\end{align*}
where we used the following notations
\begin{align*}
&\tilde{\tilde{{\underline{Y}}}}_k^{0th}:=\frac{\nabla_\bx b(\bx_{k,0})\cdot\tilde{\underline{X}}_k^{1st}}{\eps b_k}J\bv_{k,0}+
\frac{1}{\eps}\Pi\left(\frac{b(X_k^{1st})}{b_k}-1\right)J\tilde{\underline{Y}}_k^{0th},\\
&\tilde{\underline{X}}_k^{1st}:=\frac{1}{b_k}\Pi (\fe^{\tau J}\br_k^{1st}), \;\; \tilde{\underline{Y}}_k^{0th}:=\Pi \frac{1}{\eps}\left( \frac{b({X}_k^{1st})}{b_k} - 1\right) J\bv_{k,0},
\end{align*}
with ${X}_k^{1st}$ and $\br_k^{1st}$ given by \eqref{Xfirst} and \eqref{rk1st}. Note that
$\partial_s^2 {\underline{Y}}_k(0)=\tilde{\tilde{\underline{Y}}}_k^{0th}+O(\eps)$,
$\partial_s {\underline{X}}_k(0)=\tilde{\underline{X}}_k^{1st}+O(\eps^2)$ and
$\partial_s \underline{Y}_k(0) = \tilde{\underline{Y}}_k^{0th}+O(\eps)$.
Similarly, deriving (\ref{dthr def b}) with respect to $s$ and assuming $\partial_s^3\br_k=O(\eps)$, we have
\begin{align*}
&\partial_s^2\br_k(0,\tau)=\tilde{\tilde{\br}}_k^{1st}(\tau)+O(\eps^2), \quad
\mbox{with}\quad \\
&\tilde{\tilde{\br}}_k^{1st}(\tau):=\frac{2}{b_k} A\nabla_\bx b(\bx_{k,0})\cdot\left(\tilde{\underline{X}}_k^{1st}+\tilde\bh_k^{1st}\right) J\tilde{\underline{Y}}_k^{0th}+A\left(\frac{b(X_k^{1st})}{b_k}-1\right)J\tilde{\tilde{\underline{Y}}}_k^{0th}\\
&\quad\quad\quad+\frac{1}{b_k} A\nabla_\bx b(\bx_{k,0})\cdot\left(\tilde{\tilde{\underline{X}}}_k^{1st}+\tilde{\tilde{\bh}}_k^{1st}\right) J\bv_{k,0},
\end{align*}
where we defined
\begin{align*}
&\tilde{{\underline{\bh}}}_k^{1st}(\tau):=\frac{\eps}{b_k}A \fe^{\tau J} \tilde{\underline{Y}}_k^{0th}, \;\;\;\;\;\; \tilde{\tilde{\underline{X}}}_k^{1st}:=\frac{1}{b_k}\Pi\fe^{\tau J}\tilde\br_k^{1st}, \\ 
&\tilde{\br}_k^{1st} := A\left( \frac{b(X_k^{1st}}{b_k}-1\right) J\tilde{\underline{Y}}_k^{0th} + \frac{1}{b_k}A\nabla_\bx b(\bx_{k,0})\cdot \tilde{\bh}_k^{1st} J\bv_{k,0} + \frac{\eps}{b_k^2}A\fe^{-\tau J}\partial_t\bE(0, \bx_{k,0}).
\end{align*}
Note that $\ddot{\underline{X}}_k(0)=\tilde{\tilde{\underline{X}}}_k^{1st}+O(\eps^2)$.
Then we update to get
\begin{align*}
&\tilde{\underline{Y}}_k^{1st}=\Pi\left[ \frac{1}{\eps}\left(\frac{b(X_k^{2nd})}{b_k}-1\right)JY_k^{1st}+
\fe^{-J\tau}\frac{\bE^\eps(0, X_k^{1st})}{b_k} \right],\\
&\tilde{\underline{X}}_k^{2nd}=\frac{1}{b_k}\Pi (\fe^{\tau J}\br_k^{2nd}),
\end{align*}
where $X_k^{2nd}$ is given by \eqref{Xsecond} and $\br_k^{2nd}$ by \eqref{r2nd}. Moreover, we define
\begin{align*}
\tilde\bh_k^{2nd}(\tau):=& \frac{\eps}{b_k}A \fe^{\tau J}\left(\tilde{\underline{Y}}_k^{1st}+\tilde\br_k^{1st}\right)-\eps L^{-1}\tilde{\tilde{\bh}}_k^{1st},\\
\tilde\br_k^{2nd}(\tau):=&A\left(\frac{b(X_k^{2nd})}{b_k}-1\right)J\left(\tilde{\underline{Y}}_k^{1st}+\tilde\br_k^{1st}\right)\\
&+\frac{1}{b_k}A\nabla_\bx b(X_{k}^{1st})\cdot\left(\tilde{\underline{X}}_k^{2nd}+\tilde\bh_k^{2nd}\right)JY_{k}^{1st}\\
&+\frac{\eps}{b_k}A\fe^{-J\tau}\left(\frac{\partial_t\bE^\eps(0,X_k^{1st})}{b_k}+\nabla_\bx\bE^\eps(0,X_k^{1st})
(\tilde{\underline{X}}_k^{1st}+\tilde\bh_k^{1st})\right)\\
&-\eps L^{-1}\tilde{\tilde{\br}}_k^{1st}.
\end{align*}
Note that $\partial_s{\underline{X}}_k(0)=\tilde{\underline{X}}_k^{2nd}+O(\eps^3),\quad
\partial_s\bh_k(0,\tau)=\tilde\bh_k^{2nd}(\tau)+O(\eps^3)$ and
$\partial_s{\underline{Y}}_k(0)=\tilde{\underline{Y}}_k^{1st}+O(\eps^2),\quad
\partial_s\br_k(0,\tau)=\tilde\br_k^{2nd}(\tau)+O(\eps^3)$.

Eventually from (\ref{hr def a}), we define iteratively
$$
\bh_k^{3rd}(\tau):=\frac{\eps}{b_k}A\fe^{\tau J}Y_k^{2nd}-\eps L^{-1}\tilde\bh_k^{2nd},
$$
where $Y_k^{2nd}$ is given by \eqref{Ysecond}, so that the third order initial data for the first equation is
\begin{equation}\label{Xthird}
X_k^{3rd}(\tau):=\bx_{k,0}+\bh_k^{3rd}(\tau)-\bh_k^{3rd}(0).
\end{equation}
We can than define
$$
\br_k^{3rd}(\tau):=A\left(\frac{b(X_k^{3rd})}{b_k}-1\right)\fe^{\tau J}Y_k^{2nd}+\frac{\eps}{b_k}
A\fe^{-J\tau}\bE^\eps(0,X_k^{2nd})-\eps L^{-1}\tilde\br_k^{2nd},
$$
so that the third order initial data for the second equation is
\begin{equation}\label{Ythird}
Y_k^{3rd}(\tau):=\bv_{k,0}+\br_k^{3rd}(\tau)-\br_k^{3rd}(0).
\end{equation}
Note that $\bh_k(0,\tau)=\bh_k^{3rd}(\tau)+O(\eps^4)$ and $\br_k(0,\tau)=\br_k^{3rd}(\tau)+O(\eps^4)$.

\begin{proposition}\label{prop:3rd}
Assume (\ref{Lips}) and  $E^\eps(t,\bx)\in {\mathcal C}^4([0,T]\times\bR^2)$ and $b(\bx)\in {\mathcal C}^4(\bR^2)$.
With the third order initial data $X_k(0,\tau)=X_k^{3rd}(\tau)$ given by  \eqref{Xthird}
and $Y_k(0,\tau)=Y_k^{2nd}(\tau)$ given by  \eqref{Ysecond},
the solution of the two-scale system (\ref{charact 2scale}) satisfies
$$
\frac{1}{\eps}\|\partial_s^\ell X_k\|_{L_s^\infty( W^{1,\infty}_\tau)}+\|\partial_s^\ell Y_k\|_{L_s^\infty( W^{1,\infty}_\tau)}\leq C_0, \quad \ell=1,2,3,
$$
for some constant $C_0>0$  independent of $\eps$.
\end{proposition}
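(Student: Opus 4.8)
The plan is to follow the inductive scheme of the proof of Proposition~\ref{prop:1st}, pushing it one derivative further to $\ell=3$; Lemma~\ref{lemma2} will again supply the uniform bounds, while the third-order prepared data constructed in this appendix provide the cancellations needed to keep the initial data of the differentiated systems bounded. First I would differentiate the two-scale system \eqref{charact 2scale} successively in $s$. For each $\ell=1,2,3$ the pair $(\partial_s^\ell X_k,\partial_s^\ell Y_k)$ solves a transport system whose principal part coincides with \eqref{charact 2scale},
\begin{align*}
\partial_s(\partial_s^\ell X_k)+\frac1\eps\partial_\tau(\partial_s^\ell X_k)&=\frac{\fe^{\tau J}}{b_k}\,\partial_s^\ell Y_k,\\
\partial_s(\partial_s^\ell Y_k)+\frac1\eps\partial_\tau(\partial_s^\ell Y_k)&=\frac1\eps\Big(\frac{b(X_k)}{b_k}-1\Big)J\,\partial_s^\ell Y_k+G_k^{(\ell)},
\end{align*}
where $G_k^{(\ell)}$ gathers all terms in which at least one $s$-derivative hits the coefficients $b(X_k)$ or $\bE^\eps(s/b_k,X_k)$, and therefore involves only the lower derivatives $\partial_s^jX_k,\partial_s^jY_k$ with $j<\ell$ and $\mathcal C^4$-coefficients (the regularity assumed here is exactly what allows differentiating the source three times). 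Crucially, the singular-looking contributions in $G_k^{(\ell)}$, typically $\tfrac{1}{\eps b_k}\nabla_\bx b(X_k)\cdot\partial_s^j X_k\,JY_k$, are in fact $O(1)$, because each $\partial_s^jX_k=O(\eps)$ by the order-$j$ estimate and $\tfrac1\eps(b(X_k)/b_k-1)$ together with its $\tau$-derivative is $O(1)$ by Lemma~\ref{lemma21}. Introducing the filtered unknowns $\tfrac1\eps\partial_s^\ell X_k(s,\tfrac s\eps+\tau)$ and $\tfrac{\fe^{\tau J}}{b_k}\partial_s^\ell Y_k(s,\tfrac s\eps+\tau)$, exactly as in Proposition~\ref{prop:1st}, recasts each system into the abstract form of Lemma~\ref{lemma2} with $\mathcal H=L^\infty_\tau([0,2\pi])$, whose hypotheses on $\alpha^\eps,\beta^\eps,\gamma^\eps$ are met as soon as the bounds of orders $j<\ell$ are in hand. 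This sets up a clean induction on $\ell$.

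\textbf{Control of the initial data.} The heart of the proof, as in Proposition~\ref{prop:1st}(ii), is to establish that the data of these systems stay uniformly bounded,
$$\frac1\eps\,\partial_s^\ell X_k(0,\tau)=O(1),\qquad \partial_s^\ell Y_k(0,\tau)=O(1),\qquad \ell=1,2,3,$$
together with their $\tau$-derivatives. For $\ell=1,2$ the computations of Proposition~\ref{prop:1st} carry over unchanged, since $X_k^{3rd}=X_k^{2nd}+O(\eps^3)$ and $Y_k^{2nd}=Y_k^{1st}+O(\eps^2)$ differ from the data used there only at orders that do not affect the $O(1)$/$O(\eps)$ nature of those initial quantities. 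For $\ell=3$ I would evaluate the differentiated $Y$-equation at $s=0$ and use \eqref{charact 2scale} repeatedly to trade each $\partial_s$ for $-\tfrac1\eps\partial_\tau$ plus source, exactly as in the computation that produced $\tfrac1\eps\partial_s^2X_k(0,\tau)=O(1)$. A priori $\partial_s^3 Y_k(0,\tau)$ carries contributions of size $\eps^{-2}$ and $\eps^{-1}$ issuing from the repeated action of the singular factor $\tfrac1\eps(b(X_k)/b_k-1)$; the entire purpose of the expansion above is that the auxiliary averages $\tilde{\underline X}_k^{1st},\tilde{\underline Y}_k^{0th},\tilde{\tilde{\underline Y}}_k^{0th}$ and the layers $\bh_k^{3rd},\br_k^{3rd}$ defining $(X_k^{3rd},Y_k^{2nd})$ are arranged so that $\bh_k(0,\tau)=\bh_k^{3rd}(\tau)+O(\eps^4)$ and $\br_k(0,\tau)=\br_k^{2nd}(\tau)+O(\eps^3)$. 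This matching forces the $\eps^{-2}$ and $\eps^{-1}$ pieces to cancel, leaving $\partial_s^3 Y_k(0,\tau)=O(1)$; the $X$-equation then yields $\tfrac1\eps\partial_s^3 X_k(0,\tau)=O(1)$ in the same way.

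Once the data are controlled, Lemma~\ref{lemma2} delivers the $L^\infty_\tau$ estimates for $\tfrac1\eps\partial_s^\ell X_k$ and $\partial_s^\ell Y_k$, $\ell=1,2,3$. Differentiating each abstract system once more in $\tau$ reproduces a system of the same type, and checking that the $\tau$-derivatives of the initial data are again $O(1)$ (using in particular $\partial_\tau X_k^{3rd}=O(\eps)$, which holds since $\bh_k^{3rd}=O(\eps)$) upgrades the bounds to the full $W^{1,\infty}_\tau$ estimates in the statement. I expect the genuine obstacle to be the bookkeeping of the $\eps^{-2}$ and $\eps^{-1}$ cancellations inside $\partial_s^3 Y_k(0,\tau)$: since the singular coefficient is now differentiated twice, one must follow the interplay of the $\Pi$- and $(I-\Pi)$-projected pieces through two successive applications of $A=L^{-1}(I-\Pi)$, and it is precisely the tailored choice of $Y_k^{2nd}$ (through $\br_k^{2nd}$ and $\mathcal T^{2nd}$) together with $X_k^{3rd}$ that renders these cancellations exact rather than merely $O(\eps)$.
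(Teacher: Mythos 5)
Your proposal is correct and takes essentially the same approach as the paper: the paper's own proof of Proposition \ref{prop:3rd} consists solely of the remark that it is a recursive repetition of the argument of Proposition \ref{prop:1st}, and your reconstruction (differentiate the two-scale system in $s$, pass to the filtered unknowns $\frac{1}{\eps}\partial_s^\ell X_k(s,\tfrac{s}{\eps}+\tau)$ and $\frac{\fe^{\tau J}}{b_k}\partial_s^\ell Y_k(s,\tfrac{s}{\eps}+\tau)$, verify that the prepared data keep the initial values of the differentiated systems uniformly bounded, and invoke Lemma \ref{lemma2} with $\mathcal{H}=L^\infty_\tau$, once more after a $\tau$-differentiation for the $W^{1,\infty}_\tau$ bound) is exactly that process pushed to $\ell=3$. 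The only cosmetic slip is naming $\br_k^{3rd}$ as part of what defines $Y_k^{2nd}$ (it is $\br_k^{2nd}$ that does), but the matching orders you actually use, $\bh_k(0,\tau)=\bh_k^{3rd}(\tau)+O(\eps^4)$ and $\br_k(0,\tau)=\br_k^{2nd}(\tau)+O(\eps^3)$, are the correct ones for the data $(X_k^{3rd},Y_k^{2nd})$.
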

\begin{proof}
The proof is a recursive process of the known results and it is very similar to that of Proposition \ref{prop:1st}. We omit the details here for brevity.
\end{proof}

\section{Limit model} 
\label{appendix2}
In this appendix, we derive limit mode at the characteristics level. 
From \eqref{eq:X}-\eqref{eq:Y}, we derive the averaged model by considering $\eps<\!\!<1$.  Indeed, from the equations on
$\bh_k$ and $\br_k$, we get
\begin{eqnarray*}
\bh_k &=& -\frac{\eps}{b_k}J\fe^{\tau J}\underline{Y}_k + O(\eps^2), \nonumber\\
\br_k &=& -\frac{\eps}{b_k^2} (\fe^{\tau J}\underline{Y}_k\cdot \nabla_\bx b(\underline{X}_k) J\underline{Y}_k + \frac{\eps}{b_k}J\fe^{-\tau J} \bE^\eps(s/b_k, \underline{X}_k) + O(\eps^2).
\end{eqnarray*}
Then, injecting in the macro equations on $\underline{X}_k$ and $\underline{Y}_k$, we obtain
\begin{eqnarray*}
\partial_s \underline{X}_k &=& -\frac{\eps}{b_k^3} \Pi\left[\fe^{\tau J} (\fe^{\tau J}\underline{Y}_k\cdot \nabla_\bx b(\underline{X}_k))  J\underline{Y}_k\right] + \frac{\eps}{b_k^2}  \Pi\left[  \fe^{\tau J} J \fe^{-\tau J} \bE^\eps(s/b_k, \underline{X}_k)  \right]+ O(\eps^2), \nonumber\\
\partial_s \underline{Y}_k &=& \frac{1}{\eps}\Pi\left[  B_k(\underline{X}_k+\bh_k) J\underline{Y}_k\right] + \Pi \left[ \fe^{-\tau J}\frac{1}{b_k} \bE^\eps(s/b_k, \underline{X}_k)  \right] + O(\eps),
\end{eqnarray*}
where we defined $B_k(X)=b(X)/b(\bx_{k,0}) -1$. After some computations, it comes
\begin{eqnarray*}
\partial_s \underline{X}_k &=& -\frac{\eps}{2b_k^3} J \nabla_\bx b(\underline{X}_k) \;  |\underline{Y}_k|^2 +  \frac{\eps}{b_k^2}  J\bE^\eps(s/b_k, \underline{X}_k) + O(\eps^2), \nonumber\\
\partial_s \underline{Y}_k &=&  \frac{1}{\eps} B_k(\underline{X}_k) J\underline{Y}_k + O(\eps), \nonumber\\
&=& \frac{1}{\eps b_k}(b(\underline{X}_k) - b_k) J\underline{Y}_k + O(\eps).
\end{eqnarray*}
Under the fact that $b(\bx_k(t))=b_k+O(\eps)$, the limit model above is consistent with the one derived in
\cite{degond}.

\section*{Acknowledgements}
This work is supported by the French ANR project MOONRISE ANR-14-CE23-0007-01.
N. Crouseilles and M. Lemou are supported by the Enabling Research EUROFusion project
CfP-WP14-ER-01/IPP-03.
X. Zhao is supported by the IPL FRATRES.

\bibliographystyle{model1-num-names}

\end{document}